\documentclass[11pt,reqno]{amsart}
\usepackage[T1]{fontenc}
\usepackage[utf8]{inputenc}
\usepackage{lmodern}
\usepackage[a4paper,textwidth=150mm,textheight=240mm]{geometry}
\usepackage{amsmath,amssymb,amsthm,mathtools}
\usepackage{enumitem}
\usepackage{needspace}
\usepackage{booktabs,array,mathrsfs}
\usepackage{microtype}
\usepackage[hidelinks]{hyperref}
\hypersetup{
 pdftitle={Selection Spectra and Sharp Perturbations of Normal Sequences},
 pdfauthor={Wieslaw Sliwa},
 pdfsubject={Selection spectra and perturbations of normal sequences},
 pdfkeywords={strongly normal sequences, selection spectra, Lorentz sequence spaces, symmetric sequence ideals, perturbations}
}
\setlist[enumerate]{label=\textup{(\arabic*)},leftmargin=*,itemsep=3pt,topsep=6pt}
\newtheorem{theorem}{Theorem}[section]
\newtheorem{proposition}[theorem]{Proposition}
\newtheorem{lemma}[theorem]{Lemma}
\newtheorem{corollary}[theorem]{Corollary}
\theoremstyle{remark}
\newtheorem{remark}[theorem]{Remark}

\makeatletter
\def\@begintheorem#1#2[#3]{%
  \deferred@thm@head{\the\thm@headfont \thm@indent
    \@ifempty{#1}{\let\thmname\@gobble}{\let\thmname\@iden}%
    \@ifempty{#2}{\let\thmnumber\@gobble}{\let\thmnumber\@iden}%
    \let\thmnote\@gobble
    \thm@swap\swappedhead\thmhead{#1}{#2}{#3}%
    \the\thm@headpunct
    \thmheadnl
    \hskip\thm@headsep
  }%
  \ignorespaces}
\makeatother
\numberwithin{equation}{section}
\newcommand{\N}{\mathbb N}
\newcommand{\R}{\mathbb R}
\newcommand{\cP}{\mathcal P}
\newcommand{\cS}{\mathcal S}
\newcommand{\cB}{\mathcal B}
\newcommand{\lin}{\operatorname{span}}

\title[Selection spectra and perturbations]
{Selection spectra and sharp perturbations of normal sequences}
\author{Wies\l aw \'{S}liwa}
\address{Institute of Mathematics, Faculty of Exact and Technical Sciences,
University of Rzesz\'ow, Pigonia 1, 35-310 Rzesz\'ow, Poland}
\email{wsliwa@ur.edu.pl}
\date{3 November 2026}
\subjclass[2020]{Primary 46B10, 46B15; Secondary 46B26, 46E30}
\keywords{strongly normal sequence, summability spectrum, subsequence,
block sequence, Lorentz sequence space, symmetric sequence ideal, perturbation}

\begin{document}
\begin{abstract}
For a bounded sequence $y=(y_n)\subset E^*$ and $p>0$, put
$G_p(y)=\{x\in E:(y_n(x))\in\ell_p\}$ and call $y$ $p$-strongly normal
when $G_p(y)$ is dense in $E$. We study the density spectra of the original
sequence, its subsequences, and its normalized successive blocks.
The possible subsequence and block spectra are exactly
\[
 \varnothing,\quad (0,\infty),\quad (1,\infty),\quad
 [r,\infty),\quad(r,\infty)\qquad(r>1),
\]
and their possible pairs are precisely the nested pairs. Every such pair
has a normalized weakly null $1$-unconditional basic realization in
$\ell_1(\mathfrak c)^*$, even with empty original-sequence spectrum.
Without a basicness requirement, the possible original-sequence spectra
are all upper subsets of $(0,\infty)$; each is realized in a separable
Hilbert space.

We determine the exact universal perturbation classes: $\ell_p$ for a fixed
original-sequence exponent, $c_0$ for the subsequence spectrum, and $\ell_1$
for the block spectrum. The ideal-valued version holds for every proper
solid permutation-invariant linear ideal $c_{00}\subset A\subsetneq c_0$,
without a norm or completeness assumption.

For the author's 2012 selection model, the bounded-run criterion extends
to all proper symmetric quasi-Banach sequence ideals. In the unbounded-run
case, a single norm-one projection annihilates the summability domains
of any family of fewer than $\mathfrak b$ such ideals; the quotient by their
closed linear span contains a $1$-complemented isometric copy of
$\ell_\infty$. Such copies also occur in the left jump at every $q>1$
and in the right jump at every $q\ge1$ of the scale of closed summability
domains. Thus the common density criterion coexists with two-sided
complemented discontinuities.
\end{abstract}
\maketitle

\section{Introduction}
A normalized sequence $(y_n)$ in the dual $E^*$ of a Banach space is
\emph{normal} when it is weak-star null. Following
\cite{sliwa1996,sliwa2012}, it is \emph{strongly normal} when
\[
 G_1(y)=\Bigl\{x\in E:\sum_n|y_n(x)|<\infty\Bigr\}
\]
is dense in $E$. A Banach space has an infinite-dimensional separable
quotient if and only if its dual contains a strongly normal sequence
\cite{sliwa2012}; see \cite{mujica1997,survey2018} for background. The
Josefson--Nissenzweig theorem \cite{josefson,nissenzweig} supplies a normal
sequence in the dual of every infinite-dimensional Banach space. Here we
study density of the domains
\[
 G_p(y)=\{x\in E:(y_n(x))\in\ell_p\},\qquad 0<p<\infty,
\]
and its behaviour under selection and perturbation, rather than propose
a new solution of the general separable quotient problem.

The distinction between density and all-point summability is essential.
For $p\ge1$, $G_p(y)=E$ is the classical weak-star $p$-summability
condition \cite{fourie}. Density is strictly weaker: the coordinate
functionals on $E=\ell_2$ satisfy
$G_1(y)=\ell_1\subsetneq\ell_2$, but $G_1(y)$ is dense in $\ell_2$.
We write $\cP(y)$ for the exponents at which the original sequence has
a dense domain, $\cS(y)$ for those obtained by subsequences, and
$\cB(y)$ for those obtained by normalized successive blocks of a
Schauder basic sequence. Thus $\cP$ concerns the original sequence,
$\cS$ subsequences, and $\cB$ blocks; always
$\cP\subseteq\cS\subseteq\cB$ when the last spectrum is defined.

\paragraph{Scope.}
The classification is over all Banach duals, not inside every fixed
predual. By Corollary~\ref{pre:small-density}, if
$\operatorname{dens}E<\mathfrak b$, then every normal sequence in $E^*$
has $\cS=(0,\infty)$, and every normal basic sequence has
$\cB=(0,\infty)$. Proper \emph{subsequence and block} spectra of normal
sequences are therefore large-density phenomena. This restriction does
not apply to $\cP$: Theorem~\ref{orig:classification} realizes every upper
subset of $(0,\infty)$ as an original-sequence spectrum over a separable
Hilbert predual, without requiring basicness. The complete joint
classification below concerns $(\cS,\cB)$; it does not classify all
triples $(\cP,\cS,\cB)$ for one basic sequence.

\paragraph{Contributions and prerequisites.}
The following map separates quoted inputs, auxiliary arguments reproduced
for completeness, and the results established here.
\begin{center}
\small
\renewcommand{\arraystretch}{1.08}
\begin{tabular}{@{}p{.25\textwidth}p{.70\textwidth}@{}}
\toprule
Status & Results and role\\
\midrule
Classical inputs & The selection theorem \eqref{pre:selection} is
\cite[Theorem~1]{sliwa2012}. The Lorentz block facts are
\cite[Remark~1.4 and Proposition~1.8]{lorentzideals}.\\[4pt]
Reproved auxiliary tools & Lemmas~\ref{pre:allp}--\ref{pre:nostrong}
and \ref{pre:defect} give the finite-support, reflexive-span,
canonical-bidual, and diagonal tools; their overlap with earlier work
is specified below.\\[4pt]
Spectral results & Theorems~\ref{lor:weight}, \ref{lor:classification}
and \ref{joint:classification} give the hereditary weight criterion and
the individual and joint selection classifications. Theorems
\ref{lor:open-space} and \ref{orig:classification} give spaceable open
thresholds and the separate classification of $\cP$ without basicness.\\[4pt]
Sharp perturbations & Theorem~\ref{pert:thresholds},
Corollary~\ref{pert:wholeP}, and Theorem~\ref{ideal:perturb} determine the
exact universal error classes, with explicit counterexamples outside them.\\[4pt]
Extensions of the 2012 model & Section~\ref{sec:jmsj} extends the
exponent-one run-length criterion to quasi-Banach ideals.
Theorems~\ref{jmsj:common-projection} and \ref{jmsj:strict-scale} give
common complemented obstructions and two-sided jumps of closed domains.\\[4pt]
Common realization & Corollary~\ref{fixed:joint} places every admissible
pair in the single dual $\ell_1(\mathfrak c)^*$, with a normalized weakly
null $1$-unconditional realization and, if desired, $\cP=\varnothing$.\\
\bottomrule
\end{tabular}
\end{center}

\paragraph{Relation to earlier work.}
The finite-support argument of Lemma~\ref{pre:allp} already appears in
\cite[proof of Proposition~2]{sliwa2012}; we record its simultaneous-exponent
and quotient consequences. For the companion preprint
\cite{sliwa2026}, references here are to arXiv:2610.12787v1.
The coefficient-vector argument of Lemma~\ref{pre:allp} also occurs in
its proof of Theorem~9, $(2)\Rightarrow(3)$. Lemma~\ref{pre:reflexive}
uses the weak-star closedness and quotient identification of its
Proposition~10, Steps~1--2, and records the basis consequence.
Lemma~\ref{pre:defect} uses the diagonal method of its Theorem~9,
$(3)\Rightarrow(1)$, now for a quotient by a closed union of summability
domains. By contrast, Lemma~\ref{pre:nostrong} uses separability of
$[z_n]^*$ in a canonical bidual; it is not a restatement of the
Grothendieck-specific obstruction in its Theorem~26.
Lemmas~\ref{pre:allp}, \ref{pre:reflexive}, \ref{pre:nostrong}, and
\ref{pre:defect} are auxiliary tools reproduced for completeness; no
novelty is claimed for these lemmas. Their relation to earlier arguments
is specified above, and all four are proved below. Neither the companion
paper's reflexivity characterization nor its $L_1$ realizations are
used as logical inputs or claimed anew here.

The negative existence result of \cite{kakol2026,sliwa2026} is also not
claimed as new: the empty-spectrum case of Corollary~\ref{fixed:joint}
recovers the relevant phenomenon in $\ell_1(\mathfrak c)^*$. The
additional content is the full finite-exponent and block picture, all
nested pairs, and the sharp perturbation and closed-domain results.
Likewise, Section~\ref{sec:jmsj} starts from the model and exponent-one
criterion of \cite{sliwa2012}; its simultaneous quasi-Banach-ideal and
complemented-geometry conclusions are the extensions proved here.

\paragraph{Proof route.}
The exponent-one selection theorem and Lemma~\ref{pre:allp} force either
selection spectrum to collapse to $(0,\infty)$ if it meets $(0,1]$.
Upper-set monotonicity then gives the list of possible shapes.
For realization, Theorem~\ref{lor:weight} computes the allowed exponents
from a Lorentz weight and excludes the forbidden union for every
normalized sequence in the relevant span. This proves
Theorem~\ref{lor:classification}. Theorem~\ref{pert:thresholds} supplies
the sharp error classes: $\ell_p$ for a fixed original exponent,
$c_0$ for $\cS$, and $\ell_1$ for $\cB$. Two Lorentz models and an
$\ell_1$-saturated mixed construction then yield
Theorem~\ref{joint:classification}; metric-quotient transfer yields
Corollary~\ref{fixed:joint}.
The spaceability, unrestricted $\cP$, and run-length constructions are
separate branches, not prerequisites for that pair classification.
Theorem~\ref{ideal:perturb} generalizes the fixed-exponent perturbation
argument; the earlier explicit Hilbert-space proof is retained as a
concrete paired-coordinate model for the three-part sharpness theorem.

\paragraph{Related context.}
Exponent sets also occur in the Banach--Saks theory \cite{banachsaks},
but there the quantifiers concern sequences throughout a space; our
invariants belong to one fixed sequence of functionals. Work on
$p$-summability and weak maximizing properties \cite{blanco}, and on
symmetric sequence ideals \cite{defant,pietsch}, provides related but
different frameworks. For adjacent Josefson--Nissenzweig questions in
function spaces, see \cite{banakhjnp} for $C_p$-spaces and
\cite{kakolsobotazdomskyy} for Grothendieck-type properties of $C(K)$.
These are contextual references, not sources of the spectral
classification proved here.

Weak-star nullity is part of normality; for a bounded sequence it also
follows from density of a nonempty union of summability domains
(Lemma~\ref{pre:elementary}(ii)). Basicness is required for $\cB$, not
for the definition of $\cP$ or $\cS$. Throughout, a dense $G_p$-domain
must be distinguished from the stronger identity $G_p=E$.
Section~\ref{sec:questions} records the remaining classification and
cardinal questions and a short interleaving example separating
$\cP$ from $\cS$ for a basic sequence.

\section{Summability domains and selection principles}\label{sec:prelim}

All spaces and scalars are real. We write \(B_E\) and \(S(E)\) for the
closed unit ball and the unit sphere, respectively, \([x_n]\) for the
norm-closed linear span of a sequence, and
\(J_E:E\to E^{**}\) for the canonical embedding. Structural
subspaces are norm closed; algebraic spans and summability domains
need not be. A Banach space is \emph{hereditarily $\ell_1$}, or equivalently
\emph{$\ell_1$-saturated}, if each of its infinite-dimensional subspaces
contains an isomorphic copy of $\ell_1$.
We write $f\le^*g$ for eventual domination in $\N^\N$ and denote by
$\mathfrak b$ the least cardinality of a family in $\N^\N$ which is
unbounded for $\le^*$.

For a bounded sequence \(y=(y_n)\subset E^*\) and \(0<q<\infty\), put
\[
 G_q(y)=\{x\in E:(y_n(x))\in\ell_q\}.
\]
We call \(y\) \emph{\(q\)-strongly normal} if \(G_q(y)\) is dense in \(E\).
A \emph{normal} sequence is a normalized weak-star null sequence.
We shall also work with seminormalized sequences, for which
\(0<\inf_n\|y_n\|\le\sup_n\|y_n\|<\infty\).
For \(q<1\), only membership in the usual quasi-Banach sequence space
\(\ell_q\) is involved in the definition.

The \emph{summability spectrum} and the \emph{subsequence spectrum} are
\[
\begin{split}
 \cP(y)&=\{q>0:G_q(y)\text{ is dense in }E\},\\
 \cS(y)&=\{q>0:\text{some subsequence of \(y\) is \(q\)-strongly normal}\}.
\end{split}
\]
For a normalized Schauder basic sequence \(y\), we also define
\[
 \cB(y)=\{q>0:\text{\(y\) has a normalized \(q\)-strongly normal
 block sequence}\}.
\]
Blocks are finite linear combinations over successive disjoint finite
intervals; zero coefficients and singleton blocks are allowed.
We also use the phrase \emph{successive blocks} for such finite
combinations of an arbitrary sequence, without asserting basicness.
The block spectrum \(\cB\) is defined only for Schauder basic
sequences.  Thus all three symbols record density properties; the stronger
all-point condition is always written explicitly as $G_q(y)=E$.
For a nonempty set \(A\subset(0,\infty)\), set
\[
 D_A(y)=\bigcup_{q\in A}G_q(y).
\]
In particular,
\[
 D_{<p}(y)=\bigcup_{0<q<p}G_q(y),\qquad
 D_{<\infty}(y)=\bigcup_{q>0}G_q(y).
\]
Each of these sets is a linear subspace: the domains \(G_q(y)\) are
linear and increase with \(q\). The assertion that \(D_A(y)\) is
nondense is stronger than the separate assertions that \(G_q(y)\)
is nondense for every \(q\in A\).

\begin{lemma}\label{pre:elementary}
Let \(y\subset E^*\) be bounded.
\begin{enumerate}[label=\textup{(\roman*)}]
\item If \(0<q<t<\infty\), then \(G_q(y)\subset G_t(y)\).
Consequently, each of \(\cP(y)\), \(\cS(y)\), and, when defined,
\(\cB(y)\) is an upper subset of \((0,\infty)\).
\item If \(D_A(y)\) is dense for some nonempty \(A\subset(0,\infty)\),
then \(y_n\to0\) weak-star.
\item If \(0<c\le|\lambda_n|\le C<\infty\), then
\(G_q((\lambda_ny_n))=G_q(y)\) for every fixed \(q>0\).
\item For every normalized normal sequence, \(\cP(y)\subseteq\cS(y)\); when \(y\) is Schauder basic, \(\cS(y)\subseteq\cB(y)\).
\end{enumerate}
\end{lemma}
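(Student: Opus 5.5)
The four parts are elementary, and I would prove them in order, each directly from the definitions.

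For (i), I will use that $\ell_q\subset\ell_t$ when $q<t$: a sequence in $\ell_q$ tends to $0$, so $|a_n|\le1$ eventually, and then $|a_n|^t\le|a_n|^q$. Hence $G_q(y)\subset G_t(y)$. Upward closure of $\cP$ follows at once, since a superset of a dense set is dense. For $\cS$, the same inclusion applied to the witnessing subsequence shows that a $q$-strongly normal subsequence is also $t$-strongly normal. For $\cB$, the same witnessing normalized block sequence serves for $t$.

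For (ii), I will first note that $D_A(y)=\bigcup_{q\in A}G_q(y)$ is a linear subspace, because the $G_q$ are linear and nested by (i). If $x\in D_A(y)$, then $(y_n(x))\in\ell_q$ for some $q$, so $y_n(x)\to0$. This gives a dense set $D\subset E$ on which $y_n\to0$ pointwise. For arbitrary $x\in E$ and $\varepsilon>0$, I pick $z\in D$ with $\|x-z\|<\varepsilon$ and set $M=\sup_n\|y_n\|$. Then
\[
|y_n(x)|\le M\varepsilon+|y_n(z)|,
\]
so $\limsup_n|y_n(x)|\le M\varepsilon$. Since $\varepsilon$ is arbitrary, $y_n(x)\to0$. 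This $\varepsilon/3$-type argument is the only step requiring boundedness of $y$.

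For (iii), I will use the pointwise comparison $c|y_n(x)|\le|\lambda_ny_n(x)|\le C|y_n(x)|$. Membership in $\ell_q$ is preserved under multiplication by a bounded sequence and under division by a sequence bounded away from $0$. Hence the two domains $G_q((\lambda_ny_n))$ and $G_q(y)$ coincide for every $q>0$, and this holds also for $q<1$, since only membership is involved.

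For (iv), I will argue by trivial witnesses. If $q\in\cP(y)$, then $y$ itself, viewed as the trivial subsequence, is $q$-strongly normal, so $q\in\cS(y)$. If $y$ is basic and $(y_{n_k})$ is $q$-strongly normal, then $(y_{n_k})$ is a normalized block sequence of $y$ with singleton blocks and coefficients $1$. Singleton blocks are allowed by the definition, so $q\in\cB(y)$. I do not expect any genuine obstacle here. The only points needing care are that upward closure in (i) is applied to witnesses rather than to the spectra themselves, and that in (ii) the pointwise convergence on the dense set is extended to all of $E$ via the uniform bound.
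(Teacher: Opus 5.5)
Your proposal is correct and follows the paper's proof essentially step for step. The matching steps are: the inclusion $\ell_q\subset\ell_t$, applied to witnesses, for (i); approximation from the dense domain giving $\limsup_n|y_n(x)|\le M\|x-u\|$ for (ii); bounded coordinatewise multiplication for (iii), where you also spell out the reverse direction via division by $|\lambda_n|\ge c$, which the paper leaves implicit; and the trivial-subsequence and singleton-block witnesses for (iv).
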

\begin{proof}
The first assertion follows from \(\ell_q\subset\ell_t\). Every scalar
sequence obtained by evaluating \(y\) on \(D_A(y)\) tends to zero.
If \(M=\sup_n\|y_n\|\), approximation of \(x\in E\) by
\(u\in D_A(y)\) gives
\[
 \limsup_n|y_n(x)|\le M\|x-u\|,
\]
which proves (ii). Finally, multiplication by \((\lambda_n)\)
preserves membership in each fixed \(\ell_q\).  For (iv), the original
sequence is an allowed subsequence and every subsequence is a block sequence
with singleton supports.
\end{proof}

When \(q\ge1\), the stronger condition \(G_q(y)=E\) is the usual
weak-star \(q\)-summability condition; see \cite{fourie}.
The closed graph theorem then gives a bounded evaluation operator
\begin{equation}
S_y:E\longrightarrow\ell_q,\qquad S_yx=(y_n(x)).
 \label{pre:operator}
\end{equation}
Indeed, convergence in \(E\) and in \(\ell_q\) identifies every
coordinate of a possible graph limit. For \(q>1\), duality yields
\begin{equation}
\left\|\sum_n a_ny_n\right\|
 \le \|S_y\|\,\|(a_n)\|_{q'}\qquad(a\in c_{00}),
 \quad q'=\frac q{q-1}.
 \label{pre:upper}
\end{equation}
Conversely, such an estimate implies \(G_q(y)=E\).
There need be no bounded operator \(E\to\ell_q\) when only
\(\overline{G_q(y)}=E\) is assumed.

\subsection{Exponent one: cited background}
Following \cite{sliwa2012}, a sequence in $E^*$ is called
\emph{weak-star Schauder basic} if it is a Schauder basis of its
$\sigma(E^*,E)$-closed linear span.  In particular, its coefficient
functionals are continuous for the induced weak-star topology.  The
classical framework of weak-star basic sequences goes back to
Johnson--Rosenthal \cite{johnsonrosenthal}.  The one external
exponent-one selection result used below is the following.

\begin{equation}\label{pre:selection}
\begin{aligned}
&\text{Every normalized strongly normal sequence in $E^*$ has a}\\
&\text{weak-star Schauder basic subsequence.}
\end{aligned}
\end{equation}
This is exactly \cite[Theorem~1]{sliwa2012}.

The coefficient-vector argument in the proof of
\cite[Proposition~2]{sliwa2012} already gives the finite-support dense
subspace used in the next lemma. We record its simultaneous-exponent
formulation and the detected-quotient conclusion for use below; the
underlying argument is not new.

\begin{lemma}[Finite-support evaluation domain]\label{pre:allp}
Let $z=(z_n)\subset E^*$ be a seminormalized weak-star Schauder basic
sequence.  There is a dense linear subspace $D\subset E$ such that
$(z_n(x))$ has finite support for every $x\in D$.  Consequently $z$ is
$q$-strongly normal for every $q>0$, and
$E/\bigcap_n\ker z_n$ is separable.
\end{lemma}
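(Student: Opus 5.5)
The plan is to build $D$ from the coefficient functionals of the basis. Let $Z$ be the weak-star closed linear span of $(z_n)$; by hypothesis $(z_n)$ is a Schauder basis of $Z$ with its own norm. Its coefficient functionals $z_n^*:Z\to\R$ are continuous for the induced topology $\sigma(E^*,E)|_Z$. A linear functional on $Z$ that is weak-star continuous is evaluation at some point of $E$. So for each $n$ there is $e_n\in E$ with $f(e_n)=z_n^*(f)$ for all $f\in Z$; in particular $z_m(e_n)=\delta_{mn}$.

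Put $D_0=\lin\{e_n\}$. Every $x\in D_0$ has a finitely supported coefficient vector $(z_m(x))$. The remaining issue is density, and the natural candidate is $D=D_0+X$, where $X=\bigcap_n\ker z_n$. Every $x\in D$ still has finitely supported $(z_m(x))$.

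To show $D$ is dense, use Hahn--Banach. Let $g\in E^*$ vanish on $D$. Then $g$ vanishes on $X={}^\perp\{z_n\}$, so $g$ lies in the weak-star closure of $\lin\{z_n\}$, which is $Z$. Expand $g$ in the basis: $g=\sum_m z_m^*(g)z_m$ in norm. Then $z_m^*(g)=g(e_m)=0$ for every $m$, hence $g=0$. So $D$ is dense.

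The step I expect to be the main obstacle is the identification $({}^\perp\{z_n\})^\perp=Z$ together with the passage from weak-star continuity of $z_n^*$ on the subspace $Z$ to a representing vector $e_n\in E$. This needs an extension argument. A functional continuous for the relative weak-star topology is bounded on $Z$ by finitely many evaluations $|f(x_1)|,\dots,|f(x_k)|$. It therefore factors through $f\mapsto(f(x_i))_{i\le k}$, and so equals $f\mapsto f(e)$ for some $e$ in the span of the $x_i$. I would write this out carefully. The bipolar identification is the standard bipolar theorem.

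Once $D$ is in hand, the consequences are immediate. Each $x\in D$ gives a finitely supported sequence $(z_n(x))$, which lies in $\ell_q$ for all $q>0$. Hence $G_q(z)\supseteq D$ is dense for every $q$.

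For separability, the quotient map $E\to E/X$ sends $D=D_0+X$ onto the image of $D_0$. That image is dense because $D$ is dense, and $D_0$ is the linear span of countably many vectors. Rational combinations of the $e_n$ therefore give a countable dense set, so $E/X$ is separable.

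Seminormalization is not really needed beyond guaranteeing that the $z_n$ are nonzero, so that the basis structure is meaningful.
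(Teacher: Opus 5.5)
Your proposal is correct and follows essentially the same route as the paper: you represent the weak-star continuous coefficient functionals by vectors $e_n\in E$, take $D=\lin\{e_n\}+\bigcap_n\ker z_n$, and get density from Hahn--Banach using $(\bigcap_n\ker z_n)^\perp=Z$ and uniqueness of the basis expansion. One small correction: in the paper's sense of weak-star Schauder basic (see the end of the proof of Lemma~\ref{pre:reflexive}), the expansion of $g\in Z$ converges weak-star rather than necessarily in norm; this is harmless, since all coefficients of $g$ vanish and so $g=0$ in any case.
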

\begin{proof}
Let
\[
 Y=\overline{\operatorname{span}\{z_n:n\in\N\}}^{\,\sigma(E^*,E)}.
\]
For a weak-star Schauder basic sequence, the coefficient functionals
$\varphi_n$ on $Y$ are weak-star continuous.  Hence for every $n$ there
is $x_n\in E$ such that
\[
 \varphi_n(y)=y(x_n)\qquad(y\in Y).
\]
Put $K=\bigcap_n\ker z_n$.  Then $K={}^\perp Y$.  We claim that
\[
 \overline{K+\operatorname{span}\{x_n:n\in\N\}}=E.
\]
Indeed, if the image of $\operatorname{span}\{x_n\}$ were not dense
in $E/K$, Hahn--Banach would give a nonzero element $y\in K^\perp=Y$
vanishing on every $x_n$.  Then all coefficients
$\varphi_n(y)=y(x_n)$ would vanish, contradicting uniqueness of the
Schauder expansion of $y$.

Set
$D=K+\operatorname{span}\{x_n:n\in\N\}$.  The claim shows that
$D$ is dense.  Moreover, for
$x=k+\sum_{j=1}^m a_jx_j\in D$ one has
$z_n(k)=0$ and $z_n(x_j)=\varphi_j(z_n)=\delta_{jn}$, so
$(z_n(x))$ has finite support.  Thus $D\subset G_q(z)$ for every
$q>0$, proving simultaneous $q$-strong normality.  Finally, the cosets
of the $x_n$ have dense linear span in $E/K$, so $E/K$ is separable.
\end{proof}

\begin{lemma}[Reflexive spans]\label{pre:reflexive}
Every norm-closed reflexive subspace $Y$ of a dual $E^*$ is weak-star
closed, and every Schauder basis of $Y$ is weak-star Schauder basic in
$E^*$.
\end{lemma}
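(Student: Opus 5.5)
The proof has two parts: first show that $Y$ is weak-star closed, then show that the coefficient functionals of any basis of $Y$ are weak-star continuous on $Y$.

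\emph{Weak-star closedness.} By the Krein--Šmulian theorem it suffices to show that $B_Y=Y\cap B_{E^*}$ is weak-star closed. Since $Y$ is reflexive, $B_Y$ is weakly compact in $Y$. The norm topology of $Y$ is the restriction of that of $E^*$, so by Hahn--Banach $B_Y$ is also $\sigma(E^*,E^{**})$-compact. The weak-star topology $\sigma(E^*,E)$ is coarser than $\sigma(E^*,E^{**})$, hence $B_Y$ is weak-star compact and therefore weak-star closed. The same reasoning applies to every ball $rB_Y$, so Krein--Šmulian gives that $Y$ is weak-star closed.

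\emph{Basis part.} Let $(z_n)$ be a Schauder basis of $Y$ with coefficient functionals $\varphi_n\in Y^*$. Since $Y$ is weak-star closed, the weak-star closed linear span of $(z_n)$ is $Y$ itself. It remains to show that each $\varphi_n$ is $\sigma(Y,E)$-continuous, i.e.\ of the form $y\mapsto y(x_n)$ with $x_n\in E$.

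Because $Y$ is weak-star closed, $Y=({}^\perp Y)^\perp$. The map
\[
 Y\longrightarrow (E/{}^\perp Y)^*
\]
is therefore an isometric isomorphism, and it is also a homeomorphism between $\sigma(E^*,E)|_Y$ and the weak-star topology of $(E/{}^\perp Y)^*$. Put $X=E/{}^\perp Y$. Then $X^*\cong Y$ is reflexive, so $X$ is reflexive and $J_X(X)=X^{**}$.

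Hence every $\varphi\in Y^*=X^{**}$ is evaluation at some $\bar x\in X$. Lifting $\bar x$ to an element $x\in E$ gives $\varphi(y)=y(x)$ for all $y\in Y$. Applying this to each $\varphi_n$ shows that the coefficient functionals are weak-star continuous, so $(z_n)$ is weak-star Schauder basic in $E^*$.

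The main technical point is this last identification: that the restricted weak-star topology on $Y$ coincides with $\sigma(Y,E/{}^\perp Y)$, together with the reflexivity transfer from $X^*$ to $X$. Both are standard duality facts, and the rest of the argument is routine.
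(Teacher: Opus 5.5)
Your proposal is correct and follows essentially the same route as the paper. You prove weak-star closedness via weak compactness of $B_Y$ and Krein--\v{S}mulian, and weak-star continuity of the coefficient functionals via $X=E/{}^\perp Y$, $X^*\cong Y$ reflexive, hence $X$ reflexive. The paper's brief remark that norm convergence of the expansion implies weak-star convergence is the only step you leave implicit, and it is trivial.
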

\begin{proof}
Let $Y\subset E^*$ be norm closed and reflexive.  Its unit ball $B_Y$ is
weakly compact, and the inclusion $Y\hookrightarrow E^*$ is
weak-to-weak-star continuous.  Hence $B_Y$ is weak-star compact, and
therefore weak-star closed in $E^*$.  By the Krein--\v{S}mulian theorem,
a convex subset of a dual space is weak-star closed once its intersection
with every closed ball is weak-star closed.  Applied to the linear subspace
$Y$, this shows that $Y$ is weak-star closed.

Let $Y_\perp=\{x\in E:y(x)=0\text{ for every }y\in Y\}$ and put
$X=E/Y_\perp$.  Since $Y$ is weak-star closed, $X^*$ identifies
isometrically with $Y$.  The dual $X^*=Y$ is reflexive, hence $X$ is
reflexive and the canonical map $X\to Y^*$ is onto.  Therefore every
coefficient functional of every Schauder basis of $Y$ is evaluation at
some element of $E$ modulo $Y_\perp$.  Norm convergence of the basis
expansion implies weak-star convergence, so the basis is a Schauder basis
of the weak-star closed space $Y$ and is weak-star Schauder basic in $E^*$.
\end{proof}

\begin{lemma}[Hereditary $\ell_1$ obstruction]\label{pre:nostrong}
Let $Z$ be hereditarily $\ell_1$, put $E=Z^*$, and let
$Y=J_ZZ\subset Z^{**}=E^*$.  Then $Y$ contains no normalized strongly
normal sequence, and every weak-star null sequence in $Y$ is weakly null
as a sequence in $Y$.
\end{lemma}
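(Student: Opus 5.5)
The plan is to prove the second assertion first, because it is formal and is used in the first. On $Y=J_ZZ$, the weak-star topology $\sigma(Z^{**},Z^*)$ is carried by $J_Z$ to $\sigma(Z,Z^*)$. Indeed, $J_Zz(f)=f(z)$ for $f\in E=Z^*$. Since $J_Z$ is an isometric embedding, a sequence in $Y$ is weak-star null in $E^*$ exactly when its preimage is weakly null in $Z$, i.e.\ when it is weakly null in $Y$.

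For the first assertion I argue by contradiction. Suppose $(z_n)\subset Y$ is normalized and strongly normal. By the selection theorem \eqref{pre:selection}, some subsequence $(w_n)$ is weak-star Schauder basic. It is still normalized, so Lemma~\ref{pre:allp} applies. Let $W$ be the weak-star closed linear span of $(w_n)$ and $K=\bigcap_n\ker w_n={}^\perp W$. Then $E/K$ is separable and $(E/K)^*=K^\perp=W$ isometrically. Since $(w_n)$ is a Schauder basis of $W$ (norm expansions), $W=[w_n]$. This space is norm closed and contained in $Y$, so $W=J_Z(V)$ for an infinite-dimensional closed subspace $V\subset Z$.

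The key step is to show that $W$ is reflexive. Every $g\in V^*$ extends by Hahn--Banach to some $f\in Z^*=E$. Hence every functional on $W$ is evaluation at an element of $E$, which gives $W^*=E/K$ via the natural map. Consequently the weak-star topology $\sigma(W,E/K)$, coming from $W=(E/K)^*$, coincides with the weak topology $\sigma(W,W^*)$. By Banach--Alaoglu, $B_W$ is $\sigma(W,E/K)$-compact, hence weakly compact, so $W\cong V$ is reflexive. But $V$ is an infinite-dimensional subspace of the hereditarily $\ell_1$ space $Z$, so it contains a copy of $\ell_1$. This is impossible in a reflexive space, and the contradiction shows that $Y$ contains no normalized strongly normal sequence.

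The main obstacle is the identification $W^*=E/K$, i.e.\ that the predual provided by Lemma~\ref{pre:allp} is the full dual of $W$. This is where $W\subset J_ZZ$ is essential: Hahn--Banach extension must take place in $Z$, not in $E^*$. I will verify carefully that the pairing between $E/K$ and $W$ is the same whether $W$ is viewed inside $Z^{**}$ or as $V\subset Z$. I will also check that the weak-star basic hypothesis yields $W=[w_n]$ in norm, which needs norm convergence of the expansions. If that interpretation fails, the same argument still works with $W$ taken to be the weak-star closed span, as long as $W\subset Y$. That inclusion holds because a norm-closed reflexive span is weak-star closed by Lemma~\ref{pre:reflexive}. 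Alternatively, one can run the argument on $V=J_Z^{-1}(W\cap Y)$.
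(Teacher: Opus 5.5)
Your proof of the weak-star/weak assertion is the same as the paper's. The first assertion, however, has a genuine gap at the step ``$(w_n)$ is a Schauder basis of $W$ (norm expansions), so $W=[w_n]\subset Y$''.

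In this paper, following \cite{sliwa2012} after \cite{johnsonrosenthal}, a weak-star Schauder basic sequence is a basis of its weak-star closed span \emph{for the weak-star topology}. The expansions converge weak-star, not in norm; this is why the proof of Lemma~\ref{pre:reflexive} must deduce weak-star convergence from norm convergence. Write $V=[z_n]\subset Z$ with $w_n=J_Zz_n$. Then $K=V^\perp$ and $W=K^\perp=V^{\perp\perp}\cong V^{**}$, so $W\subset Y$ holds exactly when $V$ is reflexive. Your step therefore assumes what you want to derive.

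The step also genuinely fails. Take $Z=c_0$, so that $E=\ell_1$ and the unit vectors of $\ell_\infty=c_0^{**}$ lie in $J_{c_0}c_0$ with $G_1=\ell_1=E$. Every subsequence of them is weak-star basic, with weak-star closed span isometric to $\ell_\infty$ but norm-closed span isometric to $c_0$. Your reflexivity argument after the step never uses the hereditary $\ell_1$ hypothesis, so it would prove $c_0$ reflexive. Reading the expansions in norm is not an option either: then \eqref{pre:selection} would fail for this same sequence.

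Neither fallback repairs this. Lemma~\ref{pre:reflexive} needs $[w_n]$ to be reflexive, which is the conclusion, and is false here since $[w_n]\cong V$ contains $\ell_1$. And $J_Z^{-1}(W\cap Y)$ is just $V$ again. Its image ball is not weak-star compact unless $V$ is reflexive, so Banach--Alaoglu gives nothing.

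The missing ingredient is the separability of $E/K$ from Lemma~\ref{pre:allp}, which you record but never use. Your own Hahn--Banach remark shows that restriction $Z^*\to V^*$ is onto with kernel $V^\perp=K$. Hence $V^*\cong E/K$ is separable. But $V$ is an infinite-dimensional subspace of a hereditarily $\ell_1$ space, so it contains a copy of $\ell_1$. Restriction then maps $V^*$ onto $\ell_1^*=\ell_\infty$, which is nonseparable, a contradiction. This is the paper's proof, and it needs nothing about $W$ beyond $K={}^\perp W$.
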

\begin{proof}
If a normalized strongly normal sequence existed in $J_ZZ$, pass to the
weak-star Schauder basic subsequence given by \eqref{pre:selection} and
write it as $(J_Zz_n)$.  By Lemma~\ref{pre:allp}, the quotient
$Z^*/\bigcap_n\ker(J_Zz_n)$ is separable. The restriction map
$Z^*\to[z_n]^*$ is onto by Hahn--Banach and has kernel
$[z_n]^\perp=\bigcap_n\ker(J_Zz_n)$. Hence
\[
 Z^*/[z_n]^\perp\cong [z_n]^*
\]
is separable.  But the infinite-dimensional subspace $[z_n]$ of the
hereditarily $\ell_1$ space $Z$ contains a copy of $\ell_1$, so
$\ell_\infty$ is a quotient of $[z_n]^*$, a contradiction.  Finally,
weak-star convergence of $J_Zz_n$ in $Z^{**}$ is precisely weak
convergence of $z_n$ in $Z$.
\end{proof}

\begin{proposition}[Amplification by successive blocks]
\label{pre:amplification}
Let \(p>1\), and let \(y=(y_n)\subset E^*\) be bounded. If
\(G_p(y)\ne E\), there are successive finite intervals \(I_k\)
and scalars \(a_n^{(k)}\) such that
\[
 u_k=\sum_{n\in I_k}a_n^{(k)}y_n,\qquad
 \|u_k\|=1,\qquad G_p(y)\subset G_1(u).
\]
If \(G_p(y)\) is proper and dense, then \(E\) has an
infinite-dimensional separable quotient, and one may choose a
further subsequence of these blocks that is weak-star Schauder
basic and \(q\)-strongly normal simultaneously for every \(q>0\).
In particular, if \(y\) is normalized and Schauder basic in this
case, then \(\cB(y)=(0,\infty)\).
Moreover, if a linear subspace \(Y\subset E^*\) contains no
normalized strongly normal sequence, then every bounded
\(y\subset Y\) satisfies
\[
 \overline{G_p(y)}=E\quad\Longleftrightarrow\quad G_p(y)=E.
\]
\end{proposition}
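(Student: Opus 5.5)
The plan is to convert the failure of $G_p(y)=E$ into unboundedness of finite combinations of \emph{tails} of $y$ measured in $\ell_{p'}$, with $p'=p/(p-1)<\infty$. Rapidly growing successive blocks, once normalized, then have coefficient vectors whose $\ell_{p'}$-norms decay geometrically. Hölder's inequality turns this decay into absolute summability of $u_k(x)$ on $G_p(y)$. The remaining assertions then follow from \eqref{pre:selection} and Lemma~\ref{pre:allp}.

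\textbf{Step 1 (tail unboundedness).} Fix $N$. Suppose there were $C$ with $\|\sum_{n\ge N}a_ny_n\|\le C\|a\|_{p'}$ for all finitely supported $a$ on $[N,\infty)$. Then for each $x\in E$ the functional $a\mapsto\sum_{n\ge N}a_ny_n(x)$ on $c_{00}$ would be bounded by $C\|x\|\,\|a\|_{p'}$. Since $c_{00}$ is dense in $\ell_{p'}$ (here $p'<\infty$ because $p>1$), duality gives $(y_n(x))_{n\ge N}\in\ell_p$. The finitely many initial terms are irrelevant, so $G_p(y)=E$. Hence, when $G_p(y)\ne E$, for every $N$ and every $M$ there is a finitely supported $a$ on $[N,\infty)$ with $\|a\|_{p'}\le1$ and $\|\sum a_ny_n\|\ge M$. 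I expect this step to be the main point requiring care, namely that the converse of \eqref{pre:upper} must be applied to tails rather than to the whole sequence.

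\textbf{Step 2 (construction).} Inductively choose successive intervals $I_k$ and coefficients $b^{(k)}$ supported in $I_k$, each $I_k$ lying beyond $I_{k-1}$, with $\|b^{(k)}\|_{p'}\le1$ and $\|v_k\|\ge 2^k$, where $v_k=\sum_{n\in I_k}b^{(k)}_ny_n$. Put $u_k=v_k/\|v_k\|$ and $a^{(k)}=b^{(k)}/\|v_k\|$, so that $\|a^{(k)}\|_{p'}\le 2^{-k}$. For $x\in G_p(y)$, Hölder's inequality gives
\[
 |u_k(x)|\le 2^{-k}\,\|(y_n(x))_{n\in I_k}\|_p\le 2^{-k}\|(y_n(x))\|_p ,
\]
which is summable in $k$. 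Hence $G_p(y)\subset G_1(u)$.

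\textbf{Step 3 (the dense proper case).} If $G_p(y)$ is dense and proper, then $G_1(u)$ is dense, so $u$ is a normalized strongly normal sequence. By \eqref{pre:selection} it has a weak-star Schauder basic subsequence $z$. By Lemma~\ref{pre:allp}, $z$ is $q$-strongly normal for every $q>0$, and $E/K$ is separable, where $K=\bigcap_n\ker z_n$. This quotient is infinite-dimensional because its dual $K^\perp$ contains the linearly independent functionals $z_n$. A subsequence of normalized successive blocks is again a normalized block sequence. So if $y$ is normalized and Schauder basic, then $\cB(y)\supset(0,\infty)$, and hence equality holds.

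\textbf{Step 4 (no strongly normal sequences in $Y$).} Let $Y$ contain no normalized strongly normal sequence, and let $y\subset Y$ be bounded. The implication $G_p(y)=E\Rightarrow\overline{G_p(y)}=E$ is trivial. Conversely, suppose $\overline{G_p(y)}=E$ but $G_p(y)\ne E$. Steps~2--3 then produce the blocks $u_k$, which lie in the linear subspace $Y$ and form a normalized strongly normal sequence. This contradicts the hypothesis on $Y$, so $G_p(y)=E$.
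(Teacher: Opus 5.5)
Your proof is correct and follows essentially the same route as the paper. The paper likewise shows that no tail admits an upper $\ell_{p'}$ estimate, builds successive blocks amplified by $2^k$ and uses H\"older's inequality, then invokes \eqref{pre:selection} and Lemma~\ref{pre:allp}, and handles the last assertion by noting that the blocks stay in $Y$. The only differences are cosmetic: you verify directly that a tail estimate forces $G_p(y)=E$, and you normalize $\|b^{(k)}\|_{p'}\le1$ with $\|v_k\|\ge2^k$ instead of using the paper's ratio condition $d_k>2^k\|b^{(k)}\|_{p'}$.
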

\begin{proof}
Put \(p'=p/(p-1)\). No tail of \(y\) satisfies an upper
\(\ell_{p'}\) estimate. Indeed, adding the finitely many omitted
coordinates would yield such an estimate for the whole sequence,
and the converse to \eqref{pre:upper} would give \(G_p(y)=E\).
Starting with \(N_0=0\), choose \(N_k>N_{k-1}\) and a scalar
family \(b^{(k)}\) supported in \(I_k=(N_{k-1},N_k]\) such that
\[
 d_k:=\left\|\sum_{n\in I_k}b_n^{(k)}y_n\right\|
       >2^k\|b^{(k)}\|_{p'}.
\]
Set \(a^{(k)}=b^{(k)}/d_k\). H\"older's inequality gives
\[
 |u_k(x)|\le 2^{-k}\|(y_n(x))\|_p
 \qquad(x\in G_p(y)).
\]
Summation proves the inclusion. If \(G_p(y)\) is dense, then
\(u\) is normalized and strongly normal. Apply
\eqref{pre:selection} and Lemma~\ref{pre:allp}; a subsequence
of successive blocks is still a successive block sequence.
The detected quotient is infinite dimensional because the
selected sequence is basic. Finally, all \(u_k\) belong to
\(Y\) when \(y\subset Y\), proving the last assertion.
\end{proof}

\begin{remark}\label{pre:operator-range-credit}
The separable-quotient consequence also follows from the classical
Saxon--Wilansky characterization by proper dense operator ranges
\cite{saxonwilansky}. Indeed, \(G_p(y)\), endowed with the graph
norm \(\|x\|+\|(y_n(x))\|_p\), is Banach and its inclusion
into \(E\) is a bounded operator with proper dense range.
Proposition~\ref{pre:amplification} additionally produces successive
normalized blocks of the given sequence.
\end{remark}

\begin{corollary}[Vector-dependent exponents]
\label{pre:union-amplification}
Let \(\varnothing\ne A\subset(1,\infty)\), and let
\(y\subset E^*\) be bounded. If \(G_q(y)\ne E\) for every
\(q\in A\), there is a normalized successive block sequence
\(u\) of \(y\) such that
\[
 D_A(y)\subset G_1(u).
\]
Consequently, if \(D_A(y)\) is dense, then \(E\) has an
infinite-dimensional separable quotient; when \(y\) is
normalized and Schauder basic, \(\cB(y)=(0,\infty)\).
\end{corollary}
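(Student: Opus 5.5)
The plan is to rerun the amplification construction of Proposition~\ref{pre:amplification}, but to let the exponent change from block to block. Each block will be chosen to dominate one exponent of $A$ on a growing finite list, with a geometric factor. The difficulty is that $A$ may be uncountable, while only countably many blocks are available. To handle this, I will use monotonicity: $G_q(y)\subset G_t(y)$ for $q<t$ (Lemma~\ref{pre:elementary}(i)). Fix a sequence $q_j\in A$ with $q_j\downarrow\inf A$ or $q_j=\sup A$-type cofinality. Concretely, take $(q_j)\subset A$ with $\sup_j q_j=\sup A$, where $q_j$ is constant if $\sup A\in A$. Then for every $q\in A$ there is $j$ with $q\le q_j$, hence $G_q(y)\subset G_{q_j}(y)$, and so
\[
D_A(y)=\bigcup_j G_{q_j}(y).
\]
This reduction to a countable increasing family is the first step.

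Next, for each $j$ put $q_j'=q_j/(q_j-1)$. As in the proof of Proposition~\ref{pre:amplification}, the hypothesis $G_{q_j}(y)\ne E$ and the converse to \eqref{pre:upper} show that no tail of $y$ satisfies an upper $\ell_{q_j'}$ estimate. Enumerate the indices so that each $j$ occurs infinitely often; the schedule $j(k)=$ the $k$-th term of $1,1,2,1,2,3,\dots$ will do. Inductively choose successive intervals $I_k$ beyond $N_{k-1}$ and coefficients $b^{(k)}$ supported there with
\[
d_k=\Bigl\|\sum_{n\in I_k}b^{(k)}_ny_n\Bigr\|>2^k\|b^{(k)}\|_{q_{j(k)}'},
\]
and set $u_k=d_k^{-1}\sum b^{(k)}_ny_n$, so that $\|u_k\|=1$.

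The step needing care is the estimate. For $x\in G_{q_j}(y)$ and a block with $j(k)=j$, H\"older gives
\[
|u_k(x)|\le 2^{-k}\|(y_n(x))\|_{q_j}.
\]
Blocks with other indices $j(k)$ have to be controlled too. Since $j\le j'$ implies $q_j\le q_{j'}$ (taking the $q_j$ nondecreasing), we have $x\in G_{q_{j'}}(y)$ for $j'\ge j$, so blocks with $j(k)\ge j$ satisfy $|u_k(x)|\le 2^{-k}\|(y_n(x))\|_{q_{j(k)}}\le 2^{-k}\|(y_n(x))\|_{q_j}$, using that $\ell_{q}$ norms decrease in $q$. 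Blocks with $j(k)<j$ occur infinitely often and need not be summable. I therefore switch the schedule to be nondecreasing and still exhaustive: $j(k)\to\infty$ slowly, for instance $j(k)=\max\{j\le k\}$, i.e.\ $j(k)=k$. Then for fixed $x\in G_{q_j}(y)$, all but finitely many $k$ have $j(k)\ge j$, and
\[
\sum_k|u_k(x)|\le C+\sum_k 2^{-k}\|(y_n(x))\|_{q_j}<\infty.
\]
This gives $D_A(y)\subset G_1(u)$. An increasing schedule $j(k)=k$ is all that is needed; the only care required is that $q_j$ be nondecreasing.

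For the consequences: if $D_A(y)$ is dense, then $G_1(u)$ is dense, so $u$ is normalized and strongly normal. Applying \eqref{pre:selection} and Lemma~\ref{pre:allp} yields a weak-star Schauder basic subsequence of $u$, which is again a successive block sequence of $y$. This subsequence is $q$-strongly normal for every $q>0$, and it detects an infinite-dimensional separable quotient of $E$ (infinite-dimensional because it is basic). When $y$ is normalized Schauder basic, that subsequence is a normalized block sequence of $y$, so $\cB(y)=(0,\infty)$.
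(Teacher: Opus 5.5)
Your final argument is the paper's proof: a nondecreasing sequence $(q_k)$ cofinal in $A$ (constant if $A$ has a largest element), block $k$ built by the tail argument of Proposition~\ref{pre:amplification} at exponent $q_k$ with factor $2^k$, and H\"older's inequality together with $\|\cdot\|_{q_k}\le\|\cdot\|_{q}$ for all but finitely many $k$. The consequences then follow the same way via \eqref{pre:selection} and Lemma~\ref{pre:allp}. The abandoned schedule in which each index recurs infinitely often, and the stray mention of $q_j\downarrow\inf A$, would not work and should be deleted from the write-up.
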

\begin{proof}
Choose a nondecreasing cofinal sequence \((q_k)\) in \(A\),
constant if \(A\) has a largest element. At stage \(k\),
apply the tail argument in Proposition~\ref{pre:amplification}
with exponent \(q_k\) and amplification factor \(2^k\).
For \(x\in G_q(y)\), \(q\in A\), eventually \(q_k\ge q\),
so H\"older's inequality and the inclusion of scalar sequence
spaces give
\[
 |u_k(x)|\le2^{-k}\|(y_n(x))\|_{q_k}
            \le2^{-k}\|(y_n(x))\|_q.
\]
The finitely many remaining terms are harmless. The conclusions
follow as in Proposition~\ref{pre:amplification}.
\end{proof}

\begin{corollary}[A dense union in a hereditary obstruction space]
\label{pre:union-allpoint}
Suppose that a linear subspace \(Y\subset E^*\) contains no
normalized strongly normal sequence. For every seminormalized
sequence \(y\subset Y\) and every nonempty \(A\subset(0,\infty)\),
\[
 \overline{D_A(y)}=E
 \quad\Longleftrightarrow\quad
 G_q(y)=E\text{ for some }q\in A.
\]
\end{corollary}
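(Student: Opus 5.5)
The reverse implication is immediate: if $G_q(y)=E$ for some $q\in A$, then $D_A(y)\supset G_q(y)=E$. For the forward implication I will argue by contradiction, reducing everything to the absence of normalized strongly normal sequences in $Y$. Assume $\overline{D_A(y)}=E$. First I normalize: put $v_n=y_n/\|y_n\|$. Because $y$ is seminormalized, Lemma~\ref{pre:elementary}(iii) gives $G_q(v)=G_q(y)$ for every $q$, so $D_A(v)=D_A(y)$. Moreover $v\subset Y$, since $Y$ is a linear subspace. So I may assume $y$ is normalized.

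Next I split $A=A_0\cup A_1$ with $A_0=A\cap(0,1]$ and $A_1=A\cap(1,\infty)$, and treat two cases.

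\emph{Case $A_1=\varnothing$.} By Lemma~\ref{pre:elementary}(i), $G_q(y)\subset G_1(y)$ for every $q\le1$. Hence $D_A(y)\subset G_1(y)$, so $G_1(y)$ is dense. Then $y$ itself is a normalized strongly normal sequence in $Y$, contradicting the hypothesis on $Y$. So in this case $D_A(y)$ can never be dense, and the equivalence holds with both sides false. The right side is false because $G_q(y)=E$ would make $D_A(y)$ dense.

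\emph{Case $A_1\ne\varnothing$.} Fix any $t\in A_1$. For $q\in A_0$ we have $G_q(y)\subset G_t(y)$, so $D_A(y)=D_{A_1}(y)$. Suppose, toward a contradiction, that $G_q(y)\ne E$ for every $q\in A_1$. Corollary~\ref{pre:union-amplification}, applied to $A_1\subset(1,\infty)$, produces a normalized successive block sequence $u$ of $y$ with $D_{A_1}(y)\subset G_1(u)$. Each $u_k$ is a finite linear combination of the $y_n$, so $u\subset Y$. Also $G_1(u)\supset D_A(y)$ is dense, so $u$ is a normalized strongly normal sequence in $Y$, which is impossible. Hence $G_q(y)=E$ for some $q\in A_1\subset A$.

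The only delicate point is the treatment of exponents $q\le1$, which Corollary~\ref{pre:union-amplification} does not cover. The splitting above handles it through monotonicity: exponents $\le1$ either collapse into $G_1$ (when $A_1=\varnothing$) or are absorbed into $D_{A_1}$. Beyond that, one only needs to check that normalization and block formation stay inside the linear subspace $Y$.
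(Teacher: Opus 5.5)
Your proof is correct and follows the paper's argument essentially step for step. The paper likewise disposes of $A\subset(0,1]$ via $D_A(y)\subset G_1(y)$ and bounded normalization, and otherwise reduces to $D_{A\cap(1,\infty)}(y)$ and invokes Corollary~\ref{pre:union-amplification} to produce a normalized strongly normal block sequence inside $Y$.
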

\begin{proof}
Only the forward implication requires proof. If \(A\subset(0,1]\),
then \(D_A(y)\subset G_1(y)\), so bounded normalization of \(y\)
would give a strongly normal sequence in \(Y\).
Otherwise \(A\cap(1,\infty)\) is cofinal in \(A\).  Moreover
\(D_A(y)=D_{A\cap(1,\infty)}(y)\), since any exponent of \(A\) at most
one is dominated by every exponent of \(A\cap(1,\infty)\).
Corollary~\ref{pre:union-amplification} therefore gives a strongly normal
block sequence in \(Y\) unless \(G_q(y)=E\) for some \(q\in A\).
\end{proof}

\begin{corollary}\label{pre:block-density-alternative}
Let \(y\) be a normalized Schauder basic sequence such that
\(\cB(y)\ne(0,\infty)\). For every normalized block sequence
\(v\) of \(y\) and every \(p>1\),
\[
 \overline{G_p(v)}=E\quad\Longleftrightarrow\quad G_p(v)=E.
\]
\end{corollary}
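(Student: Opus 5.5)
The backward implication is trivial, since $G_p(v)=E$ is dense. For the forward implication I will argue by contraposition: suppose $v$ is a normalized block sequence of $y$, $p>1$, $G_p(v)$ is dense, and $G_p(v)\ne E$. The goal is to show that $\cB(y)=(0,\infty)$.

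The first step applies Proposition~\ref{pre:amplification} to the bounded sequence $v$ with exponent $p$. Since $G_p(v)$ is proper and dense, the proposition gives normalized successive blocks $u_k$ of $v$ with $G_p(v)\subset G_1(u)$. It also gives a further subsequence $w$ of $u$ that is weak-star Schauder basic and $q$-strongly normal for every $q>0$.

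The second step checks that $w$ is an admissible block sequence of the original sequence $y$. Each $u_k$ is a finite linear combination of the $v_n$ with $n\in I_k$, and the intervals $I_k$ are successive. Each $v_n$ is itself a finite combination of $y_j$ over successive intervals $J_n$. Hence $u_k$ is a combination of the $y_j$ with $j\in\bigcup_{n\in I_k}J_n$. These unions are successive disjoint finite intervals of $\N$, because the $J_n$ are consecutive along increasing $n$; any gaps between them are absorbed by allowing zero coefficients. Passing to a subsequence of the $u_k$ preserves this successive-block structure.

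The third step notes that $w$ is normalized, since $\|u_k\|=1$, and is $q$-strongly normal for every $q>0$. Therefore $q\in\cB(y)$ for every $q>0$, so $\cB(y)=(0,\infty)$, which contradicts the hypothesis.

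The only point needing care is the second step, namely confirming that blocks of blocks of a basic sequence are again normalized blocks of $y$ in the sense fixed in the definition of $\cB$. This holds because $y$ is Schauder basic and the composite supports are successive finite intervals. The statement also follows directly from the ``in particular'' clause of Proposition~\ref{pre:amplification} applied to $v$, combined with the inclusion $\cB(v)\subseteq\cB(y)$, which is exactly this transitivity. Here $v$ is normalized and basic, being a block sequence of a basic sequence with nonzero blocks.
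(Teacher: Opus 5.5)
Your proof is correct and follows the paper's own one-line argument: apply Proposition~\ref{pre:amplification} to $v$ to obtain a normalized block sequence of $v$, hence of $y$, that is $q$-strongly normal for every $q>0$, contradicting $\cB(y)\ne(0,\infty)$. You also check explicitly that blocks of blocks of $y$ are again blocks of $y$, a step the paper leaves implicit.
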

\begin{proof}
Otherwise Proposition~\ref{pre:amplification} would give one
block sequence of \(v\), and hence of \(y\), which is
\(q\)-strongly normal for all \(q>0\).
\end{proof}

\subsection{Perturbations and the size of the density defect}

\begin{lemma}\label{pre:perturb}
Suppose \(u,v\subset E^*\) and
\((\|u_n-v_n\|)\in\ell_q\) for some \(q>0\).
Then \(G_q(u)=G_q(v)\). If the errors belong to every \(\ell_q\),
then \(D_A(u)=D_A(v)\) for every nonempty \(A\subset(0,\infty)\).
\end{lemma}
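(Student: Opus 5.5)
The argument is a pointwise comparison of the two coefficient sequences; no earlier result beyond the definitions is needed. Fix \(x\in E\) and write \(\varepsilon_n=\|u_n-v_n\|\). Then
\[
 |u_n(x)-v_n(x)|\le\varepsilon_n\|x\|\qquad(n\in\N),
\]
so \((u_n(x)-v_n(x))\in\ell_q\) whenever \((\varepsilon_n)\in\ell_q\). Since \(\ell_q\) is a linear space for every \(q>0\), including the quasi-Banach range \(q<1\), we have \((u_n(x))\in\ell_q\) if and only if \((v_n(x))\in\ell_q\). For \(q<1\) one uses the quasi-triangle inequality \(|a+b|^q\le|a|^q+|b|^q\) rather than a norm. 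This gives \(G_q(u)=G_q(v)\) at once, and in fact \(G_q(u)\cap F=G_q(v)\cap F\) for every subset \(F\subset E\), since the comparison is made one vector at a time.

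For the second assertion, assume \((\varepsilon_n)\in\ell_q\) for every \(q>0\). Applying the first part at each \(q\in A\) gives \(G_q(u)=G_q(v)\) for all \(q\in A\). Taking the union over \(q\in A\) then yields \(D_A(u)=D_A(v)\).

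There is no real obstacle here. The only point to watch is that for \(q<1\) the argument must not rely on a triangle inequality for a norm: closure of \(\ell_q\) under sums, which holds for all \(q>0\), is enough. No boundedness or normalization of \(u\) or \(v\) is used beyond the finiteness of each \(\|u_n-v_n\|\).
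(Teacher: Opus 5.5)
Your proposal is correct and matches the paper's proof: bound $|(u_n-v_n)(x)|\le\|u_n-v_n\|\,\|x\|$ pointwise, conclude that the evaluation difference lies in $\ell_q$, and use linearity of $\ell_q$ (valid also for $q<1$) to get $G_q(u)=G_q(v)$. The second assertion follows by applying this at each $q\in A$ and taking the union, exactly as you do.
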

\begin{proof}
For each \(x\in E\),
\(|(u_n-v_n)(x)|\le\|u_n-v_n\|\|x\|\).
The evaluation difference therefore belongs to \(\ell_q\);
linearity of \(\ell_q\) proves the assertion, also when \(q<1\).
\end{proof}

\begin{lemma}[Silent direct-sum factors]\label{pre:directsum}
Let $E=E_1\oplus_1 E_2$, so $E^*=E_1^*\oplus_\infty E_2^*$, and let
$u_n=(v_n,0)$ with $v_n\in E_1^*$.  Then for every $q>0$,
\[
 G_q^E(u)=G_q^{E_1}(v)\oplus E_2.
\]
Consequently density and all-point $q$-summability for $u$ in $E$ are
equivalent to the corresponding properties for $v$ in $E_1$; the same
is true after passage to subsequences.  Moreover, weak-star nullity of a
sequence in $E_1^*\oplus\{0\}$ is exactly weak-star nullity of its
first coordinate, and weak nullity in the first-coordinate subspace is
exactly weak nullity in $E_1^*$.
\end{lemma}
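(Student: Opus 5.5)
The plan is to compute the evaluation map explicitly in coordinates and then read off each assertion from that formula. For \(E=E_1\oplus_1E_2\), the dual pairing is \((f_1,f_2)(x_1,x_2)=f_1(x_1)+f_2(x_2)\), and the dual norm is \(\max(\|f_1\|,\|f_2\|)\). Applied to \(u_n=(v_n,0)\), this gives \(u_n(x_1,x_2)=v_n(x_1)\) for every \((x_1,x_2)\in E\). Consequently \((u_n(x))\in\ell_q\) holds exactly when \((v_n(x_1))\in\ell_q\), with no restriction on \(x_2\). This yields \(G_q^E(u)=G_q^{E_1}(v)\times E_2\), and the identity is purely pointwise, so it is valid for \(q<1\) as well.

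The density and all-point assertions come next. A product set \(A\times E_2\) is dense in \(E_1\oplus_1E_2\) if and only if \(A\) is dense in \(E_1\). For the forward direction, project onto the first coordinate: the projection is a norm-one surjection and maps dense sets to dense sets. For the converse, given \((x_1,x_2)\), approximate \(x_1\) by some \(a\in A\) and keep \(x_2\) fixed; the error in the \(\oplus_1\)-norm is then \(\|x_1-a\|\). The product set equals \(E\) exactly when \(A=E_1\).

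For subsequences, a subsequence \((u_{n_k})\) is \((v_{n_k},0)\), which has the same form. Applying the identity above to it shows that the equivalences pass to subsequences, and hence that the \(\cS\) spectra coincide. Since \(\|u_n\|=\|v_n\|\), normalization is preserved as well.

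For the last sentence, weak-star nullity of \((f_n,0)\) means \(f_n(x_1)+0\to0\) for all \((x_1,x_2)\), which is exactly weak-star nullity of \((f_n)\) in \(E_1^*\). For weak nullity, the map \(f\mapsto(f,0)\) is an isometric isomorphism of \(E_1^*\) onto the closed subspace \(E_1^*\oplus\{0\}\). Weak topologies are intrinsic to the normed space by Hahn--Banach, so weak convergence in that subspace is weak convergence in \(E_1^*\).

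None of these steps is a real obstacle. The only points needing care are the identification of \((E_1\oplus_1E_2)^*\) with \(E_1^*\oplus_\infty E_2^*\), which is standard, and the remark that weak nullity here is meant inside the subspace. If weak nullity in \(E^*\) is wanted instead, it agrees with the subspace version, again by Hahn--Banach.
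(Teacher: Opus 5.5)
Your proposal is correct and follows the same route as the paper: the pointwise identity \(u_n(x_1,x_2)=v_n(x_1)\), followed by the standard product dualities. You simply spell out the density, subsequence, and weak/weak-star verifications that the paper's two-line proof leaves implicit.
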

\begin{proof}
For $(x_1,x_2)\in E$ one has $u_n(x_1,x_2)=v_n(x_1)$, which gives the
domain identity and all density assertions.  The convergence statements
follow directly from the product dualities.
\end{proof}

We will repeatedly use the standard gliding-hump selection principle,
a basic consequence of the Bessaga--Pe\l czy\'nski selection method
\cite{bessagapelczynski}, in the following elementary form. If a Banach space has a Schauder basis
and \(v_n\) is a normalized weakly null sequence, then, after passing
to a subsequence, there are normalized successive blocks \(b_k\)
such that
\begin{equation}
\|v_{n_k}-b_k\|\le 2^{-k^2}.
 \label{pre:gliding}
\end{equation}
To obtain this estimate, make the finitely many previously used
coordinates small by weak nullity, cut off the remaining tail in
norm, and normalize the resulting finite block. At each step the
two truncation errors can be chosen smaller than any prescribed
positive number. Thus Lemma~\ref{pre:perturb} applies simultaneously
to all finite exponents.

\begin{corollary}[Subsequence properties under norm-null errors]
\label{pre:norm-null-subsequences}
Let \(u,v\subset E^*\) be bounded and suppose that
\(\|u_n-v_n\|\to0\). Then \(u\) has a subsequence which is
\(q\)-strongly normal simultaneously for all \(q>0\) if and
only if \(v\) has such a subsequence. More generally, for every
fixed nonempty \(A\subset(0,\infty)\), the existence of a
subsequence \(z\) with dense \(D_A(z)\) is the same for \(u\)
and \(v\).
\end{corollary}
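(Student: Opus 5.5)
Since the hypothesis is symmetric in \(u\) and \(v\), it suffices to prove one direction of the general statement for a fixed nonempty \(A\). The case of all \(q>0\) is the special choice \(A=\{q\}\) applied to every \(q\) simultaneously; more precisely, it is handled by the same construction, as noted at the end. The plan is to pass to a subsequence along which the errors decay so fast that they lie in every \(\ell_q\), and then invoke Lemma~\ref{pre:perturb}.

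Suppose \(z=(u_{n_k})\) is a subsequence of \(u\) with \(D_A(z)\) dense. Since \(\|u_n-v_n\|\to0\), I choose a further subsequence \(k_1<k_2<\cdots\) with
\[
 \|u_{n_{k_j}}-v_{n_{k_j}}\|\le 2^{-j}.
\]
The error sequence \((2^{-j})\) belongs to \(\ell_q\) for every \(q>0\). The key point is that passing from \(z\) to its subsequence \(z'=(u_{n_{k_j}})_j\) does not destroy density of \(D_A\). Indeed, for each \(q\), if \((z_k(x))\in\ell_q\), then \((z'_j(x))\in\ell_q\), because a subsequence of an \(\ell_q\) sequence is again in \(\ell_q\). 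Hence \(G_q(z)\subset G_q(z')\) for every \(q\), so \(D_A(z)\subset D_A(z')\), and \(D_A(z')\) is dense.

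Now set \(w=(v_{n_{k_j}})_j\), which is a subsequence of \(v\). The errors \(\|z'_j-w_j\|\) lie in every \(\ell_q\), so the second assertion of Lemma~\ref{pre:perturb} gives \(D_A(w)=D_A(z')\). In particular, \(D_A(w)\) is dense. Boundedness of \(u\) and \(v\) is only needed so that these domains are defined within the paper's framework.

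For the first statement I use the same subsequence: \(G_q(w)=G_q(z')\supset G_q(z)\) for all \(q\) at once. This works because the single choice of subsequence makes the errors lie in every \(\ell_q\) simultaneously.

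I expect no real obstacle here. The only point needing care is the monotonicity of the domains under passage to subsequences, which is what allows us to thin out \(z\) before applying Lemma~\ref{pre:perturb}.
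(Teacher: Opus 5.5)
Your proposal is correct and follows the paper's proof: thin the good subsequence so the coordinate errors decay geometrically (the paper uses \(2^{-k^2}\); your \(2^{-j}\) also lies in every \(\ell_q\)), note that passing to a subsequence only enlarges each \(G_q\), apply Lemma~\ref{pre:perturb} to identify the domains for every \(q>0\) and hence their union over \(A\), and interchange \(u\) and \(v\) for the converse. You also correctly use a single thinned subsequence for the simultaneous all-\(q\) statement, rather than separate choices of subsequence for each singleton \(A=\{q\}\).
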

\begin{proof}
Thin a subsequence having the stated property until its errors
are bounded by \(2^{-k^2}\). Passing to a subsequence only
enlarges every summability domain. Lemma~\ref{pre:perturb}
then identifies the two selected \(G_q\)-domains for every
\(q>0\). It also identifies their unions over \(A\).
Interchange \(u\) and \(v\) for the converse.
\end{proof}

\begin{lemma}[Separable-defect recovery]\label{pre:defect}
Let \(y\subset E^*\) be weak-star null, and let
\(\varnothing\ne A\subset(0,\infty)\). If
\[
 E/\overline{D_A(y)}
\]
is separable, then \(y\) has a subsequence \(z\) for which
\(D_A(z)\) is dense in \(E\).
\end{lemma}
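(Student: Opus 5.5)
We prove this with a diagonal argument modelled on the proof of Lemma~\ref{pre:allp}. A weak-star null sequence is small on any given finite set of vectors. We therefore select a subsequence that becomes uniformly small along a countable set of representatives of a dense subset of the separable quotient. Those representatives then lie in every summability domain of the subsequence, and the old domain $D_A(y)$ persists because subsequences only enlarge domains.

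\emph{Step 1 (representatives).} Put $F=\overline{D_A(y)}$ and let $\pi\colon E\to E/F$ be the quotient map. Since $E/F$ is separable, we choose $x_1,x_2,\dots\in E$ such that $\{\pi(x_j)\}$ is dense in $E/F$; if $E/F=\{0\}$, take $z=y$. Then $\overline{F+\lin\{x_j:j\in\N\}}=E$. Indeed, given $x\in E$ and $\varepsilon>0$, pick $j$ with $\|\pi(x)-\pi(x_j)\|<\varepsilon$. Then $x-x_j$ lies within $\varepsilon$ of $F$, so $x$ lies within $\varepsilon$ of $F+x_j$.

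\emph{Step 2 (diagonal selection).} We choose indices $n_1<n_2<\cdots$ inductively. Since $y_n\to0$ weak-star and only the finitely many vectors $x_1,\dots,x_k$ are involved at stage $k$, there is $n_k>n_{k-1}$ with
\[
 |y_{n_k}(x_j)|\le 2^{-k}\qquad(1\le j\le k).
\]
Put $z_k=y_{n_k}$. For each fixed $j$ we then have $|z_k(x_j)|\le 2^{-k}$ for all $k\ge j$. Hence $(z_k(x_j))_k\in\ell_q$ for every $q>0$, including $q<1$, because only finitely many terms are uncontrolled. Consequently $x_j\in G_q(z)$ for every $q\in A$, and so $x_j\in D_A(z)$.

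\emph{Step 3 (old domain persists).} If $x\in G_q(y)$, then $(z_k(x))$ is a subsequence of the $\ell_q$-sequence $(y_n(x))$, hence belongs to $\ell_q$. Thus $D_A(y)\subset D_A(z)$. As noted before Lemma~\ref{pre:elementary}, $D_A(z)$ is a linear subspace, so
\[
 D_A(y)+\lin\{x_j:j\in\N\}\subset D_A(z).
\]
Taking closures and using $\overline{D_A(y)}=F$, we get
\[
 E=\overline{F+\lin\{x_j\}}\subset\overline{D_A(z)},
\]
which is the required density. Alternatively one can argue directly. Given $x\in E$ and $\varepsilon>0$, Step~1 gives $x_j$ and $f\in F$ with $\|x-x_j-f\|<\varepsilon$. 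Choose $d\in D_A(y)$ with $\|f-d\|<\varepsilon$. Then $x_j+d\in D_A(z)$ and $\|x-(x_j+d)\|<2\varepsilon$.

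The only delicate point is the choice in Step~2: a single subsequence must serve all $x_j$ and all exponents in $A$ at once. This is handled by the geometric bound $2^{-k}$, which lies in every $\ell_q$, together with the fact that at each stage only finitely many constraints must be met. No basicness or norm estimate on $y$ is needed beyond weak-star nullity.
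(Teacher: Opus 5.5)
Your proof is correct and follows essentially the same argument as the paper. You choose representatives of the separable quotient, select a subsequence that is geometrically small on the first $k$ representatives at stage $k$, and use $D_A(y)\subset D_A(z)$ together with linearity of $D_A(z)$; the paper differs only in using the bound $2^{-k^2}$ and cosets with dense linear span, neither of which matters.
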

\begin{proof}
Put $M=\overline{D_A(y)}$.  Choose representatives $x_j\in E$ such
that the linear span of the cosets $x_j+M$ is dense in $E/M$.  Since
$y_n\to0$ weak-star, choose inductively
$n_1<n_2<\cdots$ so that
\[
 |y_{n_k}(x_j)|\le 2^{-k^2}\qquad(1\le j\le k).
\]
For each fixed $j$ the scalar sequence $(y_{n_k}(x_j))_k$ therefore
belongs to $\ell_t$ for every $t>0$; hence
$x_j\in G_t((y_{n_k}))$ for every $t>0$.  Passing to a subsequence
preserves membership in every $G_q$, so
\[
 D_A(y)+\lin\{x_j:j\in\N\}\subset D_A((y_{n_k})).
\]
The first summand is dense in $M$, while the images of the $x_j$ have
dense linear span in $E/M$.  Thus the left-hand side is dense in $E$,
and so is $D_A((y_{n_k}))$.
\end{proof}

\begin{corollary}[Small-density collapse]\label{pre:small-density}
Let $y$ be a normalized normal sequence in $E^*$.
If $\operatorname{dens}E<\mathfrak b$, then
\[
 \cS(y)=(0,\infty).
\]
If, in addition, $y$ is Schauder basic, then
$\cB(y)=(0,\infty)$.
In fact, one subsequence is $q$-strongly normal simultaneously for every
$q>0$, with a common dense linear subspace of summable evaluations.
In particular this holds whenever $E$ is separable.
\end{corollary}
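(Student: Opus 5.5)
The plan is to build one subsequence $z=(y_{n_k})$ and one dense linear subspace $D\subset E$ such that $(z_k(x))$ decays superexponentially for every $x\in D$. Then $D\subset G_q(z)$ for every $q>0$, which gives $\cS(y)=(0,\infty)$ at once. This is the diagonal method of Lemma~\ref{pre:defect}, but with $M=\{0\}$: we cannot take countably many representatives, since $E$ need not be separable. So we use the cardinal hypothesis $\kappa=\operatorname{dens}E<\mathfrak b$ in place of countability.

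\textbf{Step 1: a dominating rate.} Fix a dense subset $\{x_\alpha:\alpha<\kappa\}$ of $E$. For each $\alpha$, weak-star nullity lets us define $f_\alpha\in\N^\N$ by
\[
 f_\alpha(k)=\min\{m:\ |y_n(x_\alpha)|\le 2^{-k^2}\text{ for all }n\ge m\}.
\]
The family $\{f_\alpha\}$ has size $\kappa<\mathfrak b$, so it is $\le^*$-bounded by some $g\in\N^\N$. We may take $g$ strictly increasing.

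\textbf{Step 2: the subsequence.} Put $n_k=g(k)$ and $z_k=y_{n_k}$. For each $\alpha$ we have $f_\alpha(k)\le g(k)$ for all large $k$, and hence $|z_k(x_\alpha)|\le 2^{-k^2}$ eventually. The first finitely many terms are harmless, so $(z_k(x_\alpha))\in\ell_q$ for every $q>0$.

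\textbf{Step 3: the common subspace.} Let $D=\lin\{x_\alpha\}$. It is dense, and it lies in every $G_q(z)$ because each $G_q(z)$ is a linear subspace (this holds also for $q<1$, since $\ell_q$ is linear). Therefore $z$ is $q$-strongly normal for all $q$ simultaneously, with the common dense subspace $D$. If $E$ is separable, then $\kappa=\aleph_0<\mathfrak b$, so that case is included.

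\textbf{Step 4: the block statement.} Assume $y$ is Schauder basic. Then $z$ is a block sequence of $y$ with singleton supports, and it is normalized. Hence $\cB(y)\supseteq\cS(y)=(0,\infty)$ by Lemma~\ref{pre:elementary}(iv).

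The main obstacle is Step 1: replacing the countable diagonalization by the bounding number. The subtle points are that the family really has cardinality at most $\kappa$, which requires the dense set to be indexed by $\kappa$, and that the exponent bound is uniform across the family. Choosing the fixed threshold $2^{-k^2}$ inside the definition of $f_\alpha$ settles the uniformity: one function $g$ then serves all $q$ simultaneously.
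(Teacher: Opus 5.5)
Your proof is correct and follows the paper's route. You take a dense set of size $\operatorname{dens}E<\mathfrak b$, eventually dominate the decay-rate functions by one $g$, pass to the subsequence indexed along $g$, and use singleton-support blocks for $\cB$. The only difference is your superexponential threshold $2^{-k^2}$: the paper's geometric rate $2^{-m}$ already gives $\sum_m 2^{-mq}<\infty$ for every $q>0$, so no extra uniformity is needed there.
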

\begin{proof}
Choose a dense set $D\subset E$ of cardinality
$\kappa=\operatorname{dens}E<\mathfrak b$.  For $x\in D$ choose
$f_x\in\N^\N$ so that
\[
 |y_n(x)|\le 2^{-m}\qquad(n\ge f_x(m)).
\]
Since $|D|<\mathfrak b$, the family $\{f_x:x\in D\}$ is bounded for
eventual domination.  Choose $g\in\N^\N$ with $f_x\le^*g$ for every
$x\in D$, and then strictly increasing indices $n_m\ge g(m)$.
For each fixed $x\in D$ there is $m_x$ such that
$|y_{n_m}(x)|\le2^{-m}$ for all $m\ge m_x$. For every $q>0$,
\[
 \sum_{m=1}^\infty |y_{n_m}(x)|^q
 \le \sum_{m<m_x}|y_{n_m}(x)|^q
       +\sum_{m\ge m_x}2^{-mq}<\infty.
\]
Thus
$\operatorname{span}D\subset\bigcap_{q>0}G_q((y_{n_m}))$.
This already proves simultaneous $q$-strong normality of the same
subsequence for every $q>0$; no further use of the exponent-one selection
theorem is necessary. If $y$ is Schauder basic, the chosen subsequence
is a block sequence with singleton supports, which also proves the
assertion about $\cB$.
This is the classical small-density selection argument: see
Saxon--S\'anchez Ruiz \cite{saxonruiz1996} for the separable-quotient theorem
and \cite[Corollary~11 and its proof]{survey2018} for the strongly normal
sequence proof; compare also \cite[Proposition~3.1]{kakol2026}.
\end{proof}

\begin{corollary}\label{pre:nonsep}
Let \(\varnothing\ne A\subset(0,\infty)\).
Suppose \(Y\subset E^*\) contains no normalized sequence \(z\)
for which \(D_A(z)\) is dense. Then every normalized weak-star
null sequence \(y\subset Y\) satisfies
\[
 E/\overline{D_A(y)}\text{ is nonseparable}.
\]
\end{corollary}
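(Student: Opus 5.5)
This is the contrapositive of Lemma~\ref{pre:defect}, so the plan is a one-step reduction to that lemma. Fix a normalized weak-star null sequence \(y\subset Y\) and suppose, for contradiction, that \(E/\overline{D_A(y)}\) is separable. Since \(y\) is weak-star null and \(A\) is nonempty, Lemma~\ref{pre:defect} applies directly. It gives a subsequence \(z=(y_{n_k})\) of \(y\) for which \(D_A(z)\) is dense in \(E\).

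Next check that \(z\) is a forbidden sequence. It is a subsequence of a normalized sequence, so it is normalized. All of its terms lie in \(Y\), and nothing further is needed: the hypothesis on \(Y\) is only about membership, so no closure or linearity of \(Y\) has to be used. Thus \(z\) is a normalized sequence in \(Y\) with dense \(D_A(z)\), contradicting the assumption on \(Y\). Hence \(E/\overline{D_A(y)}\) is nonseparable.

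There is no real obstacle here. The one point to verify is that Lemma~\ref{pre:defect} needs only weak-star nullity of \(y\), which is part of the hypothesis, and not normalization or basicness. It is also worth noting that the conclusion implicitly includes the case in which \(\overline{D_A(y)}=E\): then the quotient is \(\{0\}\), hence separable, and the same argument (already with \(z=y\)) yields a contradiction. So the corollary also says that \(D_A(y)\) is not dense, and in fact has nonseparable defect.
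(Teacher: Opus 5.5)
Your proposal is correct and matches the paper's proof: the corollary is the contrapositive of Lemma~\ref{pre:defect}. If $E/\overline{D_A(y)}$ were separable, the lemma would give a subsequence of $y$, still normalized and contained in $Y$, with dense $D_A$. Your side remarks are also accurate: the lemma uses only weak-star nullity, and the conclusion in particular forces $D_A(y)$ to be nondense.
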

\begin{proof}
A separable quotient would yield a forbidden subsequence by
Lemma~\ref{pre:defect}.
\end{proof}

Weak-star nullity is necessary here. If \(y_n=f\in S(E^*)\)
for all \(n\), then \(D_A(y)=\ker f\), whose quotient has
dimension one.

\section{Lorentz models and selection spectra}\label{sec:lorentz}

The finite-exponent obstruction can be described exactly in terms of a
scalar weight. Besides producing both open and closed critical
thresholds, this description determines all possible spectra for
selection by subsequences or by normalized blocks. We formulate the
weight theorem for embeddings into dual spaces in a form that isolates the
exact structural hypotheses used by the obstruction argument.

The proof architecture is as follows. The absence of strongly normal
sequences, together with Proposition~\ref{pre:amplification}, turns density
into all-point summability. Gliding-hump selection then reduces a hypothetical
forbidden exponent to the classical Lorentz block domination estimate.
The scalar weight decides both allowed and forbidden exponents. The later
classifications combine this criterion with the exponent-one selection
principle; the original-sequence construction at the end of this section
is independent of the Lorentz argument.

\subsection{The weight criterion}

Let \(w=(w_n)\) satisfy
\begin{equation}\label{lor:weight-assumptions}
 1=w_1\ge w_2\ge\cdots>0,\qquad
 w_n\longrightarrow0,\qquad
 \sum_{n=1}^{\infty}w_n=\infty.
\end{equation}
The Lorentz sequence space \(Z=d(w,1)\) consists of the scalar sequences
\(a\in c_0\) for which
\begin{equation}\label{lor:norm}
 \|a\|_Z=\sum_{n=1}^{\infty}a_n^*w_n<\infty,
\end{equation}
where \(a^*\) is the nonincreasing rearrangement of \((|a_n|)\).
This is the standard Banach Lorentz sequence space \(d(w,1)\), not a
weak-Lorentz space; see \cite[Section~4.e]{lindenstrausstzafriri} for
classical background. Its unit vectors \((e_n)\) form a normalized
\(1\)-symmetric Schauder basis. We shall use the following two classical
properties:
\begin{enumerate}
\item Every infinite-dimensional closed subspace of \(Z\) contains
      an isomorphic copy of \(\ell_1\).
\item If a seminormalized block sequence \((b_n)\) of \((e_n)\) has no
      subsequence equivalent to the unit vector basis of \(\ell_1\),
      then it dominates \((e_n)\). Thus there is \(C>0\) such that
      \begin{equation}\label{lor:classical-domination}
       \left\|\sum_n a_ne_n\right\|_Z
       \le C\left\|\sum_n a_nb_n\right\|_Z
       \qquad(a\in c_{00}).
      \end{equation}
\end{enumerate}
Property (1) is recorded in \cite[Remark~1.4]{lorentzideals}.
Property (2) is exactly the \emph{second assertion} of
\cite[Proposition~1.8]{lorentzideals}, with $p=1$; see also
\cite{altshuler}. It concerns the entire block sequence, under precisely
\eqref{lor:weight-assumptions}, with no additional regularity of $w$.
The word \emph{dominates} means \eqref{lor:classical-domination}.

Here is a short derivation from the small-coefficient block result in
\cite[Remark~1.4]{lorentzideals}. If
$\inf_n\|b_n\|_\infty=0$, choose a subsequence with supremum norms
tending to zero. Its concatenated coefficients tend to zero, so that
classical result supplies a further subsequence equivalent to the
$\ell_1$ basis, a contradiction. Hence
$\delta:=\inf_n\|b_n\|_\infty>0$. In each finite block choose a coordinate
$k_n$ with $|b_n(k_n)|\ge\delta$. Suppression, solidity, and symmetry give
\[
 \left\|\sum_n a_nb_n\right\|_Z
 \ge \left\|\sum_n a_nb_n(k_n)e_{k_n}\right\|_Z
 \ge \delta\left\|\sum_n a_ne_{k_n}\right\|_Z
 =\delta\left\|\sum_n a_ne_n\right\|_Z.
\]
Thus $C=\delta^{-1}$ works for the whole sequence in
\eqref{lor:classical-domination}; no further selection is needed there.
The only external step in this derivation is the cited classical
small-coefficient result.

We also record a direct verification of weak nullity. Put
\[
 W(m)=\sum_{i=1}^{m}w_i.
\]
Since \(w_n\to0\), one has \(W(m)/m\to0\).
For distinct indices \(n_1,\ldots,n_m\) and signs
\(\varepsilon_i\in\{-1,1\}\), symmetry gives
\[
 \left\|\sum_{i=1}^{m}\varepsilon_i e_{n_i}\right\|_Z=W(m).
\]
If some \(f\in Z^*\) satisfied \(|f(e_{n_i})|\ge\eta>0\) on
an infinite set, choosing the signs to agree with those of
\(f(e_{n_i})\) would give
\[
 m\eta\le \|f\|W(m)
\]
for every \(m\), a contradiction. Thus \(e_n\to0\) weakly in \(Z\).

For the rest of this subsection, write
\begin{equation}\label{lor:allowed-forbidden}
 A_w=\{q>0:w\in\ell_q\},\qquad
 F_w=(0,\infty)\setminus A_w.
\end{equation}
The assumptions on \(w\) imply \(A_w\subset(1,\infty)\), so
\((0,1]\subset F_w\).
The set \(A_w\) is an upper interval and \(F_w\) is downward closed.
For a bounded sequence \(z\subset E^*\), the corresponding forbidden
summability domain is
\[
 D_{F_w}(z)=\bigcup_{q\in F_w}G_q(z).
\]
It is a linear subspace because the sets \(G_q(z)\) are nested.

\begin{theorem}[The Lorentz weight criterion]\label{lor:weight}
Let \(Z=d(w,1)\), where \(w\) satisfies
\eqref{lor:weight-assumptions}, and let
\(J:Z\longrightarrow E^*\) be an isomorphic embedding.
Put $Y=JZ$ and assume:
\begin{enumerate}[label=\textup{(H\arabic*)}]
\item $Y$ contains no normalized strongly normal sequence;
\item every weak-star null sequence in $Y$ is weakly null as a sequence
      in $Y$.
\end{enumerate}
Then, with
\[
 y_n=\frac{Je_n}{\|Je_n\|},
\]
the sequence \((y_n)\) is a normalized weakly null Schauder basic sequence and
\begin{equation}\label{lor:weight-spectrum}
 \cP(y)=A_w.
\end{equation}
More precisely, the following assertions hold.
\begin{enumerate}
\item For every \(q\in A_w\), one has \(G_q(y)=E\).
\item For every normalized sequence \(z\subset Y\),
      \begin{equation}\label{lor:forbidden-union}
       \overline{D_{F_w}(z)}\ne E.
      \end{equation}
      In particular, no normalized sequence in \(Y\) is
      \(q\)-strongly normal for any \(q\in F_w\).
\item Every subsequence of \((y_n)\) has exactly the spectrum \(A_w\).
\end{enumerate}
Hypotheses \textup{(H1)}--\textup{(H2)} hold in particular for the
canonical embedding $J_Z:Z\to Z^{**}$ when $Z$ is hereditarily $\ell_1$,
by Lemma~\ref{pre:nostrong}.

In the canonical case, \((y_n)\) is \(1\)-symmetric.
More generally, the same conclusion holds whenever
the norm \(a\mapsto\|Ja\|\) on \(Z\) is invariant under changes of signs
and permutations of coordinates.
\end{theorem}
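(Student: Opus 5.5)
The plan is to get the basic-sequence facts and assertion (1) from elementary estimates in \(Z\), and to get assertion (2) by turning density into all-point summability and then contradicting the Lorentz domination property (2) via a single dual functional. Assertions (3) and \eqref{lor:weight-spectrum} are then bookkeeping.

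\emph{Basic-sequence facts.} Since \((e_n)\) is a normalized \(1\)-symmetric basis of \(Z\) and \(J\) is an isomorphic embedding, \((Je_n)\) is basic. Its norms satisfy \(\|J^{-1}\|^{-1}\le\|Je_n\|\le\|J\|\), so \((y_n)\) is a normalized Schauder basic sequence. It is weakly null because \(e_n\to0\) weakly in \(Z\), as verified directly above. In the canonical case \(J_Z\) is an isometry, so \(\|Je_n\|=1\) and \((y_n)\) inherits \(1\)-symmetry. The same holds whenever \(a\mapsto\|Ja\|\) is sign- and permutation-invariant, because then \(\|Je_n\|\) is constant.

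\emph{Assertion (1).} Fix \(q\in A_w\); then \(q>1\). For \(a\in c_{00}\), H\"older's inequality applied to the rearrangement gives
\[
 \Bigl\|\sum_n a_ne_n\Bigr\|_Z=\sum_n a_n^*w_n\le\|a\|_{q'}\|w\|_q .
\]
Dividing by \(\|Je_n\|\ge\|J^{-1}\|^{-1}\) yields \(\|\sum a_ny_n\|\le\|J\|\|J^{-1}\|\|w\|_q\|a\|_{q'}\). The converse of \eqref{pre:upper} then gives \(G_q(y)=E\). This proves (1), and hence \(A_w\subseteq\cP(y)\) and \(A_w\subseteq\cP\) of every subsequence.

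\emph{Assertion (2), the main step.} Suppose a normalized \(z\subset Y\) has \(D_{F_w}(z)\) dense.

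1. By (H1) and Corollary~\ref{pre:union-allpoint}, \(G_q(z)=E\) for some \(q\in F_w\). If \(q\le1\), then \(G_1(z)=E\), so \(z\) is strongly normal, contradicting (H1). Hence \(1<q\) and \(w\notin\ell_q\).

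2. By Lemma~\ref{pre:elementary}(ii), \(z\) is weak-star null, so by (H2) it is weakly null in \(Y\). Therefore \(v_n=J^{-1}z_n\) is a seminormalized weakly null sequence in \(Z\).

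3. Apply the gliding-hump estimate \eqref{pre:gliding} to \(v_n/\|v_n\|\). This gives a subsequence and normalized blocks \(b_k\) of \((e_n)\) with \(\|v_{n_k}/\|v_{n_k}\|-b_k\|\le2^{-k^2}\).

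4. Passing to a subsequence keeps \(G_q=E\). Rescaling by bounded factors (Lemma~\ref{pre:elementary}(iii)) and summable errors (Lemma~\ref{pre:perturb}) then give \(G_q((Jb_k))=E\).

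5. The closed-graph operator \eqref{pre:operator} and \eqref{pre:upper} give \(\|\sum a_kb_k\|_Z\le C\|a\|_{q'}\).

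6. The blocks \(b_k\) are weakly null, since they are norm-close to a weakly null sequence. So no subsequence of \((b_k)\) is equivalent to the \(\ell_1\) basis, which is not weakly null. Property (2) in \eqref{lor:classical-domination} then yields \(\|\sum a_ne_n\|_Z\le C'\|a\|_{q'}\) for all \(a\in c_{00}\).

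7. To finish, I would test this against the functional \(f(a)=\sum a_nw_n\). Because \(w\) is nonincreasing, the rearrangement inequality gives \(|f(a)|\le\sum a_n^*w_n=\|a\|_Z\), so \(\|f\|\le1\). Hence \(|\sum a_nw_n|\le C'\|a\|_{q'}\) for every finitely supported \(a\), and duality forces \(w\in\ell_q\). This contradicts \(q\in F_w\).

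I expect this step to be the main obstacle. Testing only on indicator vectors gives \(W(m)\lesssim m^{1/q'}\), which does not contradict \(w\notin\ell_q\). The full duality with the weight functional is what is needed, together with care that the block-domination property is applied to the whole block sequence \((b_k)\).

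\emph{Conclusions.} The ``in particular'' clause of (2) follows because each \(G_q(z)\) with \(q\in F_w\) is contained in \(D_{F_w}(z)\). Every subsequence of \((y_n)\) is a normalized sequence in \(Y\), so (2) excludes all of \(F_w\) from its spectrum, while (1) includes all of \(A_w\). This gives (3), and taking the sequence itself gives \(\cP(y)=A_w\). The final clause about the canonical embedding \(J_Z\) is Lemma~\ref{pre:nostrong}.
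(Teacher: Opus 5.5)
Your proposal is correct and follows essentially the same route as the paper. Assertion (1) comes from the upper $\ell_{q'}$ estimate. Assertion (2) comes from Corollary~\ref{pre:union-allpoint}, then (H2), the gliding hump, Lemma~\ref{pre:perturb}, and the Lorentz domination estimate. Your last step, using the norm-one functional $a\mapsto\sum_n a_nw_n$, is the dual form of the paper's test vectors $a_i=w_i^{q-1}$, which are exactly the extremizers for that duality.
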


\begin{proof}
Fix constants \(0<c\le C_J<\infty\) such that
\begin{equation}\label{lor:embedding-bounds}
 c\|v\|_Z\le\|Jv\|\le C_J\|v\|_Z
 \qquad(v\in Z).
\end{equation}
The sequence \((Je_n)\) is Schauder basic, since \(J\)
is an isomorphism onto its closed range. Its norms lie in
\([c,C_J]\), so normalization preserves basicness.
Since \(e_n\to0\) weakly and bounded linear maps preserve weak
convergence, \(Je_n\to0\) weakly in \(E^*\).
Multiplication by the bounded scalars \(1/\|Je_n\|\) preserves weak
nullity. The symmetry assertions follow directly from the stated
invariance of the norm.

Let \(q\in A_w\). Then \(q>1\).
For finitely supported \(a\), H\"older's inequality gives
\[
 \left\|\sum_n a_ne_n\right\|_Z
 =\sum_n a_n^*w_n
 \le \|a\|_{q'}\|w\|_q.
\]
Consequently the formal identity from \(c_{00}\), equipped with the
\(\ell_{q'}\)-norm, into \(Z\) extends to a bounded operator
\(\ell_{q'}\to Z\).
Taking its adjoint, we obtain
\begin{equation}\label{lor:dual-coefficient-bound}
 \|(f(e_n))_n\|_q\le\|f\|\|w\|_q
 \qquad(f\in Z^*).
\end{equation}
For \(x\in E\), define \(f_x\in Z^*\) by
\[
 f_x(v)=(Jv)(x).
\]
Then \(\|f_x\|\le C_J\|x\|\). By
\eqref{lor:dual-coefficient-bound} and
\eqref{lor:embedding-bounds},
\[
 \|(y_n(x))_n\|_q
 \le c^{-1}C_J\|w\|_q\|x\|.
\]
This proves the first assertion, including all-point summability.

For the negative assertion we use precisely hypotheses
\textup{(H1)}--\textup{(H2)}.  Hypothesis \textup{(H1)} allows us to
apply Corollary~\ref{pre:union-allpoint}, while \textup{(H2)} converts
the weak-star nullity obtained from Lemma~\ref{pre:elementary}(ii) into
weak nullity inside $Y$.

Let \(z=(z_n)\subset S(Y)\), and suppose that \(D_{F_w}(z)\)
is dense in \(E\). Corollary~\ref{pre:union-allpoint} gives
\(G_q(z)=E\) for some \(q\in F_w\). Necessarily \(q>1\),
since otherwise \(z\) would be strongly normal.
By Lemma~\ref{pre:elementary}(ii), the sequence \(z\) is weak-star
null, hence weakly null in \(Y\).
Put \(v_n=J^{-1}z_n\). By \eqref{lor:embedding-bounds},
\[
 C_J^{-1}\le\|v_n\|_Z\le c^{-1}.
\]
Thus \(\widehat v_n=v_n/\|v_n\|_Z\) is normalized and weakly
null, and bounded normalization gives \(G_q((J\widehat v_n))=E\).
The gliding-hump principle \eqref{pre:gliding} supplies a
subsequence and normalized successive blocks \(b_k\) of the
basis of \(Z\) such that
\[
 \|\widehat v_{n_k}-b_k\|_Z\le2^{-k^2}.
\]
Lemma~\ref{pre:perturb} implies \(G_q((Jb_k))=E\).
By \eqref{pre:upper} and the lower bound for \(J\), there is
\(C_1<\infty\) such that
\[
 \left\|\sum_k a_kb_k\right\|_Z
 \le C_1\|a\|_{q'}\qquad(a\in c_{00}).
\]
The sequence \(b_k\) is weakly null because
$\|b_k-\widehat v_{n_k}\|_Z\to0$ and $(\widehat v_{n_k})$ is weakly
null. It therefore has no subsequence equivalent to the \(\ell_1\) basis: such an
isomorphism would give a bounded functional taking the value
one on that subsequence, contrary to weak nullity.
The classical domination estimate
\eqref{lor:classical-domination} now gives
\(\|a\|_Z\le C_2\|a\|_{q'}\) for finite coefficient sequences.
Apply this to \(a_i=w_i^{q-1}\), \(1\le i\le m\). Since
these coefficients are nonincreasing,
\begin{equation}\label{lor:amplification-ratio}
 \frac{\|\sum_{i=1}^{m}w_i^{q-1}e_i\|_Z}
      {\|(w_i^{q-1})_{i=1}^{m}\|_{q'}}
 =\left(\sum_{i=1}^{m}w_i^q\right)^{1/q}
 \longrightarrow\infty,
\end{equation}
contrary to the upper estimate, because \(q\in F_w\).
This proves \eqref{lor:forbidden-union}.

Finally, every subsequence of \(y\) has all-point \(q\)-summability
for \(q\in A_w\), while \eqref{lor:forbidden-union} excludes
every other exponent, both for \(y\) and for its subsequences.
This proves \eqref{lor:weight-spectrum} and the third assertion.
\end{proof}

\begin{remark}\label{lor:pitt-route}
For the power weight $w_n=n^{-1/r}$, $r>1$, put $s=r'$. Then
$d(w,1)$ is the classical Lorentz space $\ell_{s,1}$ up to equivalent
norms. L\'opez Molina's theorem \cite[Theorem~4.6(4)]{pitt} applies to
operators $T:\ell_{u,v}(\nu)\to\ell_{p,q}(\mu)$, with
$0<u,v,p,q<\infty$ and the measures specified there.
For counting measure on both copies of $\N$, its measure hypotheses hold,
and its compactness criterion is
$v>q$, $v\ge p$, $u\ge p$, and $u\ge q$.
For an operator $\ell_s=\ell_{s,s}\to\ell_{s,1}$ we use
$(u,v,p,q)=(s,s,s,1)$; all conditions hold since $1<s<\infty$.
Thus every such operator is compact. A seminormalized sequence in
$d(w,1)$ cannot satisfy an upper $\ell_s$ estimate, even on a tail,
since the associated operator would send the weakly null unit vectors
of $\ell_s$ to a seminormalized sequence.
Consequently, under the hypotheses of Theorem~\ref{lor:weight},
$G_r(z)\ne E$ for every normalized $z\subset Y$.
Proposition~\ref{pre:amplification} and (H1) exclude density as well.
This is an independent endpoint argument for the power-weight case;
the weight proof also treats arbitrary weights, endpoint one, and the
entire forbidden union.
\end{remark}

\begin{corollary}\label{lor:nonseparable}
Under the hypotheses of Theorem~\ref{lor:weight}, every normalized
weak-star null sequence \(z\subset Y\) satisfies
\[
 E/\overline{D_{F_w}(z)}\text{ is nonseparable}.
\]
\end{corollary}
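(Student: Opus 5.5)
The corollary follows by combining Theorem~\ref{lor:weight}(2) with Corollary~\ref{pre:nonsep}, applied with $A=F_w$. First check that $F_w$ is admissible there. The assumptions \eqref{lor:weight-assumptions} give $(0,1]\subset F_w$, so $F_w$ is a nonempty subset of $(0,\infty)$ and the forbidden union $D_{F_w}(z)$ is defined, as a linear subspace, for every bounded $z\subset E^*$.

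Next, verify the hypothesis of Corollary~\ref{pre:nonsep}: $Y=JZ$ contains no normalized sequence $z$ with $D_{F_w}(z)$ dense in $E$. This is exactly assertion~(2) of Theorem~\ref{lor:weight}, namely \eqref{lor:forbidden-union}, which holds for every normalized $z\subset Y$ under (H1)--(H2).

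Now let $z\subset Y$ be normalized and weak-star null. Corollary~\ref{pre:nonsep} then gives that $E/\overline{D_{F_w}(z)}$ is nonseparable. Unwinding that corollary: if the quotient were separable, Lemma~\ref{pre:defect} would produce a subsequence $z'$ of $z$ with $D_{F_w}(z')$ dense. But $z'$ is still a normalized sequence in $Y$, which contradicts \eqref{lor:forbidden-union}.

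I see no real obstacle. The one point to get right is that Lemma~\ref{pre:defect} genuinely needs weak-star nullity; the remark following Corollary~\ref{pre:nonsep} shows the conclusion fails for constant sequences. So the hypothesis must be used there, and not merely the normalization of $z$.
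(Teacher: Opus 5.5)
Your proposal is correct and matches the paper's proof: the paper also applies Corollary~\ref{pre:nonsep} with $A=F_w$, and checks its hypothesis using the hereditary forbidden-union assertion \eqref{lor:forbidden-union} of Theorem~\ref{lor:weight}. Your extra checks, that $F_w\supset(0,1]$ is nonempty and that the subsequence produced by Lemma~\ref{pre:defect} is still a normalized sequence in $Y$, are exactly the details that the paper's one-line proof leaves implicit.
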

\begin{proof}
Apply Corollary~\ref{pre:nonsep} with \(A=F_w\) and use
the hereditary forbidden-union assertion of
Theorem~\ref{lor:weight}.
\end{proof}

\begin{remark}\label{lor:nonisometric-symmetry}
An arbitrary isomorphic embedding \(J\) need not preserve
\(1\)-symmetry in the inherited norm of \(E^*\).
Theorem~\ref{lor:weight} requires no such preservation for its
summability assertions or its hereditary obstruction.
The additional symmetry hypothesis is used only when an isometric
disjoint-copy construction is desired.
\end{remark}

\subsection{Realization and classification of selection spectra}

\begin{corollary}[Open and closed hereditary thresholds]\label{lor:examples}
Each of the sets
\begin{equation}\label{lor:realized-list}
 (1,\infty),\qquad
 (r,\infty)\ (r>1),\qquad
 [r,\infty)\ (r>1),\qquad
 \varnothing
\end{equation}
is the spectrum of a normalized weakly null \(1\)-symmetric Schauder
basic sequence \(y\subset E^*\).
Every subsequence has the same spectrum.
Moreover, if \(I\) is the spectrum in question and \(Y=[y_n]\), then
\begin{equation}\label{lor:example-union}
 \overline{\bigcup_{q\in(0,\infty)\setminus I}G_q(z)}\ne E
 \qquad\text{for every normalized sequence }z\subset Y.
\end{equation}
The space \(Y\) can be chosen hereditary \(\ell_1\).
\end{corollary}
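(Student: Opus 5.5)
The plan is to apply Theorem~\ref{lor:weight} in the canonical case $J=J_Z:Z\to Z^{**}=E^*$ with $E=Z^*$ and $Z=d(w,1)$. Property (1) of Lorentz spaces says that $Z$ is hereditarily $\ell_1$. Lemma~\ref{pre:nostrong} then yields (H1)--(H2). By the last paragraph of Theorem~\ref{lor:weight}, $y_n=J_Ze_n$ is a normalized, weakly null, $1$-symmetric Schauder basic sequence with $\cP(y)=A_w$, and every subsequence has the same spectrum by assertion~(3). The union statement \eqref{lor:example-union} is exactly \eqref{lor:forbidden-union}, since $(0,\infty)\setminus I=F_w$ when $I=A_w$. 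Moreover $Y=J_ZZ\cong Z$ is hereditarily $\ell_1$. So the whole task reduces to choosing, for each prescribed $I$, a weight $w$ satisfying \eqref{lor:weight-assumptions} with $A_w=\{q>0:w\in\ell_q\}=I$.

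The weights I would use are as follows.
\begin{itemize}
\item For $I=[r,\infty)$ with $r>1$, take $w_n=n^{-1/r}(\log(n+1))^{-2/r}$, suitably rescaled so that $w_1=1$ and the sequence is nonincreasing. Then $\sum w_n^r=\sum n^{-1}(\log (n+1))^{-2}<\infty$. For $q<r$, $w_n^q$ decays like $n^{-q/r}$ up to logarithms, so $w\notin\ell_q$. Also $\sum w_n=\infty$ because $1/r<1$.
\item For $I=(r,\infty)$ with $r>1$, take $w_n=n^{-1/r}$. Here $w\in\ell_q$ iff $q>r$, and $\sum w_n=\infty$.
\item For $I=(1,\infty)$, take $w_n=1/n$, since $\sum n^{-q}<\infty$ iff $q>1$.
\item For $I=\varnothing$, take $w_n=1/\log(n+1)$, normalized so that $w_1=1$. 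This lies in no $\ell_q$, tends to $0$, and is not summable.
\end{itemize}
In each case monotonicity and $w_n\to0$ are clear, after replacing the first finitely many terms by $1$ if necessary. That replacement does not affect membership in any $\ell_q$.

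The only real check is the closed endpoint case, where monotonicity and normalization must be made compatible. I would define $w_n=\min\{1,\,c\,n^{-1/r}(\log(n+e))^{-2/r}\}$. This is a minimum of a constant and a decreasing function, hence nonincreasing, and changing finitely many terms leaves $A_w$ unchanged.

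With these choices, Theorem~\ref{lor:weight} gives all the stated conclusions at once.
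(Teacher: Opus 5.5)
Your proposal is correct and follows the same route as the paper. The paper also applies the canonical case of Theorem~\ref{lor:weight}, with the weights $1/n$, $n^{-1/r}$, a logarithmically corrected power $n^{-1/r}$, and $\log 2/\log(n+1)$. Your $\min\{1,\cdot\}$ normalization in the closed-threshold case is a harmless variant of the paper's explicit choice $n^{-1/r}\bigl(\log 2/\log(n+1)\bigr)^{2/r}$, which already has $w_1=1$.
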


\begin{proof}
Apply the canonical case of Theorem~\ref{lor:weight}.
For the four cases in \eqref{lor:realized-list}, choose, respectively,
\[
 w_n=\frac1n,\qquad
 w_n=n^{-1/r},\qquad
 w_n=n^{-1/r}
       \left(\frac{\log 2}{\log(n+1)}\right)^{2/r},\qquad
 w_n=\frac{\log 2}{\log(n+1)}.
\]
Every weight satisfies \eqref{lor:weight-assumptions}.
The first two scalar spectra are \((1,\infty)\) and \((r,\infty)\).
For the third weight,
\[
 w_n^r=\frac{(\log 2)^2}{n(\log(n+1))^2},
\]
so the \(r\)-series converges. For \(q<r\), the exponent \(q/r<1\),
and the corresponding logarithmic correction does not make
\(\sum_n w_n^q\) converge; for \(q>r\) the series converges.
Thus this spectrum is \([r,\infty)\).
For the last weight, \(\sum_nw_n^q=\infty\) for every finite \(q>0\),
since a fixed positive power of \(\log(n+1)\) grows more slowly
than every positive power of \(n\).
All remaining conclusions follow from Theorem~\ref{lor:weight}.
\end{proof}

Recall that \(\cS(y)\) consists of those \(q>0\) for which a normalized
normal sequence \(y\) has a \(q\)-strongly normal subsequence,
and that \(\cB(y)\) denotes the corresponding exponents
for normalized block sequences when \(y\) is Schauder basic.
The next result determines their possible values.

In particular, the weight $w_n=\log 2/\log(n+1)$ in
Corollary~\ref{lor:examples} gives a normalized weakly null basic sequence
with no $p$-strongly normal subsequence for any $p>0$. This supplies an
independent negative answer to the subsequence question in
\cite{sliwa2012} and \cite[Problem~7]{survey2018}; compare the earlier
constructions in \cite{kakol2026,sliwa2026}. No priority for the negative
answer itself is claimed here.

\begin{theorem}[Classification of selection spectra]\label{lor:classification}
As the Banach space and the normalized normal sequence vary, the
possible values of \(\cS(y)\) are exactly
\begin{equation}\label{lor:selection-list}
 \varnothing,\qquad
 (0,\infty),\qquad
 (1,\infty),\qquad
 [r,\infty),\qquad
 (r,\infty)\quad(r>1).
\end{equation}
The possible values of \(\cB(y)\), for normalized normal Schauder
basic sequences, are the same.
Every set in \eqref{lor:selection-list} can be realized by a normalized
weakly null \(1\)-symmetric Schauder basic sequence satisfying
\[
 \cP(y)=\cS(y)=\cB(y).
\]
For every set other than \((0,\infty)\), the realization can also
satisfy the hereditary forbidden-union conclusion of
Corollary~\ref{lor:examples}.
The lists are unchanged if weak-star nullity of the original normalized
sequence is not assumed (with basicness still required for $\cB$).
\end{theorem}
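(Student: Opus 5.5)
The proof has two parts: a necessity argument showing that only the listed sets can occur, and a realization argument showing that each listed set does occur.

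\emph{Necessity.} By Lemma~\ref{pre:elementary}(i), both \(\cS(y)\) and \(\cB(y)\) are upper subsets of \((0,\infty)\). Every upper subset is one of \(\varnothing\), \((0,\infty)\), \([r,\infty)\) or \((r,\infty)\) for some \(r\ge0\), where \([0,\infty)\) is excluded since \(0\notin(0,\infty)\). It therefore suffices to show one thing: if the spectrum meets \((0,1]\), it is all of \((0,\infty)\). This rules out \([r,\infty)\) and \((r,\infty)\) with \(r<1\), and also \([1,\infty)\). The surviving sets are exactly \eqref{lor:selection-list}, namely \(\varnothing\), \((0,\infty)\), \((1,\infty)\), and \([r,\infty)\), \((r,\infty)\) with \(r>1\).

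Suppose \(q\le1\) lies in \(\cS(y)\). Then some subsequence \(z\) is \(q\)-strongly normal. Since \(G_q(z)\subset G_1(z)\), the subsequence \(z\) is a normalized strongly normal sequence. By the selection theorem \eqref{pre:selection}, \(z\) has a weak-star Schauder basic subsequence. Lemma~\ref{pre:allp} then makes this further subsequence \(t\)-strongly normal for every \(t>0\), so \(\cS(y)=(0,\infty)\).

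The block case is identical, because a subsequence of a normalized block sequence is again a normalized block sequence of \(y\). The same argument shows that \(\cB(y)=(0,\infty)\) whenever \(\cB(y)\) meets \((0,1]\).

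Weak-star nullity of \(y\) is never used in this necessity argument. Lemma~\ref{pre:elementary}(ii) supplies weak-star nullity of the selected subsequence automatically from density. So dropping the normality assumption on the original sequence does not enlarge the list.

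\emph{Realization.} The sets \(\varnothing\), \((1,\infty)\), \((r,\infty)\) and \([r,\infty)\) are given directly by Corollary~\ref{lor:examples}. In each case we take \(Z=d(w,1)\) with the indicated weight, and \(y_n=J_Ze_n\) in \(Z^{**}\). Theorem~\ref{lor:weight}(3) gives \(\cP(y)=\cS(y)=A_w\). For \(\cB\), note that every normalized block sequence of \(y\) is a normalized sequence in \(Y\). Hence \eqref{lor:forbidden-union} gives \(\cB(y)\subset A_w\). Combined with \(\cP\subseteq\cS\subseteq\cB\), this yields \(\cB(y)=A_w\). The hereditary forbidden-union conclusion is inherited from Corollary~\ref{lor:examples}.

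For \((0,\infty)\), I would use the unit vector basis of \(\ell_2\) as functionals on \(E=\ell_2\). This sequence is normalized, weakly null and \(1\)-symmetric. Moreover \(G_q(y)\supset c_{00}\) is dense for every \(q>0\), so \(\cP(y)=(0,\infty)\), and the inclusions \(\cP\subseteq\cS\subseteq\cB\) force all three spectra to equal \((0,\infty)\).

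The main point requiring care is the block-spectrum equality in the Lorentz realizations. This step needs \eqref{lor:forbidden-union} to cover arbitrary normalized sequences in \(Y\), not just subsequences of \(y\). Theorem~\ref{lor:weight} already supplies this, so the remaining steps are bookkeeping.
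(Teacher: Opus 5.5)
Your proposal is correct and follows essentially the same route as the paper. Necessity uses upper-set monotonicity plus the collapse at exponents $q\le1$ via the selection theorem \eqref{pre:selection} and Lemma~\ref{pre:allp}; realization uses the Lorentz models of Corollary~\ref{lor:examples}, with the hereditary forbidden union applied to all normalized blocks in $Y$, together with an orthonormal basis of $\ell_2$. Your explicit sandwich $\cP\subseteq\cS\subseteq\cB\subseteq A_w$, and your remark that Lemma~\ref{pre:elementary}(ii) gives weak-star nullity of the selected subsequence for free, only spell out what the paper's proof leaves implicit.
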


\begin{proof}
Both selection spectra are upper subsets of \((0,\infty)\).
Suppose first that \(q\in\cS(y)\) for some \(q\le1\).
The selected subsequence is strongly normal because
\(\ell_q\subset\ell_1\).  Apply the exponent-one selection theorem
\eqref{pre:selection} and then Lemma~\ref{pre:allp}; the resulting
further subsequence is \(t\)-strongly normal for every \(t>0\).
Hence \(\cS(y)=(0,\infty)\).
The same argument applies to \(\cB(y)\): first choose the normalized
\(q\)-strongly normal block sequence, then apply
\eqref{pre:selection}, and finally Lemma~\ref{pre:allp}.
A subsequence of a block sequence is again a block sequence of the
original basis.
Thus either selection spectrum, if it meets \((0,1]\), must be
\((0,\infty)\).

A nonempty upper interval disjoint from \((0,1]\) is either
\((1,\infty)\), or \([r,\infty)\), or \((r,\infty)\) for some \(r>1\).
This proves that no additional sets can occur.

For every set in \eqref{lor:selection-list} other than \((0,\infty)\),
use Corollary~\ref{lor:examples}.
The original sequence supplies all exponents in its spectrum,
both as a subsequence and as a block sequence with singleton supports.
Every normalized block belongs to \(Y=[y_n]\), so the hereditary
obstruction excludes all exponents outside that spectrum.
For \((0,\infty)\), use an orthonormal basis of \(\ell_2\):
its scalar evaluations have finite support on the dense subspace
\(c_{00}\), and hence it is \(t\)-strongly normal for every \(t>0\).
\end{proof}

\begin{remark}\label{lor:selection-interpretation}
Theorem~\ref{lor:classification} classifies the existence of good
subsequences and good normalized blocks.
The hereditary assertion excludes forbidden exponents from every
normalized sequence in the closed span.
It does not assert that every normalized block has all the allowed
exponents; an arbitrary normalized block sequence may fail even to be
normal. The original sequence supplies the positive witnesses.
In particular, for the realization \(\cB(y)=(1,\infty)\) one has
\(\inf\cB(y)=1\), and this infimum is not attained.
\end{remark}

\begin{corollary}[The general block-recovery boundary]
\label{cor:p-strong-block-dichotomy}
Let \(p>0\).
\begin{enumerate}
\item If \(0<p\le1\), every normalized \(p\)-strongly normal sequence
in an arbitrary Banach dual is strongly normal and has a weak-star
Schauder basic subsequence which is \(q\)-strongly normal simultaneously
for every \(q>0\).
\item If \(p>1\), there are a Banach space \(E\) and a normalized
weakly null \(1\)-symmetric Schauder basic sequence \(y\subset E^*\)
such that
\[
 G_p(y)=E,\qquad \cP(y)=\cS(y)=\cB(y)=[p,\infty),
\]
and no normalized sequence in \([y_n]\) is \(q\)-strongly normal
for any \(0<q<p\).
\end{enumerate}
Thus \(p=1\) is the exact universal boundary, over all Banach spaces,
for recovering a normalized strongly normal successive block sequence,
or one good simultaneously for every positive exponent.
\end{corollary}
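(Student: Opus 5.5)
Part (1) is a direct combination of earlier results. Let \(0<p\le1\) and let \(y\) be a normalized \(p\)-strongly normal sequence in \(E^*\). Since \(\ell_p\subset\ell_1\), Lemma~\ref{pre:elementary}(i) gives \(G_p(y)\subset G_1(y)\). Hence \(G_1(y)\) is dense, so \(y\) is strongly normal. The selection theorem \eqref{pre:selection} then gives a weak-star Schauder basic subsequence \(z\). Because \(z\) is normalized, it is seminormalized. Lemma~\ref{pre:allp} therefore provides a dense subspace \(D\subset E\) on which every evaluation \((z_n(x))\) has finite support. Consequently \(D\subset G_q(z)\) for all \(q>0\), so \(z\) is \(q\)-strongly normal simultaneously for every \(q>0\), with one common dense subspace. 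No new estimate is needed here.

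For part (2), fix \(p>1\) and apply the canonical case of Theorem~\ref{lor:weight} to the third weight of Corollary~\ref{lor:examples} with \(r=p\):
\[
 w_n=n^{-1/p}\left(\frac{\log 2}{\log(n+1)}\right)^{2/p}.
\]
This weight satisfies \eqref{lor:weight-assumptions}, and its scalar spectrum is \(A_w=[p,\infty)\), as computed in that corollary. Take \(Z=d(w,1)\), \(E=Z^*\) and \(J=J_Z\). By property (1) of Lorentz spaces, \(Z\) is hereditarily \(\ell_1\), so Lemma~\ref{pre:nostrong} gives hypotheses (H1)--(H2). Theorem~\ref{lor:weight}(1) yields \(G_q(y)=E\) for every \(q\in A_w\), and in particular \(G_p(y)=E\). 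The theorem also shows that \(y\) is normalized, weakly null, \(1\)-symmetric and Schauder basic, and that \(\cP(y)=[p,\infty)\).

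It remains to identify \(\cS\) and \(\cB\). By Lemma~\ref{pre:elementary}(iv), \(\cS(y)\) and \(\cB(y)\) both contain \([p,\infty)\). For the reverse inclusion, every normalized subsequence or normalized block sequence of \(y\) lies in \(Y=[y_n]=J_ZZ\). The hereditary assertion \eqref{lor:forbidden-union} of Theorem~\ref{lor:weight} then excludes every exponent in \(F_w=(0,p)\). The same assertion gives the final claim that no normalized sequence in \([y_n]\) is \(q\)-strongly normal for \(0<q<p\). This is exactly the \([r,\infty)\) case of Theorem~\ref{lor:classification}, now stated with \(r=p\).

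The closing sentence about the boundary follows by comparing the two parts. For \(p\le1\), part (1) always recovers a strongly normal successive block sequence, indeed a subsequence, that is good for every exponent. For each \(p>1\), part (2) gives \(G_p(y)=E\), yet no normalized block sequence is even strongly normal. The only step needing care is confirming that the closed endpoint \(p\) belongs to \(A_w\) while every \(q<p\) is excluded. This is the logarithmic-weight computation already carried out in Corollary~\ref{lor:examples}, so I do not expect a genuine obstacle.
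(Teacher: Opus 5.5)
Your proposal is correct and follows the paper's proof essentially step for step. For \(p\le1\) you combine \(\ell_p\subset\ell_1\), the selection theorem \eqref{pre:selection} and Lemma~\ref{pre:allp}; for \(p>1\) you take the closed-threshold Lorentz weight of Corollary~\ref{lor:examples} with \(r=p\) and use the all-point summability and hereditary forbidden-union conclusions of Theorem~\ref{lor:weight}. The only cosmetic difference is that you read off \(\cS(y)=\cB(y)=[p,\infty)\) directly from Lemma~\ref{pre:elementary}(iv) and \eqref{lor:forbidden-union}, whereas the paper cites Theorem~\ref{lor:classification}, whose proof argues in exactly that way.
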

\begin{proof}
For \(p\le1\), use \(\ell_p\subset\ell_1\),
\eqref{pre:selection}, and Lemma~\ref{pre:allp}. A subsequence is a
successive block sequence with singleton supports.
For \(p>1\), take the closed-threshold Lorentz model with spectrum
\([p,\infty)\) in Corollary~\ref{lor:examples}.
Theorem~\ref{lor:weight} gives all-point summability at \(p\), and
the hereditary forbidden-union conclusion excludes every exponent
below \(p\) from all normalized sequences in the closed span.
Theorem~\ref{lor:classification} gives the asserted selection spectra.
\end{proof}

\subsection{Spaceability of exact open thresholds}

For the power weights, a tensor estimate permits us to impose the
same open spectrum on every nonzero element of a closed space of
sequences.  We use ``spaceable'' in the standard sense; for background
on lineability and spaceability see \cite{bernalspaceability}.

\begin{lemma}[Tensor estimate for the power weight]\label{lor:tensor}
Let \(r\ge1\), put \(w_n=n^{-1/r}\), and let \(Z=d(w,1)\).
After identifying \(\mathbb N\times\mathbb N\) with \(\mathbb N\)
by any bijection, the coordinate tensor
\(a\otimes b=(a_ib_j)_{i,j}\) satisfies
\begin{equation}\label{lor:tensor-bound}
 \|a\otimes b\|_Z\le\|a\|_Z\|b\|_Z
 \qquad(a,b\in Z).
\end{equation}
The bound has constant one, independently of \(r\).
\end{lemma}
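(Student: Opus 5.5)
The plan is to reduce \eqref{lor:tensor-bound} to a submultiplicativity property of the partial sums $W(m)=\sum_{i\le m}i^{-1/r}$, using a layer-cake decomposition and the triangle inequality in $Z$.

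First I reduce to finitely supported, nonnegative, nonincreasing $a$ and $b$.
\begin{itemize}
\item The norm \eqref{lor:norm} depends only on the nonincreasing rearrangement of the moduli.
\item The moduli of $a\otimes b$ are $(|a_i||b_j|)_{i,j}$. Rearranging $a$ and $b$ separately only permutes the index set $\N\times\N$, which leaves $\|a\otimes b\|_Z$ unchanged. So we may take $a=a^*$ and $b=b^*$.
\item For general $a,b\in Z$, truncate to $a\mathbf 1_{[1,N]}$ and $b\mathbf 1_{[1,N]}$. The truncated tensors increase coordinatewise to $|a\otimes b|$. Monotone convergence in $\sum_n c_n^*w_n$ then passes the inequality to the limit. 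This also shows $a\otimes b\in c_0$, hence $a\otimes b\in Z$.
\end{itemize}

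Next comes the layer-cake decomposition. With $a_{k+1}=0$ beyond the support, write
\[
a=\sum_k\alpha_k\mathbf 1_{[1,k]},\qquad \alpha_k=a_k-a_{k+1}\ge0,
\]
and similarly $b=\sum_l\beta_l\mathbf 1_{[1,l]}$. Then
\[
a\otimes b=\sum_{k,l}\alpha_k\beta_l\,\mathbf 1_{[1,k]\times[1,l]},
\]
a finite nonnegative combination. Because $w$ is nonincreasing, $\|\cdot\|_Z$ is a genuine norm, and its triangle inequality gives
\[
\|a\otimes b\|_Z\le\sum_{k,l}\alpha_k\beta_l\,W(kl).
\]
Here I use that the indicator of a set with $m$ elements has norm $W(m)$. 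On the other side, since the decomposition is monotone,
\[
\|a\|_Z=\sum_k\alpha_kW(k),\qquad \|b\|_Z=\sum_l\beta_lW(l).
\]
So everything reduces to the scalar inequality $W(kl)\le W(k)W(l)$.

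The main step is this submultiplicativity. I would biject $\{1,\dots,kl\}$ with $[1,k]\times[1,l]$ via $i=(s-1)l+t$, where $1\le t\le l$. Then
\[
i-st=(s-1)(l-t)\ge0,
\]
so $i^{-1/r}\le(st)^{-1/r}$. Summing over $i$ gives
\[
W(kl)\le\sum_{s\le k,\,t\le l}s^{-1/r}t^{-1/r}=W(k)W(l).
\]
The constant is one, and the argument works for every $r$, in particular every $r\ge1$.
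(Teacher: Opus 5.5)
Your proof is correct and is essentially the paper's argument. The paper applies the layer-cake decomposition only to $a$ and proves the repetition bound $\|\mathbf 1_{\{1,\ldots,m\}}\otimes b\|_Z\le W(m)\|b\|_Z$ from the same index inequality $(k-1)m+i\ge ki$; your submultiplicativity $W(kl)\le W(k)W(l)$ is the case $b=\mathbf 1_{[1,l]}$ of that bound. The other difference is the passage to infinite supports: you use monotone convergence of the rearrangements (the Fatou property of the Lorentz norm), whereas the paper extends by a Cauchy/density argument and identifies the limit coordinatewise.
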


\begin{proof}
We first consider nonnegative finitely supported sequences, reordered
so that they are nonincreasing.
For \(m\) copies of \(b\), the coefficient multiplying \(b_k\)
in the Lorentz norm is
\[
 \sum_{i=1}^{m}((k-1)m+i)^{-1/r}.
\]
For \(1\le i\le m\),
\[
 (k-1)m+i-ki=(k-1)(m-i)\ge0.
\]
Consequently, with \(W(m)=\sum_{i=1}^{m}w_i\),
\[
 \sum_{i=1}^{m}((k-1)m+i)^{-1/r}
 \le k^{-1/r}\sum_{i=1}^{m}i^{-1/r}
 =w_kW(m).
\]
It follows that
\begin{equation}\label{lor:repetition-bound}
 \|\mathbf1_{\{1,\ldots,m\}}\otimes b\|_Z
 \le W(m)\|b\|_Z.
\end{equation}
Write a finite nonincreasing \(a\), with \(a_{M+1}=0\), as
\[
 a=\sum_{m=1}^{M}(a_m-a_{m+1})
                   \mathbf1_{\{1,\ldots,m\}}.
\]
The triangle inequality and \eqref{lor:repetition-bound} give
\[
 \|a\otimes b\|_Z
 \le \|b\|_Z
       \sum_{m=1}^{M}(a_m-a_{m+1})W(m)
 =\|a\|_Z\|b\|_Z.
\]
Changes of signs and permutations prove the same inequality for
arbitrary finite sequences.

Since \(c_{00}\) is dense in \(Z\), this bounded bilinear mapping
extends to \(Z\times Z\).
For clarity, if \(a^{(m)}\to a\) and \(b^{(m)}\to b\) are finite
approximations, the bound just proved gives
\begin{align*}
 \|a^{(m)}\otimes b^{(m)}
       -a^{(k)}\otimes b^{(k)}\|_Z
 &\le
 \|a^{(m)}-a^{(k)}\|_Z\|b^{(m)}\|_Z\\
 &\quad+\|a^{(k)}\|_Z\|b^{(m)}-b^{(k)}\|_Z.
\end{align*}
The tensors are therefore Cauchy.
Continuity of every coordinate functional identifies the limit
with \((a_ib_j)_{i,j}\), proving
\eqref{lor:tensor-bound}.
The choice of bijection has no effect on the norm by symmetry.
Finally, equality for \(a=b=e_1\) shows that the constant cannot
be decreased.
\end{proof}

\begin{theorem}[Spaceability of an exact open spectrum]\label{lor:open-space}
Let $r\ge1$, put $w_n=n^{-1/r}$ and $Z=d(w,1)$, and let
$J:Z\longrightarrow E^*$ satisfy hypotheses \textup{(H1)}--\textup{(H2)}
of Theorem~\ref{lor:weight}.
Suppose also that \(a\mapsto\|Ja\|\) is invariant under changes of
signs and permutations of coordinates. Put \(Y=JZ\).
Then there is a closed subspace
\[
 W\subset\ell_\infty(E^*)
\]
isometric to \(Y\), and therefore hereditary \(\ell_1\), such that
every nonzero \(u=(u_n)\in W\) has the following properties:
\begin{enumerate}
\item \(\|u_n\|\) is independent of \(n\) and is strictly positive;
\item \(u_n\to0\) weakly in \(E^*\);
\item \((u_n/\|u_1\|)\) is a normalized \(1\)-symmetric Schauder
      basic sequence;
\item \(G_q(u)=E\) for every \(q>r\), and
      \(\cP(u)=(r,\infty)\);
\item for every normalized sequence \(z\) in the closed span
      \([u_n]\), one has \(\overline{G_r(z)}\ne E\).
\end{enumerate}
In fact, the last assertion holds for every normalized sequence in
the common space \(Y\).
Moreover,
\[
 \cS(u/\|u_1\|)=\cB(u/\|u_1\|)=(r,\infty).
\]
Thus \(\inf\cB(u/\|u_1\|)=r\), and this infimum is not attained.
For every \(r\ge1\) these conclusions hold, in particular, for the
canonical choice \(E=Z^*\), \(J=J_Z\); in that case \(W\) is
isometric to \(Z\).
\end{theorem}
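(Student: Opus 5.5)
The plan is to use the tensor estimate of Lemma~\ref{lor:tensor} to build a linear isometry $\Phi:Y\to\ell_\infty(E^*)$, and then to transfer the spectral conclusions of Theorem~\ref{lor:weight} to every nonzero element of the range.

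\emph{The map.} Identify $\N\times\N$ with $\N$ by a fixed bijection, so that $Z$ is indexed by pairs. For $a\in Z$ and $n\in\N$ let $\tau_n a\in Z$ be the sequence with $(\tau_n a)_{(i,n)}=a_i$ and all other coordinates zero; this is the copy of $a$ placed in the $n$-th column. By $1$-symmetry of $Z$ and the assumed sign- and permutation-invariance of $a\mapsto\|Ja\|$, one has $\|J\tau_na\|=\|Ja\|$. Put
\[
 \Phi(Ja)=(J\tau_na)_n,\qquad W=\Phi(Y).
\]
Then $\Phi$ is linear and isometric into $\ell_\infty(E^*)$, so $W$ is closed and isometric to $Y$, hence hereditarily $\ell_1$. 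This also gives (1), since $\|u_n\|=\|Ja\|>0$ for every $n$.

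\emph{Symmetry and weak nullity, (2)--(3).} The supports of the $\tau_n a$ are pairwise disjoint, so the sequence $(\tau_na/\|a\|_Z)$ is a disjoint sequence of permuted copies of one vector. Sign- and permutation-invariance of the norm $\|J\cdot\|$ makes $(u_n/\|u_1\|)$ a $1$-symmetric sequence, and hence a Schauder basic sequence. For weak nullity I would argue as in the direct verification for $(e_n)$:
\[
 \Bigl\|\sum_{i\le m}\varepsilon_i\tau_{n_i}a\Bigr\|_Z=\|\mathbf1_{\{1,\dots,m\}}\otimes a\|_Z\le W(m)\|a\|_Z,
\]
by \eqref{lor:repetition-bound}, and $W(m)=o(m)$. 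Since $\|J\|$ is bounded, the same sign-choice argument excludes a functional bounded away from zero along a subsequence, so $J\tau_n a\to0$ weakly.

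\emph{Allowed exponents, (4).} For $q>r$ and $b\in c_{00}$, Lemma~\ref{lor:tensor} together with H\"older's inequality gives
\[
 \Bigl\|\sum_n b_n\tau_na\Bigr\|_Z=\|b\otimes a\|_Z\le\|b\|_Z\|a\|_Z\le\|w\|_q\|b\|_{q'}\|a\|_Z.
\]
This is an upper $\ell_{q'}$ estimate for $(J\tau_na)_n$. By the converse to \eqref{pre:upper} it yields $G_q(u)=E$, and the same holds for every subsequence, since subsequences only enlarge $G_q$.

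\emph{Forbidden exponents, (5).} The negative side is inherited: each $u_n$ lies in $Y$, and Theorem~\ref{lor:weight}(2) with $F_w=(0,r]$ excludes density of $G_q(z)$ for every $q\le r$ and every normalized $z\subset Y$. This proves (5), gives $\cP(u)=(r,\infty)$, and shows that every normalized block of $u/\|u_1\|$ lies in $Y$ and so has no exponent $\le r$. Hence
\[
 \cS(u/\|u_1\|)=\cB(u/\|u_1\|)=(r,\infty),
\]
and the infimum $r$ is not attained. In the canonical case $E=Z^*$, $J=J_Z$, the map $J_Z$ is an isometry, so $W$ is isometric to $Z$.

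I expect the main difficulty to be the bookkeeping that $\Phi$ is well defined and isometric on all of $Y$ rather than only on $Jc_{00}$. One must check that $\tau_n$ extends isometrically to $Z$, which follows from symmetry and density, and that the weak-nullity bound uses only \eqref{lor:repetition-bound} applied to nonincreasing rearrangements. The extension of the tensor estimate from $c_{00}$ to $Z$ is already supplied by Lemma~\ref{lor:tensor}.
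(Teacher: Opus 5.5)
Your proposal is correct and follows essentially the paper's proof: disjoint isometric copies $JR_nv$ (your column copies $J\tau_na$), the tensor estimate of Lemma~\ref{lor:tensor} with H\"older's inequality for all-point $q$-summability when $q>r$, and the forbidden exponents $(0,r]$ inherited from Theorem~\ref{lor:weight} for every normalized sequence in $Y$. The only local difference is weak nullity. There your sign-choice argument uses $\|\mathbf 1_{\{1,\dots,m\}}\otimes a\|_Z\le W(m)\|a\|_Z$, which for general $a\in Z$ should be taken from \eqref{lor:tensor-bound} rather than \eqref{lor:repetition-bound}. The paper instead uses finite truncation and weak nullity of $(Je_j)$; both arguments work.
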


\begin{proof}
Partition \(\mathbb N\) into pairwise disjoint infinite sets
\[
 A_n=\{a_{n,1}<a_{n,2}<\cdots\},\qquad n\in\mathbb N.
\]
For \(v=\sum_kv_ke_k\in Z\), define
\[
 R_nv=\sum_{k=1}^{\infty}v_ke_{a_{n,k}}.
\]
Since the basis of \(Z\) is \(1\)-symmetric, each \(R_n\) is a
linear isometry on \(Z\).
To verify this without using an infinite permutation, first apply
symmetry to each finite truncation of \(v\): the relevant finite
injection extends to a permutation of \(\mathbb N\).
Then pass to the limit using the Schauder expansion.
The same argument, using the assumed invariance of the norm induced
by \(J\), shows that
\begin{equation}\label{lor:copies-induced-isometry}
 \|JR_nv\|=\|Jv\|
 \qquad(n\in\mathbb N,\ v\in Z).
\end{equation}

Define
\[
 \Theta:Y\longrightarrow\ell_\infty(E^*),
 \qquad
 \Theta(Jv)=(JR_nv)_{n=1}^{\infty}.
\]
This is well defined and linear, and
\eqref{lor:copies-induced-isometry} shows that it is an isometry.
Its range \(W=\Theta(Y)\) is therefore closed and isometric to
\(Y\). The latter is isomorphic to the hereditary \(\ell_1\)
space \(Z\). Formula \eqref{lor:copies-induced-isometry} also proves
the assertion about the coordinate norms.

Fix \(v\in Z\).
For \(a\in c_{00}\), the vector
\[
 \sum_n a_nR_nv
\]
has, up to a fixed reindexing, the coordinates
\((a_nv_k)_{n,k}\). By Lemma~\ref{lor:tensor},
\[
 \left\|\sum_n a_nR_nv\right\|_Z
 \le\|a\|_Z\|v\|_Z.
\]
Consequently
\[
 T_v:Z\longrightarrow Z,\qquad T_ve_n=R_nv,
\]
is a bounded operator, with \(\|T_v\|\le\|v\|_Z\).
For \(x\in E\), let \(f_x\in Z^*\) be as in the proof of
Theorem~\ref{lor:weight}. Then
\[
 (JR_nv)(x)=(f_x\circ T_v)(e_n).
\]
Here \(f_x\circ T_v\in Z^*\).
For every \(q>r\), the weight \((n^{-1/r})\) belongs to \(\ell_q\);
therefore \eqref{lor:dual-coefficient-bound} implies
\[
 ((JR_nv)(x))_n\in\ell_q.
\]
This proves \(G_q(\Theta(Jv))=E\) for \(q>r\).

For the same power weight, \(A_w=(r,\infty)\) and
\(F_w=(0,r]\).
Because \(G_q(z)\subset G_r(z)\) for \(q\le r\),
\[
 D_{F_w}(z)=G_r(z).
\]
Theorem~\ref{lor:weight} consequently gives
\(\overline{G_r(z)}\ne E\) for every normalized sequence
\(z\subset Y\).
If \(v\ne0\), all coordinates \(JR_nv\) have the same nonzero norm.
Normalization therefore does not change any of their
summability domains. No exponent \(q\le r\) belongs to the spectrum
of \(\Theta(Jv)\), proving the claimed exact spectrum and the
hereditary conclusion.

We verify weak nullity explicitly.
For a finite truncation \(v^{(m)}=\sum_{k=1}^{m}v_ke_k\) and
\(F\in E^{**}\),
\[
 F(JR_nv^{(m)})
 =\sum_{k=1}^{m}v_kF(Je_{a_{n,k}})
 \longrightarrow0.
\]
Indeed, \(Je_j\to0\) weakly in \(E^*\), and for each fixed \(k\)
the disjointness of the sets \(A_n\) implies
\(a_{n,k}\to\infty\).
Moreover,
\[
 \sup_n\|JR_n(v-v^{(m)})\|
 \le\|J\|\|v-v^{(m)}\|_Z\longrightarrow0.
\]
Finite-support approximation now proves \(JR_nv\to0\) weakly.

Assume \(v\ne0\).
The vectors \(JR_nv\) have disjoint nonempty coordinate supports
when pulled back to \(Z\), so they are linearly independent.
Permuting these vectors or changing their signs corresponds to
permuting or changing the signs of the coordinates of
\(\sum_n a_nR_nv\).
The invariance of the induced norm therefore proves \(1\)-symmetry
after normalization.
Suppressing any set of the supports \(A_n\) is contractive for this
norm. In particular, the partial-sum projections associated with
\((JR_nv)\) are contractive on its algebraic span.
They extend to its closed span, and their uniform boundedness,
together with density of the finite sums, implies convergence of
all partial sums there. Thus this sequence is Schauder basic.

Finally, the original normalized sequence supplies every exponent
\(q>r\) as both a subsequence and a normalized block sequence.
Every normalized block belongs to \(Y\), where all \(q\le r\) have
already been excluded. The same is true of every subsequence.
This proves the assertions about \(\cS\), \(\cB\), and the
unattained block infimum.
\end{proof}

\begin{remark}\label{lor:open-space-scope}
The canonical construction provides an exact open threshold for
every \(r\ge1\), including \(r=1\).
The embedding formulation separates the Lorentz argument from the
question of realizing its symmetric norm inside a specified dual.
In this paper the formulation is used for Lorentz copies inside the
larger direct sums and mixed spaces of Section~\ref{pert:section};
Lemma~\ref{pre:nostrong} verifies the hypotheses in those applications.
We do not claim a separate classification of noncanonical embeddings.
Hypotheses \textup{(H1)}--\textup{(H2)} are exactly the structural
properties needed for the weight criterion, while invariance of the
induced norm is used only for the isometric and constant-coordinate-norm
conclusions of Theorem~\ref{lor:open-space}.
\end{remark}

\subsection{The original-sequence spectrum without basicness}
The collapse at exponent one in Theorem~\ref{lor:classification} is a
selection phenomenon. Without a basicness assumption, the spectrum of
the original sequence has no corresponding restriction.

\begin{theorem}\label{orig:classification}
The possible values of $\mathcal P(y)$ for normal sequences in Banach duals
are exactly
\[
 \varnothing,\qquad (0,\infty),\qquad [r,\infty),\qquad (r,\infty)
 \quad(0<r<\infty).
\]
Every such set can be realized by a normalized weakly null sequence in the
real separable Hilbert space $H=\mathbb R e_0\oplus_2\ell_2$, identified with
its dual. The realization can be chosen with $\mathcal S(y)=(0,\infty)$.
No basicness is asserted.
\end{theorem}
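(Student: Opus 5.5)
The argument has two parts. The restriction on the possible values is immediate from Lemma~\ref{pre:elementary}(i): $\cP(y)$ is an upper subset of $(0,\infty)$. Every such set is $\varnothing$, $(0,\infty)$, $[r,\infty)$ or $(r,\infty)$ with $r=\inf\cP(y)\in(0,\infty)$, and the only remaining question is whether the infimum is attained. So all the work is in the realization in $H=\R e_0\oplus_2\ell_2$. The construction uses a single scalar sequence $a=(a_k)$ with $0<a_k<1$ and $a_k\to0$, chosen so that $\{q>0:a\in\ell_q\}$ is the prescribed set. I would take, as in Corollary~\ref{lor:examples}:
\begin{itemize}
\item $a_k=2^{-k}$ for $(0,\infty)$;
\item $a_k=(k+1)^{-1/r}$ for $(r,\infty)$;
\item $a_k=(k+1)^{-1/r}(\log 2/\log(k+2))^{2/r}$ for $[r,\infty)$;
\item $a_k=\log 2/\log(k+2)$ for $\varnothing$.
\end{itemize}

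\emph{Construction with sign-paired coordinates.} For $k\ge1$ put $b_k=\sqrt{1-a_k^2}$ and define two vectors
\[
 y_{2k-1}=a_ke_0+b_ke_k,\qquad y_{2k}=a_ke_0-b_ke_k .
\]
Each $y_n$ has norm one. The sequence is weakly null because $a_k\to0$ and $e_k\to0$ weakly in $\ell_2$. The signs are the key device. If one used a single vector $a_ke_0+b_ke_k$ per index, the $\ell_2$-coordinates of $x$ could cancel the $e_0$-contribution whenever $a\in\ell_2$, and then every threshold would be forced to lie at or above $2$. With the pair, we have for $x=x_0e_0+\sum_k x_ke_k$:
\[
 |y_{2k-1}(x)|^q+|y_{2k}(x)|^q
 =|a_kx_0+b_kx_k|^q+|a_kx_0-b_kx_k|^q\ \ge\ \min(1,2^{1-q})\,|a_kx_0|^q ,
\]
because $\max(|s+t|,|s-t|)\ge|s|$. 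Consequently $x\in G_q(y)$ with $x_0\ne0$ forces $a\in\ell_q$. Conversely, if $a\in\ell_q$ then $e_0\in G_q(y)$.

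\emph{Density criterion.} Every finitely supported $x\perp e_0$ lies in $G_q(y)$, since it has only finitely many nonzero evaluations. These vectors are dense in the hyperplane $e_0^\perp$. A linear subspace that contains a dense subset of $e_0^\perp$ together with one vector having $x_0\ne0$ is dense in $H$. On the other hand, a subspace contained in $e_0^\perp$ is not dense, because $e_0^\perp$ is closed. Hence $G_q(y)$ is dense if and only if it contains some vector with $x_0\neq0$. By the inequality above, this happens if and only if $a\in\ell_q$. This gives $\cP(y)=\{q:a\in\ell_q\}$, and the scalar computations for the four choices of $a$ are exactly those already carried out in Corollary~\ref{lor:examples}.

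\emph{Subsequence spectrum.} To get $\cS(y)=(0,\infty)$, I would pick one vector from each pair along a sparse set of indices $k_j$ with $a_{k_j}\le 2^{-j}$, which is possible since $a_k\to0. Then $e_0$ and all finitely supported vectors are in every $G_t$ of this subsequence, so it is $t$-strongly normal for all $t>0$. Alternatively, Corollary~\ref{pre:small-density} gives the same conclusion directly, since $H$ is separable.

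The only delicate point is the density criterion, namely ruling out that the $\ell_2$-coordinates of $x$ compensate for the $e_0$-term. The sign pairing handles this with the elementary inequality $\max(|s+t|,|s-t|)\ge|s|$. Basicness is not claimed, and it fails here, since all $y_n$ share the $e_0$-component.
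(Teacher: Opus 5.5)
Your proof is correct and follows essentially the paper's route. Both arguments make the same upper-set reduction, then use a paired-coordinate construction in $H=\mathbb R e_0\oplus_2\ell_2$ with the same four scalar sequences (yours differs only by a constant factor in the $[r,\infty)$ case), which yields $\mathcal P(y)=\{q:a\in\ell_q\}$. The differences are cosmetic: the paper pairs $e_n$ with $(e_n+a_ne_0)/\sqrt{1+a_n^2}$, computes $G_p(y)$ exactly, and takes the odd subsequence $(e_n)$ for $\mathcal S(y)=(0,\infty)$, whereas you use $a_ke_0\pm b_ke_k$ and a sparse selection.
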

\begin{proof}
Since $\ell_p\subset\ell_q$ whenever $0<p<q$, the set $\mathcal P(y)$ is
upward closed. An upward-closed subset of $(0,\infty)$ is empty, the whole
interval, or an interval with a positive finite left endpoint, which may
or may not be included. These are precisely the displayed possibilities.

Let $a=(a_n)$ be any nonnegative null sequence and define
\[
 y_{2n-1}=e_n,\qquad
 y_{2n}=\frac{e_n+a_ne_0}{\sqrt{1+a_n^2}}.
\]
Each vector has norm one. For $x=(t,b)\in\mathbb R\oplus_2\ell_2$,
\[
 y_{2n-1}(x)=b_n,\qquad
 y_{2n}(x)=\frac{b_n+t a_n}{\sqrt{1+a_n^2}},
\]
and both expressions tend to zero. Thus $y$ is weakly null. The normalization
factors and their reciprocals are bounded. For every $p>0$ we have
\begin{align*}
 x\in G_p(y)
 &\iff b\in\ell_p\text{ and }b+ta\in\ell_p\\
 &\iff b\in\ell_p\text{ and }ta\in\ell_p.
\end{align*}
Only linearity of $\ell_p$ is needed, also when $p<1$. Consequently
\[
 G_p(y)=
 \begin{cases}
  \mathbb R\oplus(\ell_2\cap\ell_p),&a\in\ell_p,\\
  \{0\}\oplus(\ell_2\cap\ell_p),&a\notin\ell_p.
 \end{cases}
\]
Since $c_{00}$ is dense in $\ell_2$, this proves
\begin{equation}\label{orig:domain-spectrum}
 \mathcal P(y)=\{p>0:a\in\ell_p\}.
\end{equation}
The following choices give every required set:
\begin{center}
\begin{tabular}{ll}
\toprule
$a_n$ & $\mathcal P(y)$\\
\midrule
$2^{-n}$ & $(0,\infty)$\\[2pt]
$1/\log(n+2)$ & $\varnothing$\\[2pt]
$(n+1)^{-1/r}$ & $(r,\infty)$\\[2pt]
$\bigl((n+1)(\log(n+2))^2\bigr)^{-1/r}$ & $[r,\infty)$\\
\bottomrule
\end{tabular}
\end{center}
For the last choice, the series of $r$th powers converges, whereas every
$p$th-power series with $p<r$ diverges. For the second choice, every finite
power series diverges. Finally, the odd subsequence is $(e_n)$ and its
$G_p$-domain contains $\mathbb R\oplus c_{00}$ for every $p>0$. This gives
$\mathcal S(y)=(0,\infty)$.
\end{proof}

\begin{remark}
These realizations are not Schauder basic. If two distinct indices have
$a_n\ne0$, the identities
\[
 \sqrt{1+a_n^2}\,y_{2n}-y_{2n-1}=a_ne_0
\]
give a finite linear dependence. Thus Theorem~\ref{orig:classification} does not
classify original-sequence spectra under a basicness requirement, and does
not classify triples $(\mathcal P,\mathcal S,\mathcal B)$ for one basic sequence.
In particular, $[1,\infty)$ and $(1/2,\infty)$ are realized without a basicness
assumption, but no corresponding basic realization is claimed here.
\end{remark}

\section{Perturbations and joint selection spectra}\label{pert:section}

The three spectra have different stability requirements.  For an
individual exponent the relevant error ideal is $\ell_p$, whereas
subsequence and block selection have the respective thresholds $c_0$
and $\ell_1$.  All assertions below concern the norm of the coordinate
errors; no equivalence assumption on the two basic sequences is needed.

\subsection{Three sharp error thresholds}

\begin{theorem}[Universal perturbation thresholds]\label{pert:thresholds}
Let $\varepsilon=(\varepsilon_n)$ be a sequence of nonnegative real
numbers, and fix $p>0$.  Consider pairs of normalized normal sequences
$y,z\subset E^*$ satisfying
\begin{equation}\label{pert:budget}
 \|y_n-z_n\|\le\varepsilon_n\qquad(n\in\N).
\end{equation}
Each of the following is an equivalence between membership of the
error sequence in the stated class and a universal preservation property.
\begin{enumerate}[label=\textup{(\roman*)}]
\item $\varepsilon\in\ell_p$ if and only if, for every Banach space
      $E$ and every pair satisfying \eqref{pert:budget},
      $p\in\cP(y)$ exactly when $p\in\cP(z)$.  If
      $\varepsilon\in\ell_p$, the stronger identity
      $G_p(y)=G_p(z)$ holds for every such pair.
\item $\varepsilon\in c_0$ if and only if, for every Banach space
      $E$ and every pair satisfying \eqref{pert:budget},
      $\cS(y)=\cS(z)$.
\item $\varepsilon\in\ell_1$ if and only if, for every Banach space
      $E$ and every pair of Schauder basic sequences satisfying
      \eqref{pert:budget}, $\cB(y)=\cB(z)$.
\end{enumerate}
Failure of \textup{(i)} can be witnessed by weakly null sequences in a
separable Hilbert space.  Failure of either \textup{(ii)} or
\textup{(iii)} can be witnessed by weakly null $1$-unconditional basic
sequences.  In the counterexample for \textup{(iii)}, the actual errors
can always be chosen to tend to zero.
\end{theorem}

\begin{proof}
\emph{Proof map.}
Sufficiency is transferred directly through the corresponding error
ideal.  For necessity, part~\textup{(i)} uses a rank-one Hilbert-space
perturbation, part~\textup{(ii)} inserts an $\ell_2$ component on a
nonvanishing error set, and part~\textup{(iii)} converts a nonsummable
error sequence into a Lorentz weight whose row averages create a
simultaneously good block sequence.

\emph{Sufficiency.}
Part (i) follows from Lemma~\ref{pre:perturb}.
Suppose next that $\varepsilon_n\to0$.  For each fixed $q>0$,
Corollary~\ref{pre:norm-null-subsequences} applied with $A=\{q\}$
shows that $y$ has a $q$-strongly normal subsequence exactly when $z$
does.  Hence $\cS(y)=\cS(z)$, proving (ii).

For (iii), suppose $\sum_n\varepsilon_n<\infty$, and let
$v_k=\sum_{n\in I_k}c_n^{(k)}y_n$ be normalized $q$-strongly normal
blocks of $y$.  The coordinate functionals of a normalized Schauder
basis are uniformly bounded, so $|c_n^{(k)}|\le C_y$ for a constant
independent of $n,k$.  For the corresponding blocks
$w_k=\sum_{n\in I_k}c_n^{(k)}z_n$, we have
\[
 \|v_k-w_k\|\le C_y\sum_{n\in I_k}\varepsilon_n\longrightarrow0.
\]
After a further selection these errors are at most $2^{-k^2}$, the
vectors $w_k$ are nonzero, and
\[
 \left\|v_k-\frac{w_k}{\|w_k\|}\right\|
 \le2\|v_k-w_k\|.
\]
Lemma~\ref{pre:perturb} now gives $q\in\cB(z)$.
The reverse inclusion follows by interchanging the two bases.

We use the following elementary observation in the converse
constructions.  If a nonnegative sequence $b$ does not belong to
$\ell_s$, where $s>0$, it has a minorant
\begin{equation}\label{pert:minorant}
 0\le a_n\le\min\{b_n,1\},\qquad a\in c_0\setminus\ell_s.
\end{equation}
Indeed, for each $k$, the series
$\sum_n\min\{b_n,1/k\}^s$ diverges.  Choose successive finite
intervals $I_k$ exhausting $\N$ such that its sum over $I_k$ is at
least one, and set $a_n=\min\{b_n,1/k\}$ on $I_k$.

\emph{Necessity in (i).}
Suppose $\varepsilon\notin\ell_p$.
Partition $\N$ into two infinite sets
$\{i_k:k\in\N\}$ and $\{j_k:k\in\N\}$ such that
$\sum_k\varepsilon_{j_k}^p=\infty$.
Such a partition is obtained, for example, by alternating successive
finite intervals on which the divergent series has sum at least one.
By \eqref{pert:minorant}, choose
$0\le a_k\le\min\{\varepsilon_{j_k},1\}$ with
$a\in c_0\setminus\ell_p$.
In $H=\ell_2(\N_0)$, identified with its dual, put
\[
 y_{i_k}=y_{j_k}=e_k,\qquad
 z_{i_k}=e_k,\qquad
 z_{j_k}=\frac{e_k+a_ke_0}{\sqrt{1+a_k^2}}.
\]
Both sequences are normalized and weakly null.  Moreover,
\[
 \|z_{j_k}-y_{j_k}\|
 =\left(2-\frac{2}{\sqrt{1+a_k^2}}\right)^{1/2}\le a_k,
\]
so \eqref{pert:budget} holds.  The paired coordinates give
\[
 G_p(y)=\R e_0\oplus(\ell_2\cap\ell_p),\qquad
 G_p(z)=\{0\}\oplus(\ell_2\cap\ell_p).
\]
For the second identity, the coordinates indexed by $i_k$ first
give $(x_k)\in\ell_p$, and those indexed by $j_k$ then force
$x_0a\in\ell_p$, hence $x_0=0$.
Thus $G_p(y)$ is dense whereas $G_p(z)$ has closure $e_0^\perp$.

\emph{Necessity in (ii).}
Suppose $\varepsilon\notin c_0$.
There are an infinite set $J\subset\N$ and $0<\delta\le1$ such that
$\varepsilon_n\ge\delta$ on $J$.
Let $Z=d((n^{-1}),1)$ and
$E=Z^*\oplus_1\ell_2$, so $E^*=Z^{**}\oplus_\infty\ell_2$.
Put
\[
 y_n=(J_Ze_n,0),\qquad
 z_n=(J_Ze_n,\delta\mathbf1_J(n)e_n).
\]
These are normalized weakly null $1$-unconditional basic sequences:
their coefficient norms are, respectively,
$\|c\|_Z$ and
$\max\{\|c\|_Z,\delta\|(c_n)_{n\in J}\|_2\}$, so all coordinate
projections are contractive.  The error bound holds, and $\cS(y)=(1,\infty)$ follows from the canonical Lorentz criterion.
Indeed, for every sequence $u\subset J_ZZ$,
\[
 G_q^{E}((u_n,0))=G_q^{Z^*}(u)\times\ell_2
\]
by Lemma~\ref{pre:directsum}. Lemma~\ref{pre:nostrong} therefore transfers
(H1) to $J_ZZ\oplus\{0\}$, and its relative weak-star nullity is precisely
weak nullity in $Z$, giving (H2). The same domain identity applies to every
subsequence.
For finitely supported coefficients on $J$, Cauchy--Schwarz gives
\[
 \delta\|c\|_2
 \le\left\|\sum_{n\in J}c_nz_n\right\|
 \le\max\{\|(n^{-1})\|_2,\delta\}\,\|c\|_2.
\]
Thus $(z_n)_{n\in J}$ is equivalent to the basis of $\ell_2$.  Its
norm-closed span is therefore reflexive; by Lemma~\ref{pre:reflexive}
it is weak-star closed and the displayed basis is weak-star Schauder
basic.  Lemma~\ref{pre:allp} then gives $\cS(z)=(0,\infty)$.

\emph{Necessity in (iii).}
Suppose $\varepsilon\notin\ell_1$.
Choose $a\in c_0\setminus\ell_1$ satisfying
$0\le a_n\le\min\{\varepsilon_n,1\}$ by
\eqref{pert:minorant}.  Since $a\notin\ell_1$ and $a\ge0$, it has infinitely many positive
coordinates; hence $a_m^*>0$ for every $m$.  For its nonincreasing
rearrangement $a^*$, define
\[
 A(m)=\sum_{i=1}^m a_i^*,\quad A(0)=0,\qquad
 W(m)=\sqrt{A(m)/A(1)},\qquad w_m=W(m)-W(m-1).
\]
Concavity of $A$ and of the square root shows that $(w_m)$ is
positive and nonincreasing, with $w_1=1$.
Moreover $W(m)\to\infty$.  Since
$a_i^*\le a_1^*=A(1)$, one has $A(m)\le mA(1)$, and therefore
\[
 w_m\le \frac{W(m)}m\le m^{-1/2}.
\]
Thus $w$ satisfies \eqref{lor:weight-assumptions}.
Take $Z=d(w,1)$ and $E=Z^*\oplus_1c_0$, so that
$E^*=Z^{**}\oplus_\infty\ell_1$, and define
\begin{equation}\label{pert:lorentz-pair}
 y_n=(J_Ze_n,0),\qquad z_n=(J_Ze_n,a_ne_n).
\end{equation}
Both sequences are normalized and weakly null, since $e_n$ is
weakly null in $Z$ and $a_ne_n\to0$ in the norm of $\ell_1$.
They are $1$-unconditional basic: the coefficient norm for $z$ is
\begin{equation}\label{pert:coefficient-norm}
 \left\|\sum_nc_nz_n\right\|
 =\max\left\{\|c\|_Z,\sum_na_n|c_n|\right\},
\end{equation}
and coordinate projections are contractive in both terms.
For clarity, bounded coordinate projections on the finite span
extend to its closure and converge to the identity there, which
also proves the Schauder property.
The errors are $a_n\le\varepsilon_n$ and tend to zero.
The Lorentz criterion gives
\begin{equation}\label{pert:old-spectrum}
 \cP(y)=\cS(y)=\cB(y)=A_w\subset(1,\infty).
\end{equation}
Indeed, Lemma~\ref{pre:directsum} identifies every summability domain
of a sequence in the first factor with the corresponding canonical domain
times $c_0$; thus all uses of the Lorentz criterion in this direct sum
reduce literally to the canonical case.

We construct successive finite sets $F_k$ such that, with
$m_k=|F_k|$ and $D_k=\sum_{n\in F_k}a_n$,
\begin{equation}\label{pert:large-mass}
 D_k\ge2^{k^2}W(m_k).
\end{equation}
To choose a set beyond an arbitrary cutoff $N$, put
$C_N=\sum_{n\le N}a_n$.
For every $m$, a set of $m$ indices beyond $N$ can be chosen with
sum at least $A(m)-C_N$: start with the indices of the $m$ largest
terms, remove those at most $N$, and add arbitrary tail indices
until the cardinality is again $m$.
Since
\[
 \frac{A(m)-C_N}{W(m)}\longrightarrow\infty,
\]
the required successive choice is possible.
Now put
\[
 v_k=\frac1{D_k}\sum_{n\in F_k}z_n,
 \qquad
 u_k=\left(0,\frac1{D_k}\sum_{n\in F_k}a_ne_n\right).
\]
Equations \eqref{pert:coefficient-norm} and
\eqref{pert:large-mass} give
\[
 \|v_k\|=\|u_k\|=1,\qquad
 \|v_k-u_k\|=\frac{W(m_k)}{D_k}\le2^{-k^2}.
\]
The sets $F_k$ are contained in successive finite intervals, so
$v$ is a normalized block sequence of $z$.
Evaluations of $u$ have finite support on the dense subspace
$Z^*\oplus c_{00}$ of $E$.
Consequently $\cP(u)=(0,\infty)$, and
Lemma~\ref{pre:perturb} gives $\cP(v)=(0,\infty)$.
In particular $\cB(z)=(0,\infty)\ne\cB(y)$.
These normalized blocks are weak-star null, with a disjoint
$\ell_1$ component of norm one.
This proves all three converses.
\end{proof}

\begin{corollary}[Perturbations of the whole original-sequence spectrum]
\label{pert:wholeP}
Let $\varepsilon=(\varepsilon_n)$ be nonnegative.  The following are
equivalent:
\begin{enumerate}[label=\textup{(\roman*)}]
\item $\varepsilon\in\bigcap_{p>0}\ell_p$;
\item for every Banach space $E$ and every pair of normalized normal
sequences $y,z\subset E^*$ satisfying \eqref{pert:budget}, one has
$\cP(y)=\cP(z)$.
\end{enumerate}
If \textup{(i)} holds, then $G_p(y)=G_p(z)$ for every $p>0$ and every
such pair.
\end{corollary}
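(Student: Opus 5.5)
The sufficiency direction (i)$\Rightarrow$(ii) follows directly from Lemma~\ref{pre:perturb}. If $\varepsilon\in\ell_p$ for every $p>0$, then for each fixed $p$ the error sequence $(\|y_n-z_n\|)$ is dominated by $\varepsilon$ and hence lies in $\ell_p$. The lemma then gives $G_p(y)=G_p(z)$ for every $p>0$. Taking closures, $p\in\cP(y)$ exactly when $p\in\cP(z)$, so $\cP(y)=\cP(z)$; this also proves the final assertion.

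For the necessity direction (ii)$\Rightarrow$(i), I will argue by contraposition. Suppose $\varepsilon\notin\bigcap_{p>0}\ell_p$. Then there is some $p>0$ with $\varepsilon\notin\ell_p$. The necessity part of Theorem~\ref{pert:thresholds}\textup{(i)} applies to this single $p$ and produces a Banach space (the Hilbert space $\ell_2(\N_0)$) and a pair of normalized weakly null sequences $y,z$ satisfying \eqref{pert:budget} such that $p\in\cP(y)$ but $p\notin\cP(z)$. Weakly null sequences in a Hilbert space are weak-star null, so these are normalized normal sequences. Since $p$ lies in one spectrum but not the other, $\cP(y)\ne\cP(z)$, and (ii) fails.

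I do not expect a genuine obstacle. The only point to check is that the counterexample in Theorem~\ref{pert:thresholds}\textup{(i)} requires nothing more than $\varepsilon\notin\ell_p$ for one fixed $p$, which is exactly what the failure of (i) provides. It would also be worth noting explicitly that in that construction $\cP(y)=(0,\infty)$ while $\cP(z)=\{q:a\in\ell_q\}$, which excludes $p$. This makes the disagreement of the two full spectra visible, consistent with \eqref{orig:domain-spectrum}.
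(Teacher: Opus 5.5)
Your proposal is correct and follows essentially the same route as the paper: sufficiency via the domain identity $G_p(y)=G_p(z)$ for each $p>0$ (the paper cites Theorem~\ref{pert:thresholds}(i), which is itself Lemma~\ref{pre:perturb}), and necessity by choosing one $p$ with $\varepsilon\notin\ell_p$ and invoking the sharpness part of Theorem~\ref{pert:thresholds}(i). Your extra remark that the witness has $\cP(y)=(0,\infty)$ and $\cP(z)=\{q:a\in\ell_q\}$ is accurate, though not needed.
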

\begin{proof}
The implication \textup{(i)}$\Rightarrow$\textup{(ii)}, including the
domain identities, follows from Theorem~\ref{pert:thresholds}(i) for
every $p>0$.  Conversely, if
$\varepsilon\notin\bigcap_{p>0}\ell_p$, choose $p>0$ with
$\varepsilon\notin\ell_p$.  The sharpness part of
Theorem~\ref{pert:thresholds}(i) gives a pair for which membership of
$p$ in $\cP$ is not preserved, so the two spectra are different.
\end{proof}

The block counterexample also separates the individual and
subsequence spectra from the block spectrum.  The separation is
available for every proper spectrum in
Theorem~\ref{lor:classification}.

\Needspace{10\baselineskip}
\begin{corollary}[Complete block recovery without subsequence recovery]
\label{pert:separation}
Let
\[
 I\in\{\varnothing,(1,\infty),(r,\infty),[r,\infty):r>1\}.
\]
There are a Banach space $E$ and normalized weakly null
$1$-unconditional basic sequences $y,z\subset E^*$ such that
$\|y_n-z_n\|\to0$ and
\[
 \cP(y)=\cS(y)=\cB(y)=\cP(z)=\cS(z)=I,
 \qquad \cB(z)=(0,\infty).
\]
In fact one normalized block sequence of $z$ is $q$-strongly
normal simultaneously for every $q>0$.
\end{corollary}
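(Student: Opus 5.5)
The plan is to rerun the necessity construction of Theorem~\ref{pert:thresholds}(iii), but with the Lorentz weight chosen from $I$ instead of from an arbitrary error sequence. Fix the weight $w$ attached to $I$ in Corollary~\ref{lor:examples}, so that $A_w=I$. Put $Z=d(w,1)$ and $E=Z^*\oplus_1 c_0$, and define, as in \eqref{pert:lorentz-pair},
\[
 y_n=(J_Ze_n,0),\qquad z_n=(J_Ze_n,a_ne_e_n)
\]
with $a\in c_0\setminus\ell_1$, $0\le a_n\le 1$, to be chosen. By Lemma~\ref{pre:directsum} and the canonical Lorentz criterion, exactly as in the proof of Theorem~\ref{pert:thresholds}(iii), we get $\cP(y)=\cS(y)=\cB(y)=I$. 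Normalization, weak nullity and $1$-unconditional basicness of $z$ follow from \eqref{pert:coefficient-norm}, and $\|y_n-z_n\|=a_n\to0$.

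For the block part I will choose $a$ to fit $w$, rather than deriving $w$ from $a$. I need successive finite sets $F_k$ with $D_k=\sum_{F_k}a_n\ge 2^{k^2}W(m_k)$, where $W(m)=\sum_{i\le m}w_i$ is the norm of $\sum_{F_k}e_n$ in $Z$. Since $W(m)/m\to0$, I take $a_n=1/k$ on a block $F_k$ of length $m_k$, with $m_k$ so large that $m_k/k\ge 2^{k^2}W(m_k)$, and set $a_n=0$ off $\bigcup_k F_k$ (or use tiny positive fillers). Then $a\in c_0\setminus\ell_1$. With $v_k,u_k$ defined as in that proof, we get $\|v_k-u_k\|\le2^{-k^2}$, $\cP(u)=(0,\infty)$, and therefore $\cP(v)=(0,\infty)$ for the normalized block sequence $v$ of $z$. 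This gives $\cB(z)=(0,\infty)$ and the simultaneous-exponent assertion.

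The main obstacle is computing $\cP(z)$ and $\cS(z)$, which requires $z$ not to inherit any good exponents from its $\ell_1$ component. Take $x=(\phi,s)\in E$ with $s\in c_0$. Then $z_n(x)=y_n(x)+a_ns_n$, and since $|a_ns_n|\le a_n\|s\|_\infty$, membership of $(z_n(x))$ in $\ell_q$ depends only on $y$ when $a\in\ell_q$. I cannot make $a\in\ell_q$ for every $q$ while keeping $a\notin\ell_1$. Instead I observe that $(a_ns_n)\in\ell_q$ whenever $s\in c_{00}$, so density is unaffected in the one direction. For the other direction I write $z=y+(0,a_ne_n)$ and work with subsequences: if a subsequence $z'$ has dense $G_q(z')$, then restricting to vectors $x$ with second coordinate $0$ shows $G_q(y')\times\{0\}$ contains the projection onto $Z^*$ of $G_q(z')\cap(Z^*\times\{0\})$. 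That intersection need not be dense, so I will argue via Lemma~\ref{pre:directsum} in a different way. The $\ell_1$ component $(0,a_ne_n)$ consists of norm-null vectors lying in a space whose subsequences are uniformly handled. For $q\notin I$, density of $G_q(z')$ in $E$ implies, via the quotient map $E\to Z^*$ (which is open, hence preserves density of images), that the images $\{\phi:\exists s,\ (y'_n(\phi)+a'_ns_n)\in\ell_q\}$ are dense. I will combine this with Corollary~\ref{pre:union-allpoint} applied inside $Y=J_ZZ\oplus\ell_1$ restricted to the first factor. If this fails, the fallback is to verify the hereditary obstruction of Theorem~\ref{lor:weight} directly for $z$: a $q$-strongly normal subsequence $z'$ with $q\notin I$ would give, by Corollary~\ref{pre:union-allpoint}-type amplification, a strongly normal block sequence whose $Z$-parts have norm bounded below, which can be excluded using \eqref{pert:coefficient-norm} together with the Lorentz domination argument. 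Since $\cP(z)\subseteq\cS(z)$ and $I=\cP(y)\subseteq\cP(z)$ (the positive witnesses come from all-point summability on $Z^*\oplus c_{00}$ together with $\cP(y)=I$), this yields $\cP(z)=\cS(z)=I$.
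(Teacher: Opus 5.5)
Your construction matches the paper's: the pair \eqref{pert:lorentz-pair} in $E=Z^*\oplus_1c_0$ with the Lorentz weight realizing $I$, and an error sequence fitted to $w$. You take $a_n=1/k$ on $F_k$ with $m_k/k\ge 2^{k^2}W(m_k)$; the paper takes $a_n=2^{-k}$ on successive intervals with $W(m_k)\le 2^{-k^2}2^{-k}m_k$. Your normalized blocks $v_k$ with $\cP(v)=(0,\infty)$ and the lower bound $I\subseteq\cP(z)$ from the dense subspace $Z^*\oplus c_{00}$ are also right.

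\textbf{The gap.} The step you flag as the main obstacle, $\cS(z)\subseteq I$, is left unfinished, and neither proposed route closes it.
\begin{enumerate}
\item The quotient-map route only gives density of $\{\phi:\ \exists s\in c_0,\ (y'_n(\phi)+a'_ns'_n)\in\ell_q\}$, that is, of $\{\phi:\ y'(\phi)\in\ell_q+a'c_0\}$. This set contains $G_q(y')$ but is in general larger, so its density tells you nothing about $G_q(y')$.
\item The fallback cannot work as described. The closed span of $z$ does contain normalized strongly normal sequences, namely your own blocks $v_k$. So hypothesis (H1) fails there, and Corollary~\ref{pre:union-allpoint} cannot be used to turn density into all-point summability.
\item Nothing keeps the $Z$-parts of amplified blocks bounded below: the $v_k$ themselves have $Z$-parts of norm at most $2^{-k^2}$.
\end{enumerate}

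\textbf{The missing idea.} You never need $a\in\ell_q$ for the whole sequence, only along a subsequence. There thinning is free because $\|y_n-z_n\|=a_n\to0$. This is the norm-null invariance of subsequence spectra, Theorem~\ref{pert:thresholds}(ii) (equivalently Corollary~\ref{pre:norm-null-subsequences}).

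Concretely, suppose $(z_{n_k})$ is $q$-strongly normal. Pass to a further subsequence $(z_{n_{k_j}})$ with $a_{n_{k_j}}\le2^{-j}$; this only enlarges $G_q$, so $G_q((z_{n_{k_j}}))$ is still dense. Lemma~\ref{pre:perturb} gives $G_q((z_{n_{k_j}}))=G_q((y_{n_{k_j}}))$, so $q\in\cS(y)=I$.

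Hence $\cS(z)=\cS(y)=I$. Combined with your lower bound, $I\subseteq\cP(z)\subseteq\cS(z)=I$, which is exactly how the paper finishes.
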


\begin{proof}
Choose the Lorentz weight $w$ realizing $I$ in
Corollary~\ref{lor:examples}, and put $W(m)=\sum_{i=1}^mw_i$.
Since $W(m)/m\to0$, choose positive integers $m_k$ such that
\[
 \frac{W(m_k)}{2^{-k}m_k}\le2^{-k^2}.
\]
Partition $\N$ into successive intervals $F_k$ of lengths $m_k$,
and set $a_n=2^{-k}$ on $F_k$.
Use \eqref{pert:lorentz-pair} in $E=Z^*\oplus_1c_0$.
The preceding proof establishes normalization, weak nullity,
unconditional basicness, \eqref{pert:old-spectrum}, and the
existence of a normalized block sequence $v$ with
$\cP(v)=(0,\infty)$.
Since $a_n\to0$, part (ii) of Theorem~\ref{pert:thresholds}
gives $\cS(z)=I$.
For $q\in I=A_w$, the canonical evaluation sequence belongs to
$\ell_q$ at every point of $Z^*$, and the added term has finite
support at every point of $c_{00}$.
Thus $G_q(z)$ contains the dense subspace $Z^*\oplus c_{00}$.
It follows that $I\subset\cP(z)\subset\cS(z)=I$, including when
$I$ is empty.
\end{proof}

\begin{remark}\label{pert:quotient}
Theorem~\ref{pert:thresholds}(ii) says that the subsequence
spectrum is an invariant of normalized normal sequences modulo
$c_0(E^*)$ in $\ell_\infty(E^*)$.
Neither the individual spectrum nor, for basic sequences, the
block spectrum has this invariance.
The latter failure persists for normalized weakly null
$1$-unconditional basic sequences with identical individual and
subsequence spectra, by Corollary~\ref{pert:separation}.
\end{remark}

\subsection{Joint realization of subsequence and block spectra}
\label{joint:section}

The individual classification of selection spectra has a joint form.
Write
\[
 \mathfrak A=
 \{\varnothing,(0,\infty),(1,\infty)\}
 \cup\{[r,\infty),(r,\infty):r>1\}.
\]
Every subsequence of a normalized basic sequence is a normalized
block sequence, so $\cS(y)\subset\cB(y)$.  The next theorem shows
that this inclusion is the only additional restriction.

\begin{theorem}[Joint selection spectra]\label{joint:classification}
The pairs $(\cS(y),\cB(y))$ arising from normalized Schauder basic
sequences in Banach duals are exactly
\[
 \{(I,J): I,J\in\mathfrak A,\ I\subseteq J\}.
\]
Every such pair is realized by a normalized, weakly null,
$1$-unconditional basic sequence satisfying $\cP(y)=\varnothing$.
If $I=J$, there is also a normalized weakly null $1$-symmetric
realization satisfying $\cP(y)=\cS(y)=\cB(y)=I$.
\end{theorem}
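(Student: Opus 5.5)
The necessity half is formal. By Theorem~\ref{lor:classification}, each of $\cS(y)$ and $\cB(y)$ lies in $\mathfrak A$. Since a subsequence is a normalized block sequence with singleton supports, Lemma~\ref{pre:elementary}(iv) gives $\cS(y)\subseteq\cB(y)$. So every arising pair is nested, and the work is realization.

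For $I=J$ the $1$-symmetric realization with $\cP=\cS=\cB=I$ is already in Theorem~\ref{lor:classification}. For a general nested pair $I\subseteq J$ with $P(y)=\varnothing$, I would build a mixed, hereditarily $\ell_1$ construction in the spirit of the proof of Theorem~\ref{pert:thresholds}(iii). The case $J=(0,\infty)$ is essentially done there already: the $\ell_1$-coordinate $a_ne_n$ produces a block sequence good for every exponent. For $J$ a proper spectrum, I would replace that $c_0$/$\ell_1$ factor by a second Lorentz factor. Let $w^I$ and $w^J$ be the weights of Corollary~\ref{lor:examples} realizing $I$ and $J$, with $w^I$ replaced by the logarithmic weight when $I=\varnothing$. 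Put $Z=d(w^I,1)\oplus_1 d(w^J,1)$, which is hereditarily $\ell_1$, and $E=Z^*$. Define
\[
 z_n=\bigl(J e^I_n,\ a_nJe^J_n\bigr)
\]
in the $\ell_\infty$-sum of the biduals. Here $a_n\to0$ is chosen, as in Corollary~\ref{pert:separation}, to be constant $2^{-k}$ on long successive intervals $F_k$ with $W_I(m_k)\le 2^{-k^2}\,2^{-k}W_J(m_k)$. This is possible because $w^J\ge w^I$ may be arranged, or the interval lengths chosen to dominate the ratio. The coefficient norm is the maximum of the two Lorentz norms, so $z$ is $1$-unconditional basic and weakly null.

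I would then compute the three spectra in turn. \emph{Subsequences:} since $\|z_n-(Je^I_n,0)\|\to0$, Theorem~\ref{pert:thresholds}(ii) together with Lemma~\ref{pre:directsum} gives $\cS(z)=I$. \emph{Blocks, lower bound:} the normalized averages $v_k$ over $F_k$ are within $2^{-k^2}$ of $(0,\text{normalized }J\text{-block})$. That block is a normalized flat block of $d(w^J,1)$, which again dominates and is dominated by a symmetric weight sequence whose dual coefficient bound \eqref{lor:dual-coefficient-bound} holds for $q\in J$, so all-point $q$-summability for $q\in J$ follows via Lemma~\ref{pre:perturb}. For closed endpoints I would check this directly with the averaging weight $W_J(m)/m$; if it fails, I would instead take the block sequence to be $(0,Je^J_n)$-approximants along a subsequence, which the gliding hump allows. \emph{Blocks, upper bound:} by Lemma~\ref{pre:nostrong} and Corollary~\ref{pre:union-allpoint}, density of $G_q(v)$ for any normalized block $v$ forces $G_q(v)=E$. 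Testing on each summand separately (Lemma~\ref{pre:directsum}), either the $J$-components of a subsequence are bounded below, and the gliding-hump argument of Theorem~\ref{lor:weight} places $q\in J$, or the $I$-components dominate along a subsequence and then $q\in I\subseteq J$. So $\cB(z)=J$.

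To force $\cP(y)=\varnothing$, I would interleave: let $y$ run through the $z_n$ on a sparse set of indices and through normalized vectors $(Je^L_n,0,\dots)$ of a third Lorentz factor with logarithmic weight on the complement, all in one $\ell_1$-sum. The original sequence then contains the whole log-weight sequence, so $\cP(y)=\varnothing$ by Theorem~\ref{lor:weight}. Subsequences and blocks can only gain from the $z$-part, while the log part contributes nothing by its hereditary obstruction, so the pair $(\cS,\cB)$ is unchanged. The main obstacle I expect is the upper bound for blocks. The mixed space is not a Lorentz space, so the domination estimate \eqref{lor:classical-domination} must be applied componentwise, after a careful case split on which coordinate carries the norm along a subsequence.
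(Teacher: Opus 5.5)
\paragraph{Verdict: a genuine gap.} Your necessity argument, the case $I=J$, the case $J=(0,\infty)$, and interleaving with a logarithmic Lorentz sequence to force $\cP=\varnothing$ are sound. For the interleaving you still need a projection argument for mixed blocks; ``can only gain'' is not enough. The gap is the realization for proper $J\supsetneq I$.

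\paragraph{Why the construction fails.} Since $I\subseteq J$, the weight $w^J$ is the faster-decaying one. For the weights of Corollary~\ref{lor:examples} there is a constant $C$ with $w^J_n\le Cw^I_n$ for all $n$. Hence $W_J\le CW_I$, and your requirement $W_I(m_k)\le 2^{-k^2-k}W_J(m_k)$ fails for all large $k$. Arranging $w^J\ge w^I$ gets the direction backwards: it would give $J=A_{w^J}\subseteq A_{w^I}=I$. Worse, the same comparison gives, for every block supported beyond $N$,
\[
 \Bigl\|\sum_n c_na_ne^J_n\Bigr\|_{d(w^J,1)}
 \le C\sup_{n>N}a_n\,\Bigl\|\sum_n c_ne^I_n\Bigr\|_{d(w^I,1)}.
\]
So far-out normalized blocks of your $z$ have first coordinate of norm one and a negligible second coordinate. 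The first case of your own upper-bound argument then gives $\cB(z)=I$. Your construction therefore produces the pair $(I,I)$, never $(I,J)$.

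The fallback of approximating $(0,Je^J_n)$ by blocks of $z$ is excluded by the same inequality, because every block uses identical coefficients in both coordinates. Even granting domination, your lower bound would not follow. Normalized flat averages of the $d(w^J,1)$ basis are not in general equivalent to that basis. For $w^J_n=1/n$ and lengths with $\log m_{k-1}/\log m_k\to0$ they are not even weakly null, so they carry no allowed exponent at all.

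\paragraph{The missing idea.} The paper replaces the second factor by $X=(\bigoplus_k\ell_1^{m_k})_{Z_J}$ from Lemma~\ref{joint:saturation}. It sets $z_n=(e^I_n,\delta_ke^X_{k,i})$ in $V=Z_I\oplus_\infty X$, viewed inside $V^{**}$.
\begin{itemize}
\item Inside each row the norm is $\ell_1$, so the second coordinate of the $k$th row sum has norm $\delta_km_k$. This grows linearly and beats $W_I(m_k)=o(m_k)$.
\item The normalized row averages are therefore within $2^{-k^2}$ of $(0,t_k)$, where $(t_k)$ is isometrically the basis of $Z_J$. This gives $J\subseteq\cB(z)$ without any claim about flat blocks of $Z_J$.
\item The same linear growth destroys every upper $\ell_{q'}$ estimate. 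So $\cP(z)=\varnothing$ comes for free, and no interleaving is needed for proper $J$.
\item Lemma~\ref{joint:saturation} keeps $V$ $\ell_1$-saturated, so (H1)--(H2) survive.
\item The upper bound is then your case split. The $X$-coordinate is reduced to $Z_J$ through the positive row profiles $d_j=\sum_k\|c_j^k\|_1e_k^J$, after passing to a subsequence with disjoint outer supports.
\end{itemize}
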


We give the elementary saturation facts used in the construction.
A Banach space is called $\ell_1$-saturated if every
infinite-dimensional closed subspace contains a copy of $\ell_1$;
this is the hereditary $\ell_1$ property used above.

\begin{lemma}\label{joint:saturation}
Let $Z$ have a normalized $1$-unconditional Schauder basis and be
$\ell_1$-saturated.  For positive integers $m_k$, let
\[
 X=\Bigl(\bigoplus_{k=1}^{\infty}\ell_1^{m_k}\Bigr)_Z,
 \qquad
 \|(a_{k,i})\|_X
 =\Bigl\|\sum_k\Bigl(\sum_{i=1}^{m_k}|a_{k,i}|\Bigr)e_k\Bigr\|_Z,
\]
where $X$ is the completion of the finitely supported arrays.
Then $X$ is $\ell_1$-saturated.  A finite direct sum of
$\ell_1$-saturated spaces is again $\ell_1$-saturated.
\end{lemma}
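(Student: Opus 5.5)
The plan is to use the characterization: an infinite-dimensional Banach space is $\ell_1$-saturated exactly when every normalized block sequence of a basis, or more generally every normalized weakly null-type sequence, has a subsequence equivalent to the $\ell_1$ basis. For $X$ we argue directly with subspaces. Let $V\subset X$ be an infinite-dimensional closed subspace. Denote by $P_N$ the natural projection onto the first $N$ summands $\ell_1^{m_1}\oplus\cdots\oplus\ell_1^{m_N}$. These projections are contractive by $1$-unconditionality of the basis of $Z$. Since $\operatorname{ran}P_N$ is finite dimensional, $V\cap\ker P_N$ is infinite dimensional for every $N$.

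A standard gliding-hump construction then produces normalized $x_j\in V$ and successive finite sets of rows $R_j$ such that $\|x_j-Q_{R_j}x_j\|\le 2^{-j}\varepsilon$, where $Q_R$ restricts to the rows in $R$. Choose $x_j\in V\cap\ker P_{N_j}$ and truncate its tail in norm, using density of finitely supported arrays. By the small-perturbation principle, it suffices to find a copy of $\ell_1$ in $[Q_{R_j}x_j]$.

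Now define $g_j=\sum_{k\in R_j}\bigl(\sum_i|(x_j)_{k,i}|\bigr)e_k\in Z$. These are disjointly supported, and they are seminormalized because $\|g_j\|_Z=\|Q_{R_j}x_j\|_X$. The definition of the norm and unconditionality give, for all scalars $c$,
\[
 \Bigl\|\sum_j c_jQ_{R_j}x_j\Bigr\|_X=\Bigl\|\sum_j |c_j|g_j\Bigr\|_Z=\Bigl\|\sum_j c_jg_j\Bigr\|_Z .
\]
So $(Q_{R_j}x_j)$ is isometrically equivalent to the block sequence $(g_j)$ of $Z$. The closed span $[g_j]$ is an infinite-dimensional subspace of $Z$, hence contains a copy of $\ell_1$. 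Since $[Q_{R_j}x_j]\cong[g_j]$ and is close to a subspace of $V$, the space $V$ contains $\ell_1$.

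The main obstacle is making the perturbation step rigorous. The sequence $(Q_{R_j}x_j)$ is $1$-unconditional basic, so its basis constant is one. With perturbations summing to less than $1/2$, the principle of small perturbations gives an isomorphism $[Q_{R_j}x_j]\to[x_j]\subset V$. This yields $\ell_1\hookrightarrow V$.

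For finite direct sums, let $V\subset X_1\oplus X_2$ be infinite dimensional (induction covers more summands), and let $\pi_1,\pi_2$ be the coordinate projections restricted to $V$. If $\pi_1|_V$ is not strictly singular, it is an isomorphism on some infinite-dimensional $V_0\subset V$. Then $\pi_1(V_0)$ is an infinite-dimensional closed subspace of $X_1$ and contains $\ell_1$, which pulls back to $V_0$. Otherwise $\pi_1|_{V}$ is strictly singular, so there is an infinite-dimensional $V_0\subset V$ with $\|\pi_1|_{V_0}\|<1/2$. On $V_0$ the map $\pi_2$ is then an isomorphism, because $\|\pi_2v\|\ge\|v\|-\|\pi_1v\|$ with equivalent norms, and we conclude as before using $X_2$.
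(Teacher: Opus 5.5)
Your proof is correct and is essentially the paper's argument. For $X$ you use a gliding-hump selection of nearly successive row blocks in $V$, the row-$\ell_1$ norm profiles $g_j$ that make those blocks isometrically equivalent to a disjoint block sequence of $Z$, and a small-perturbation step based on $1$-unconditionality. For finite sums you use the strict-singularity dichotomy; there you cite the standard fact that a strictly singular operator has norm $<1/2$ on some infinite-dimensional subspace, whereas the paper proves it inline with a basic sequence on which $\|P_Au_j\|$ decreases rapidly.

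The only blemish is your opening ``characterization'', which is false as stated. The unit vector basis of $d(w,1)$ is a weakly null normalized block sequence in an $\ell_1$-saturated space, so none of its subsequences is equivalent to the $\ell_1$ basis. You never use that claim, so the proof is unaffected.
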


\begin{proof}
The coordinate spaces of $X$ form a finite-dimensional
$1$-unconditional decomposition.  Given an infinite-dimensional
closed $M\subset X$, the usual finite-codimensional selection
chooses normalized $u_j\in M$ and normalized successive
decomposition blocks $v_j$ with
$\sum_j\|u_j-v_j\|<1/2$: choose $u_j$ annihilating all coordinates
up to the preceding support and then truncate its tail.
The sequence $(v_j)$ is $1$-unconditional.  Its positive norm
profiles
\[
 b_j=\sum_k\|v_j^k\|_1e_k\in Z
\]
are normalized and have disjoint supports, and
\[
 \Bigl\|\sum_ja_jv_j\Bigr\|_X
 =\Bigl\|\sum_ja_jb_j\Bigr\|_Z
 \qquad((a_j)\in c_{00}).
\]
Thus $[v_j]$ is isometric to a subspace of $Z$ and contains
$\ell_1$. To justify the perturbation step directly, put
$\rho=\sum_j\|u_j-v_j\|<1/2$. Since $(v_j)$ is normalized and
$1$-unconditional, $|a_j|\leq\|\sum_i a_iv_i\|_X$ for every finite
coefficient family. Therefore
\[
 \Bigl\|\sum_j a_j(u_j-v_j)\Bigr\|_X
 \leq\rho\Bigl\|\sum_j a_jv_j\Bigr\|_X.
\]
The map $v_j\mapsto u_j$ extends to an isomorphism from $[v_j]$ onto
$[u_j]$, with lower bound $1-\rho>0$. Thus $[u_j]\subset M$ contains
$\ell_1$.

For the second assertion it suffices to consider $A\oplus_\infty B$.
Let $M$ be an infinite-dimensional closed subspace.  If the first
coordinate projection is bounded below on an infinite-dimensional
subspace of $M$, saturation of $A$ gives the conclusion.
Otherwise that projection, denoted by $P_A$, is strictly singular.
Choose inductively normalized $u_j\in M$ such that
$\|P_Au_j\|<2^{-j-4}$. At each stage also require the later vectors
to annihilate a finite set of functionals that norms the preceding
finite-dimensional span within a factor two. At each later step retain
all the finitely many norming sets chosen at earlier steps. Such requirements have
finite codimension, and strict singularity ensures the existence of
a unit vector with arbitrarily small $P_A$-image in each resulting
infinite-dimensional subspace. The partial-sum projections of
$(u_j)$ consequently have norm at most two, and its coordinate
functionals have norm at most four. For a finite sum $u=\sum_ja_ju_j$,
\[
 \|P_Au\|\le\sum_j|a_j|\,\|P_Au_j\|
 \le4\|u\|\sum_j2^{-j-4}<\tfrac12\|u\|.
\]
The same estimate holds on $N=[u_j]$.
Since the ambient norm is the maximum of the two coordinate norms,
the second projection is bounded below on $N$.  Saturation of $B$
completes the proof.
\end{proof}

\begin{proof}[Proof of Theorem~\ref{joint:classification}]
\emph{Proof map.}
Necessity follows from the one-dimensional shape theorem and the obvious
inclusion $\cS\subseteq\cB$.  For realization, two Lorentz models
control the allowed subsequence and block exponents, while a mixed
$\ell_1$-saturated space excludes every forbidden exponent.  The
remaining case $J=(0,\infty)$ is obtained by interleaving a sequence
with empty selection spectrum.

The necessity argument in Theorem~\ref{lor:classification} does
not require the original sequence to be weak-star null: both
selection spectra are upper sets, and any good selection at an
exponent at most one has a further subsequence good at every
positive exponent.  Thus both spectra belong to $\mathfrak A$,
and $\cS(y)\subset\cB(y)$.  For the separate $1$-symmetric
assertion when $I=J$, use the Lorentz models for proper $I$ and
an orthonormal basis for $I=(0,\infty)$.

We first construct the realization with empty $\cP$ when
\[
 I\subseteq J\subsetneq(0,\infty),
\]
allowing equality and the case $J=\varnothing$.

Choose Lorentz spaces $Z_I=d(w^I,1)$ and $Z_J=d(w^J,1)$ from
Theorem~\ref{lor:weight}, with allowed sets $I$ and $J$.
Their coordinate bases are normalized, weakly null and
$1$-symmetric, and both spaces are $\ell_1$-saturated.
Put $W_I(m)=\sum_{n=1}^m w_n^I$.  Since $w_n^I\to0$, one has
$W_I(m)/m\to0$.  With $\delta_k=2^{-k}$ choose positive integers
$m_k$ satisfying
\begin{equation}\label{joint:block-lengths}
 \frac{W_I(m_k)}{\delta_km_k}\leq2^{-k^2}.
\end{equation}
Form the space $X=(\bigoplus_k\ell_1^{m_k})_{Z_J}$ of
Lemma~\ref{joint:saturation}, and put
\[
 V=Z_I\oplus_\infty X,\qquad E=V^*.
\]
We regard $V$ as its canonical image in $E^*=V^{**}$.
By the lemma, $V$ is $\ell_1$-saturated; consequently this canonical
image contains no normalized strongly normal sequence by
Lemma~\ref{pre:nostrong}.

Enumerate the coordinate pairs $(k,i)$, $1\leq i\leq m_k$, in
successive order, writing $n=n(k,i)$.  In $V\subset E^*$ define
\[
 y_n=(e_n^I,0),\qquad
 z_n=(e_n^I,\delta_ke_{k,i}^X)
 \quad(n=n(k,i)).
\]
Both sequences are normalized and $1$-unconditional basic.
The first is weakly null, and
$\|z_n-y_n\|=\delta_k\to0$, so the second is weakly null as well.
In fact, $\cP(z)=\varnothing$.  Since $W_I(m_k)\geq1$,
\eqref{joint:block-lengths} forces $m_k\geq2^{k^2+k}$.
The sum over the $k$th row has norm exactly $\delta_km_k$.
For every fixed $q>1$, its ratio to the $\ell_{q'}$ norm of its
coefficient vector is
\begin{equation}\label{joint:empty-point-spectrum}
 \delta_km_k^{1/q}
 \geq 2^{(k^2+k)/q-k}\longrightarrow\infty.
\end{equation}
Thus $z$ has no upper $\ell_{q'}$ estimate and hence $G_q(z)\ne E$.
There remains only the possibility that $G_q(z)$ might be proper but
dense.  Proposition~\ref{pre:amplification} rules this out: such density
would produce a normalized strongly normal successive block sequence in
the canonical copy of $V$, contradicting Lemma~\ref{pre:nostrong}.
Therefore $q\notin\cP(z)$ for every $q>1$.  For $q\leq1$, density of
$G_q(z)$ would imply density of $G_1(z)$ and hence make $z$ itself strongly
normal, again impossible.  Consequently $\cP(z)=\varnothing$.

To apply Theorem~\ref{lor:weight} to the copy
$Y_I=Z_I\oplus\{0\}\subset V\subset E^*$, note that the canonical copy
of the $\ell_1$-saturated space $V$ contains no normalized strongly normal
sequence by Lemma~\ref{pre:nostrong}; hence neither does $Y_I$.
Moreover, weak-star null sequences in the canonical copy of $V$ are weakly
null in $V$, and therefore in $Y_I$.  Thus hypotheses
\textup{(H1)}--\textup{(H2)} of Theorem~\ref{lor:weight} hold, and the
Lorentz theorem gives $\cS(y)=I$.
The norm-null perturbation invariance of subsequence spectra,
Theorem~\ref{pert:thresholds}(ii), yields
\begin{equation}\label{joint:subsequence-spectrum}
 \cS(z)=I.
\end{equation}
Indeed, any good subsequence can be thinned until the coordinate
errors are $\ell_q$-summable for its specified exponent $q$;
this observation also gives a direct proof of the invocation.

To obtain the desired block exponents, set
\[
 t_k=\frac1{m_k}\sum_{i=1}^{m_k}e_{k,i}^X,
 \qquad
 u_k=\frac1{\delta_km_k}\sum_{i=1}^{m_k}z_{n(k,i)}.
\]
The sequence $(t_k)$ is isometrically equivalent to the coordinate
basis of $Z_J$.  The factor $1/\delta_k$ in the definition of $u_k$
is chosen precisely so that its second coordinate is $t_k$, of norm
one.  Its first coordinate has norm
$W_I(m_k)/(\delta_km_k)\leq2^{-k^2}<1$ by
\eqref{joint:block-lengths}.  Hence $\|u_k\|=1$, so $(u_k)$ is
already a normalized block sequence, and
\[
 \|u_k-(0,t_k)\|\leq2^{-k^2}.
\]
The roles of the parameters and blocks are summarized here:
\begin{center}
\small
\renewcommand{\arraystretch}{1.08}
\begin{tabular}{@{}p{.22\textwidth}p{.73\textwidth}@{}}
\toprule
Choice & Role in the construction\\
\midrule
$\delta_k=2^{-k}$ & Makes $\|z_{n(k,i)}-y_{n(k,i)}\|=\delta_k$ tend to zero,
so the subsequence spectrum is preserved.\\[3pt]
$m_k$ as in \eqref{joint:block-lengths} & Bounds the norm of the first coordinate of $u_k$
by $2^{-k^2}$; since $W_I(m_k)\geq1$, it also forces
$m_k\geq2^{k^2+k}$ and the obstruction \eqref{joint:empty-point-spectrum}.\\[3pt]
$n=n(k,i)$ & Enumerates finite rows successively, so their averages are
successive blocks of $z$.\\[3pt]
$t_k$ & The normalized row average in $X$, realizing the basis of $Z_J$
isometrically.\\[3pt]
$u_k$ & A normalized block of $z$ whose second coordinate is $t_k$ and
whose error from $(0,t_k)$ is at most $2^{-k^2}$.\\
\bottomrule
\end{tabular}
\end{center}

Every $f\in X^*$ restricts along the isometry $e_k^J\mapsto t_k$
to a functional on $Z_J$.  Hence for each $q\in J$ the sequence
$(0,t_k)$ has all-point $q$-summability on $E$.
Its difference from $(u_k)$ belongs to $\ell_q(V)$ for every
$q>0$.  We conclude that $G_q(u)=E$ for every $q\in J$, and
therefore $J\subset\cB(z)$.

It remains to exclude every $q\notin J$.  Suppose that a normalized
block sequence $b_j=(a_j,c_j)$ of $(z_n)$ were $q$-strongly normal.
If $q\leq1$, it would be strongly normal, contrary to the choice
of the canonical space $V$.
Let $q>1$.  The amplification lemma
(Proposition~\ref{pre:amplification}) and the absence of strongly
normal sequences imply $G_q(b)=E$.  Equivalently, for some $C$,
\begin{equation}\label{joint:upper-estimate}
 \Bigl\|\sum_j\lambda_jb_j\Bigr\|_V
 \leq C\|(\lambda_j)\|_{q'}
 \qquad((\lambda_j)\in c_{00}).
\end{equation}

If $\|a_j\|$ is bounded away from zero on a subsequence, pass to it
and choose $\eta>0$ with $\|a_j\|\geq\eta$. Applying the first-coordinate
projection in \eqref{joint:upper-estimate} with coefficients
$\lambda_j/\|a_j\|$ gives
\[
 \Bigl\|\sum_j\lambda_j\frac{a_j}{\|a_j\|}\Bigr\|_{Z_I}
 \leq C\Bigl\|\Bigl(\frac{\lambda_j}{\|a_j\|}\Bigr)\Bigr\|_{q'}
 \leq \frac{C}{\eta}\|(\lambda_j)\|_{q'}.
\]
For each $f\in Z_I^*$, duality between finite-dimensional
$\ell_{q'}$ and $\ell_q$ gives
\[
 \left(\sum_{j=1}^{N}
  \left|f\!\left(\frac{a_j}{\|a_j\|}\right)\right|^q\right)^{1/q}
 \leq\frac{C}{\eta}\|f\| \qquad(N\in\mathbb N).
\]
Letting $N\to\infty$ proves all-point $q$-summability on $Z_I^*$.
Apply the canonical case of Theorem~\ref{lor:weight}, with predual
$Z_I^*$ and embedding $J_{Z_I}:Z_I\to Z_I^{**}$; its hypotheses follow
from Lemma~\ref{pre:nostrong}. Since $q\notin J$ and $I\subseteq J$
imply $q\notin I$, this is a contradiction. No identification of the
whole space $V^*$ with $Z_I^*$ is involved. Consequently $\|a_j\|\to0$.
Since $\|b_j\|=\max(\|a_j\|,\|c_j\|)=1$, we have
$\|c_j\|=1$ eventually.

Each outer coordinate space of $X$ is finite dimensional, and
the supports of $(b_j)$ are successive finite coordinate intervals.
We can therefore pass to a subsequence for which the outer supports of
$(c_j)$ are disjoint: after one block has been chosen, the finitely many
rows it meets contain only finitely many scalar coordinates, so a sufficiently
late successive block starts beyond all those rows. Repeating this choice
gives disjoint outer supports.  Define its positive profiles
$d_j=\sum_k\|c_j^k\|_1 e_k^J$.  These are normalized disjoint
blocks in $Z_J$, and
\[
 \Bigl\|\sum_j\lambda_jd_j\Bigr\|_{Z_J}
 =\Bigl\|\sum_j\lambda_jc_j\Bigr\|_X
 \leq C\|(\lambda_j)\|_{q'}.
\]
This again contradicts the canonical case of Theorem~\ref{lor:weight},
now with predual $Z_J^*$, for the forbidden exponent $q\notin J$.
Thus $\cB(z)\subset J$.  Together with
\eqref{joint:subsequence-spectrum} and
\eqref{joint:empty-point-spectrum}, this proves all assertions
for proper $J$.

Finally let $J=(0,\infty)$.  Choose a normalized weakly null
$1$-unconditional basic sequence $a\subset E_a^*$ with
$\cS(a)=I$ and $\cB(a)=(0,\infty)$: for proper $I$ use
Corollary~\ref{pert:separation}, and for $I=(0,\infty)$ use an
orthonormal basis.  Choose a Lorentz sequence
$b\subset E_b^*$ with
$\cP(b)=\cS(b)=\cB(b)=\varnothing$.
In the dual of $E_a\oplus_1 E_b$ interleave them as
\[
 h_{2n-1}=(a_n,0),\qquad h_{2n}=(0,b_n).
\]
This is again normalized, weakly null and $1$-unconditional basic.
For each $q>0$,
\[
 G_q(h)=G_q(a)\oplus G_q(b),
\]
so $\cP(h)=\cP(a)\cap\cP(b)=\varnothing$.
Clearly $\cS(a)\subset\cS(h)$, by selecting odd terms.
A $q$-strongly normal subsequence of $h$ cannot contain infinitely
many even terms: those terms would themselves be $q$-strongly
normal, and projection onto $E_b$ would contradict
$\cS(b)=\varnothing$.  After deleting finitely many terms it
therefore consists of odd terms, so $q\in\cS(a)$.
Thus $\cS(h)=I$.  Every normalized block sequence of $a$ is a
normalized block sequence of $h$ using zero coefficients at even
positions.  Consequently $\cB(h)\supset\cB(a)=(0,\infty)$,
completing the proof.
\end{proof}

\begin{remark}\label{joint:scope}
Thus even the condition $\cP(y)=\varnothing$ imposes no further
restriction on the admissible pair $(\cS(y),\cB(y))$.
In the construction for proper $J$, all normalized block sequences
are excluded at the forbidden exponents, while the block sequence
$(u_k)$ explicitly realizes every allowed exponent.  Arbitrary
normalized blocks need not preserve the allowed exponents.
\end{remark}

\section{A universal selection model and complemented density defects}
\label{sec:jmsj}
Consider the Banach space
\[
 D=\{s=(s_n)\in\ell_\infty:\Delta s_n=s_n-s_{n-1}\longrightarrow0\},
 \qquad s_0=0,
\]
with the supremum norm. It is closed in $\ell_\infty$, since
$\Delta:\ell_\infty\to\ell_\infty$ is continuous and $c_0$ is closed.
The partial-sum map identifies $D$ isometrically with
\[
 E_0=\left\{x\in c_0:\sup_k\left|\sum_{n=1}^k x_n\right|<\infty\right\},
 \qquad \|x\|_{E_0}=\sup_k\left|\sum_{n=1}^k x_n\right|.
\]
The exponent-one bounded-run criterion for this model was proved in
\cite[pp.~392--394]{sliwa2012}. We retain that model and prove the
quasi-Banach, simultaneous-family, and closed-domain extensions below.
Define
\begin{equation}\label{jmsj:differences}
 y_1(s)=s_1,\qquad y_n(s)=\tfrac12\Delta s_n\quad(n\geq2).
\end{equation}
These are norm-one functionals and form a normal sequence. To see the norm
at $n\geq2$, use a finitely supported $s$ with $s_{n-1}=-1$, $s_n=1$ and
$\|s\|_\infty=1$. Normality follows from $\Delta s_n\to0$.

For an infinite $M\subset\mathbb N$, put
\[
 L(M)=\sup\{m:\{r,r+1,\ldots,r+m-1\}\subset M
                  \text{ for some }r\geq1\}.
\]
The selected evaluation sequence is always enumerated in increasing order.
Thus $G_A(y|M)$ denotes the set of $s\in D$ for which that sequence belongs
to $A$. Coordinate numbers in $M$ are not confused with their positions in
this enumeration.

\subsection{Finite-set estimates without convexity}
A symmetric quasi-Banach sequence ideal will mean a complete quasi-normed
linear space $A$ with
\begin{equation}\label{jmsj:qb-ideals}
 c_{00}\subset A\subsetneq c_0,
\end{equation}
whose quasi-norm is solid and invariant under all coordinate permutations.
Explicitly, $a\in A$ and $|b_n|\le |a_n|$ for every $n$ imply
$b\in A$ and $\|b\|_A\le\|a\|_A$.
The quasi-norm is absolutely homogeneous and vanishes only at zero. Completeness is
with respect to the quasi-norm topology. We use neither convexity nor the
Fatou property. Set
\[
 \varphi_A(k)=\left\|\sum_{i=1}^k e_i\right\|_A,
 \qquad H_A(N)=\sum_{k=1}^N\frac1{\varphi_A(k)}.
\]

\begin{lemma}\label{jmsj:fundamental}
For every such $A$, the function $\varphi_A$ is nondecreasing and tends to
infinity. For every $a\in A$ and every nonempty finite set $F\subset\mathbb N$,
\begin{equation}\label{jmsj:finite-set}
 \sum_{n\in F}|a_n|\leq H_A(|F|)\|a\|_A.
\end{equation}
Furthermore $H_A(N)/N\to0$.
\end{lemma}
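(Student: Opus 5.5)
Monotonicity of \(\varphi_A\) is immediate from solidity: for \(k\le l\), the vector \(\sum_{i\le k}e_i\) is dominated coordinatewise by \(\sum_{i\le l}e_i\). For \(\varphi_A(k)\to\infty\) we argue by contradiction using completeness and properness. Suppose \(\varphi_A(k)\le M\) for all \(k\). Take any \(a\in c_0\) with \(\|a\|_\infty\le1\) and let \(a^*\) be its nonincreasing rearrangement. By symmetry and solidity, the layer-cake decomposition expresses \(a\) through indicator vectors of finite sets. Without convexity, however, we cannot sum those layers with a triangle inequality. So we proceed instead with dyadic levels: let \(F_j=\{n:2^{-j-1}<|a_n|\le2^{-j}\}\), which is finite because \(a\in c_0\). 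Then \(\|a\mathbf 1_{F_j}\|_A\le2^{-j}\varphi_A(|F_j|)\le 2^{-j}M\). The quasi-triangle inequality with constant \(K\) only gives geometric series control after passing to an equivalent \(\rho\)-norm (Aoki--Rolewicz, \(\|\cdot\|^\rho\) subadditive for some \(0<\rho\le1\)). With that norm, \(\sum_j\|a\mathbf 1_{F_j}\|^\rho\le M^\rho\sum_j2^{-j\rho}<\infty\). Completeness then puts the partial sums' limit in \(A\), and that limit equals \(a\) coordinatewise. Coordinate evaluations are continuous on \(A\): solidity gives \(|a_n|\,\|e_1\|_A\le\|a\|_A\), which also shows \(A\hookrightarrow c_0\) continuously. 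Hence \(c_0\subset A\), contradicting properness. This is the one step where completeness is used, and I expect it to be the main delicate point: checking that Aoki--Rolewicz is legitimate here (it needs only the quasi-norm axioms) and that the limit is identified coordinatewise.

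For \eqref{jmsj:finite-set}, fix \(a\in A\) and finite \(F\) with \(|F|=N\). List \(|a_n|\), \(n\in F\), in nonincreasing order as \(\alpha_1\ge\dots\ge\alpha_N\). For each \(k\le N\), the vector with \(k\) coordinates equal to \(\alpha_k\) is dominated, after a permutation, by \(|a|\). Solidity and symmetry therefore give \(\alpha_k\varphi_A(k)\le\|a\|_A\), that is, \(\alpha_k\le\|a\|_A/\varphi_A(k)\). Summing over \(k\le N\) yields \(\sum_{n\in F}|a_n|\le H_A(N)\|a\|_A\). Only single-vector comparisons are used, so no convexity enters.

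Finally, \(H_A(N)/N\) is the Cesàro mean of \(1/\varphi_A(k)\), and \(1/\varphi_A(k)\to0\) by the first part. Cesàro means of a null sequence are null, so \(H_A(N)/N\to0\). We also note that \(\varphi_A(k)>0\), since the quasi-norm vanishes only at zero, so \(H_A\) is well defined.
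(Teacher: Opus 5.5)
Your proof is correct. The finite-set estimate and the Ces\`aro step are exactly the paper's argument. The difference lies in showing $\varphi_A(k)\to\infty$.

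You split $a\in c_0$ into dyadic layers and invoke Aoki--Rolewicz to sum the layer bounds. The paper instead observes that a bound $\varphi_A\le C$ already gives $\|u\|_A\le C\|u\|_\infty$ for \emph{every} finitely supported $u$: dominate $|u|$ by $\|u\|_\infty$ times a permuted indicator of $\operatorname{supp}u$. The difference of two truncations of $x\in c_0$ is finitely supported, with sup norm at most the tail supremum. So the truncations are Cauchy in $A$ directly, with no summation of quasi-norms and no equivalent $\rho$-norm.

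The same remark removes the step you flagged as delicate from your own argument. The block $\sum_{J<j\le J'}a\mathbf 1_{F_j}$ is finitely supported with sup norm at most $2^{-J-1}$, so its quasi-norm is at most $M2^{-J-1}$. Hence your partial sums are Cauchy without any $\rho$-norm.

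Your use of Aoki--Rolewicz is legitimate: it needs only the quasi-norm axioms and gives an equivalent topology, so completeness transfers. But it imports a theorem where a one-line estimate suffices. The paper's route keeps the lemma fully elementary, consistent with its stated avoidance of convexity.
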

\begin{proof}
Solidity proves monotonicity. Suppose $\varphi_A(k)\leq C$ for all $k$.
For every finitely supported $u$, solidity and permutation invariance give
$\|u\|_A\leq C\|u\|_\infty$. Hence the finite truncations of any $x\in c_0$
are Cauchy in $A$. They converge in $A$ by completeness. The estimate
\[
 |a_n|\varphi_A(1)\leq\|a\|_A
\]
shows that all coordinates of the limit equal $x_n$, so $x\in A$.
This would imply $A=c_0$, contrary to \eqref{jmsj:qb-ideals}.

Arrange the finitely many numbers $(|a_n|)_{n\in F}$ in decreasing order
$c_1\geq\cdots\geq c_N$, where $N=|F|$. On the positions of the $k$ largest
of these numbers, the constant $c_k$ is dominated by $|a|$. Therefore
$c_k\varphi_A(k)\leq\|a\|_A$. Summing these inequalities yields
\eqref{jmsj:finite-set}. This argument needs only finite masks; no global
rearrangement assumption beyond permutation invariance is used.
Finally, $1/\varphi_A(k)\to0$, and hence its arithmetic means tend to zero.
\end{proof}

\begin{remark}
For a Banach ideal the sharper estimate
\[
 \sum_{n\in F}|a_n|
 \leq\frac{|F|}{\varphi_A(|F|)}\|a\|_A
\]
follows by convex averaging of finite permutations. That sharper estimate
is not used for quasi-norms. The weaker bound
\eqref{jmsj:finite-set} is sufficient for every argument below.
\end{remark}

\subsection{The bounded-run case}
\begin{proposition}\label{jmsj:good}\label{jmsj:c0quotient}
Suppose $L=L(M)<\infty$. Then the subspace
\[
 D_{\mathrm{fin}}(M)=\{s\in D:y_n(s)=0
           \text{ for all but finitely many }n\in M\}
\]
is dense in $D$. In particular, for every nonempty family $\mathscr A$ of
sequence ideals containing $c_{00}$, with no restriction on its cardinality,
\[
 \overline{\bigcap_{A\in\mathscr A}G_A(y|M)}^{\,D}=D.
\]
Let $K_M=\bigcap_{n\in M}\ker y_n$. The operator
\[
 \Gamma_M:D\longrightarrow c_0(M),\qquad
 \Gamma_M(s)=(y_n(s))_{n\in M},
\]
has a bounded linear right inverse $R_M$ with $\|R_M\|\leq2L$.
Consequently $D/K_M$ is isomorphic to $c_0(M)$, and the whole selected
sequence is weak-star Schauder basic.
\end{proposition}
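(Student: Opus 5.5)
The plan is to construct the right inverse $R_M$ explicitly and then derive every other assertion from the bounded projection $P_M=R_M\Gamma_M$.

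\emph{Construction of $R_M$.} First note that $\Gamma_M$ maps $D$ into $c_0(M)$, since $y_n(s)\to0$ for every $s\in D$. Write $M$ as a union of maximal runs $\{r,\dots,r+\ell-1\}$ with $\ell\le L$. Because each run is finite, the index $r+\ell$ immediately after a run does not belong to $M$.

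For $c\in c_0(M)$, define $s=R_Mc$ by prescribing its differences $\Delta s_n$, with $s_0=0$:
\begin{itemize}
\item on $M$, put $\Delta s_n=2c_n$ for $n\ge2$, and $\Delta s_1=c_1$ if $1\in M$;
\item at the index $r+\ell$ following a run, put $\Delta s_{r+\ell}$ equal to minus the sum of the differences prescribed along that run, so that $s$ returns to $0$;
\item at all other indices, put $\Delta s_n=0$.
\end{itemize}
This gives $y_n(s)=c_n$ for $n\in M$. Within a run, $|s_n|\le 2\ell\|c\|_\infty\le 2L\|c\|_\infty$, and $s$ vanishes outside the runs, so $\|s\|_\infty\le 2L\|c\|_\infty$.

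The correcting differences have absolute value at most $2L\max\{|c_k|: k \text{ in the run}\}$. This tends to zero because $c\in c_0(M)$ and the runs move to infinity. Hence $\Delta s_n\to0$, so $s\in D$. The map $R_M$ is clearly linear, it satisfies $\Gamma_MR_M=\mathrm{id}$, and $\|R_M\|\le2L$. I expect the only real care to be needed here, namely in checking membership in $D$ and the bookkeeping of the constant $2L$, including the exceptional index $n=1$ where the factor is $1$ instead of $2$.

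\emph{Consequences.} The operator $P_M=R_M\Gamma_M$ is a bounded projection with kernel $K_M$. Therefore $D=K_M\oplus R_Mc_0(M)$, and $\Gamma_M$ induces an isomorphism $D/K_M\cong c_0(M)$.

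For density of $D_{\mathrm{fin}}(M)$, take $s\in D$ and let $c^{(N)}$ be the truncation of $\Gamma_Ms$ to its first $N$ coordinates. Put
\[
 s^{(N)}=(s-P_Ms)+R_Mc^{(N)}.
\]
Then $\Gamma_M s^{(N)}=c^{(N)}$ is finitely supported, so $s^{(N)}\in D_{\mathrm{fin}}(M)$. Moreover $\|s-s^{(N)}\|\le 2L\|\Gamma_Ms-c^{(N)}\|_\infty\to0$.

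Since every ideal in $\mathscr A$ contains $c_{00}$, we have $D_{\mathrm{fin}}(M)\subset\bigcap_{A\in\mathscr A}G_A(y|M)$. Hence this intersection is dense, with no restriction on the cardinality of $\mathscr A$.

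\emph{Weak-star basicness.} The adjoint $\Gamma_M^*:\ell_1(M)\to D^*$ sends $e_n$ to $y_n$. Because $\Gamma_M$ is onto, $\Gamma_M^*$ is an isomorphic embedding, and it is weak-star continuous. Its range is $K_M^\perp$, which is weak-star closed. This range is the weak-star closed span of $(y_n)_{n\in M}$, and on it $(y_n)_{n\in M}$ is a basis equivalent to the unit vector basis of $\ell_1$.

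The coefficient functionals are evaluations at the points $R_Me_n\in D$, because $y_m(R_Me_n)=\delta_{mn}$ for $m,n\in M$. Evaluations at points of $D$ are weak-star continuous, so the whole selected sequence is weak-star Schauder basic.
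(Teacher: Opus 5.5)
Your proof is correct, and its two main ingredients coincide with the paper's. Your $R_M$, described through prescribed increments, is exactly the paper's operator: zero off $M$, with partial sums $\sum_{i=r}^n\gamma_ic_i$ along each run. Your weak-star argument through $\Gamma_M^*:\ell_1(M)\to K_M^\perp$ is also the paper's, made slightly more explicit by exhibiting the coefficient functionals as evaluations at the points $R_Me_n\in D$.

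Where you differ is the density of $D_{\mathrm{fin}}(M)$. The paper proves it first and directly: it flattens $s$ to the constant $s_{r-1}$ on every run beyond a cutoff. It then bounds the uniform change by $L\eta_N$ and writes each reconnecting increment as a sum of at most $L+1$ old increments. You instead derive density afterwards from the splitting $D=K_M\oplus R_Mc_0(M)$, using the approximants $s^{(N)}=s-R_M(\Gamma_Ms-c^{(N)})$.

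Since $\gamma_iy_i(s)=\Delta s_i$ for every $i$, your approximants are in fact exactly the paper's flattened sequences, apart from where the cutoff falls inside a run. Your route absorbs both of the paper's estimates into the single bound $\|R_M\|\le2L$. The paper's route, by contrast, establishes density without needing the right inverse at all.
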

\begin{proof}
For $s\in D$, let $\eta_N=\sup_{n\geq N}|\Delta s_n|$, which tends to zero.
On every maximal run $\{r,\ldots,r+\ell-1\}\subset M$ with $r>N$, replace
all values of $s$ by $s_{r-1}$. At the next index, which is outside $M$,
retain the original value of $s$. The uniform change is at most $L\eta_N$.
Each new reconnecting increment is a sum of at most $L+1$ old increments,
so it tends to zero in the tail. The resulting sequence belongs to $D$ and
has only finitely many nonzero selected increments. These approximations
prove density and the common-domain assertion.

To construct $R_M$, take $a=(a_n)_{n\in M}\in c_0(M)$. Set $s_n=0$ outside
$M$. On a maximal run $\{r,\ldots,r+\ell-1\}$ define
\[
 s_n=\sum_{i=r}^n\gamma_i a_i\quad(r\leq n<r+\ell),
 \qquad \gamma_1=1,\quad \gamma_i=2\ (i\geq2).
\]
Then $\|s\|_\infty\leq2L\|a\|_\infty$ and $s\in c_0\subset D$.
At every selected coordinate $y_n(s)=a_n$. This defines the required linear
right inverse. The induced map $D/K_M\to c_0(M)$ is an isomorphism.
Moreover, $\Gamma_M^*:\ell_1(M)\to D^*$ is an isomorphic embedding onto
the weak-star closed annihilator $K_M^\perp$ and sends the coordinate
basis to $(y_n)_{n\in M}$. The weak-star topology on this range is the one
coming from $c_0(M)$, so the canonical basis expansions give the asserted
weak-star Schauder basicness.
\end{proof}

\subsection{Oscillation packages and a contractive left inverse}
The next lemma separates the geometric construction from all summability
estimates. It will also be used to distinguish different closed domains.

\begin{lemma}\label{jmsj:oscillations}
Suppose $L(M)=\infty$. Let $m_j\to\infty$ and $r_j\geq1$ be sequences of
positive integers. There are a linear isometry
$J:\ell_\infty\to D$, functionals $\tau_j\in D^*$ with $\|\tau_j\|\leq1$,
and a linear contraction $T:D\to\ell_\infty$ such that $TJ=I_{\ell_\infty}$.
They can be chosen with the following properties.
\begin{enumerate}[label=(\roman*)]
\item For each $j$, $\tau_j$ averages $r_j$ endpoint differences along
pairwise disjoint rises of length $m_j$, all on selected edges $n\in M$
with $n\geq2$.
\item If $\tau_j(s)\to0$, then $Ts=0$.
\item For every symmetric quasi-Banach ideal $A$ and $s\in G_A(y|M)$,
\begin{equation}\label{jmsj:tau-ideal}
 |\tau_j(s)|\leq\frac{H_A(m_jr_j)}{r_j}
                     \|(y_n(s))_{n\in M}\|_A.
\end{equation}
\item For $1\leq p<\infty$ and $s\in G_p(y|M)$,
\begin{equation}\label{jmsj:tau-p}
 |\tau_j(s)|\leq m_j^{1-1/p}r_j^{-1/p}
                     \|(y_n(s))_{n\in M}\|_p.
\end{equation}
\end{enumerate}
The projection $JT$ has norm one.
\end{lemma}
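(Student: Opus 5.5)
The plan is to build everything from long runs of $M$, which exist because $L(M)=\infty$. Fix a partition $\N=\bigcup_k N_k$ into infinite sets and a free ultrafilter $\mathcal U$ on $\N$. Inductively, for $j=1,2,\dots$, choose a \emph{package} $P_j$ consisting of $r_j$ pairwise disjoint rises
\[
 R_{j,\ell}=\{b_{j,\ell}-m_j+1,\dots,b_{j,\ell}\}\subset M\cap[2,\infty),\qquad 1\le\ell\le r_j .
\]
This is possible because $M$ contains runs of every length, and we can take arbitrarily late ones. Consecutive rises, and consecutive packages, will be separated by gaps of length at least $\max(m_j,m_{j+1})$. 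Put
\[
 \tau_j(s)=\frac1{2r_j}\sum_{\ell=1}^{r_j}\bigl(s_{b_{j,\ell}}-s_{b_{j,\ell}-m_j}\bigr)
 =\frac1{r_j}\sum_{\ell=1}^{r_j}\sum_{n\in R_{j,\ell}}y_n(s).
\]
The first expression gives $\|\tau_j\|\le1$, since each endpoint difference is at most $2\|s\|_\infty$; this is (i).

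The estimates come next. By the second expression, $|\tau_j(s)|\le r_j^{-1}\sum_{n\in P_j}|y_n(s)|$, where $|P_j|=m_jr_j$. Lemma~\ref{jmsj:fundamental} applied to the vector $(y_n(s))_{n\in M}$ and the finite set $P_j$ gives \eqref{jmsj:tau-ideal}. Here $P_j$ is read as positions in the increasing enumeration of $M$, which is harmless because the ideal quasi-norm is permutation invariant. H\"older's inequality on $m_jr_j$ terms gives $\sum_{P_j}|y_n(s)|\le(m_jr_j)^{1-1/p}\|(y_n(s))\|_p$, and dividing by $r_j$ yields \eqref{jmsj:tau-p}.

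For $J$, take $t\in\ell_\infty$ and assign the value $t_k$ to every package $P_j$ with $j\in N_k$. Define $Jt$ to be the piecewise linear sequence which
\begin{itemize}
\item starts at $0$,
\item on each rise $R_{j,\ell}$ increases linearly from $-t_k$ to $t_k$,
\item in each gap moves linearly from the last value to the next required starting value,
\item after all packages has been built, has nothing left over, since the packages exhaust the tail.
\end{itemize}
Then $Jt$ is linear in $t$, and $\|Jt\|_\infty\le\|t\|_\infty$ with equality, because the values $\pm t_k$ are attained. Every increment in or after package $j$ is at most $2\|t\|_\infty/\min(m_j,m_{j+1})\to0$, so $Jt\in D$ and $J$ is a linear isometry. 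By construction $\tau_j(Jt)=t_k$ for $j\in N_k$.

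Finally set
\[
 (Ts)_k=\lim_{\mathcal U,\ j\in N_k}\tau_j(s),
\]
using the ultrafilter restricted to $N_k$ transported by an enumeration. $T$ is linear, and $\|T\|\le1$ because $\|\tau_j\|\le1$. We get $TJ=I_{\ell_\infty}$ since $\tau_j(Jt)=t_k$ is constant on $N_k$. If $\tau_j(s)\to0$, every ultralimit vanishes, which is (ii). Then $JT$ is a projection, being idempotent since $TJ=I$. It has norm at most $1$ as a contraction composed with an isometry, and norm at least $1$ since it is nonzero and idempotent.

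The step needing most care is the gap design for $J$. The reconnecting segments must keep increments tending to zero uniformly in $\|t\|$, and they must stay linear in $t$. Choosing gap lengths at least $\max(m_j,m_{j+1})$ before fixing later packages handles this, since gap edges are never counted by any $\tau_j$. The ultrafilter device is what reconciles $TJ=I$ with property (ii); the naive choice $Ts=(\tau_j(s))_j$ would fail (ii).
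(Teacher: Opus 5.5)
Your proof is correct and is essentially the paper's construction: you place disjoint rises of length $m_j$ on late runs of $M$, take $\tau_j$ as the averaged endpoint differences estimated by Lemma~\ref{jmsj:fundamental} and H\"older, let $J$ put $t_k$ on all packages indexed by an infinite fibre, and let $T$ be a generalized limit along each fibre. The differences are cosmetic. You use a free-ultrafilter limit where the paper extends $\lim$ from $c$ by Hahn--Banach, and you join all rises into one interpolated path instead of the paper's zero-ended bumps $h_j$ lying wholly in $M$; the paper reuses the explicit slopes of those bumps later in \eqref{jmsj:energy}. Also, since $m_j$ need not be monotone, your increment bound for everything from package $j$ onward should use $\inf_{i\ge j}m_i$ rather than $\min(m_j,m_{j+1})$.
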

\begin{proof}
Arbitrarily long intervals contained in $M$ occur arbitrarily far out.
Indeed, deleting any fixed finite initial interval cannot bound all run
lengths if $L(M)=\infty$. Choose disjoint selected edge intervals
\[
 I_j=\{a_j+1,\ldots,a_j+2m_j(r_j+1)\}\subset M,
 \qquad a_j\geq1,
\]
with disjoint vertex intervals and a gap between successive ones.
On the vertices of the $j$th interval construct $h_j$ by starting at zero,
descending to $-1$ in $m_j$ edges, making $r_j$ complete rises from $-1$ to
$1$ and falls back to $-1$, each rise and fall taking $m_j$ edges, and
returning to zero in $m_j$ edges. Extend $h_j$ by zero. Thus
\[
 \|h_j\|_\infty=1,\qquad
 |\Delta h_j(n)|\leq2/m_j.
\]
The total number of edges is exactly $2m_j(r_j+1)$.

Choose a map $\kappa:\mathbb N\to\mathbb N$ with every fibre infinite;
for example, $\kappa(j)=1+\nu_2(j)$, where $\nu_2(j)$ is the largest
integer $v$ such that $2^v$ divides $j$. For $t\in\ell_\infty$ define
\[
 (Jt)_n=\sum_{j\geq1}t_{\kappa(j)}h_j(n).
\]
This is a pointwise definition on disjoint supports, not a claim of
norm convergence of a series. Its supremum norm is $\|t\|_\infty$.
On the $j$th interval its increments are at most
$2\|t\|_\infty/m_j$ in absolute value. They therefore tend to zero,
so $Jt\in D$ and $J$ is a linear isometry.

Let $u_{j,l}$ and $v_{j,l}=u_{j,l}+m_j$, $1\leq l\leq r_j$, be the
starting and ending vertices of the rises in the $j$th package. Put
\begin{equation}\label{jmsj:tau}
 \tau_j(s)=\frac1{2r_j}\sum_{l=1}^{r_j}
                         (s_{v_{j,l}}-s_{u_{j,l}}).
\end{equation}
Each endpoint difference is bounded by $2\|s\|_\infty$, so
$\|\tau_j\|\leq1$. Also $\tau_j(Jt)=t_{\kappa(j)}$.
Let $F_j$ be the union of the $m_jr_j$ rising edges. Since these edges
have indices at least two, \eqref{jmsj:differences} gives
\begin{equation}\label{jmsj:tau-edge}
 |\tau_j(s)|\leq\frac1{2r_j}\sum_{n\in F_j}|\Delta s_n|
               =\frac1{r_j}\sum_{n\in F_j}|y_n(s)|.
\end{equation}
Apply Lemma~\ref{jmsj:fundamental} to the positions of $F_j$ in the
increasing enumeration of $M$ to get \eqref{jmsj:tau-ideal}.
H\"older's inequality, including the direct $\ell_1$ estimate at $p=1$,
gives \eqref{jmsj:tau-p}.

It remains to construct one bounded left inverse. Let
$\Lambda:\ell_\infty\to\mathbb R$ be a norm-one linear extension of the
usual limit functional on the space $c$ of convergent sequences.
Such an extension exists by the Hahn--Banach theorem. For each $k$ enumerate
$\kappa^{-1}(k)$ increasingly as $j(k,1),j(k,2),\ldots$ and define
\[
 (Ts)_k=\Lambda\bigl((\tau_{j(k,l)}(s))_{l\geq1}\bigr).
\]
Then $\|Ts\|_\infty\leq\|s\|_\infty$ and $T(Jt)=t$.
If $\tau_j(s)\to0$, each sequence to which $\Lambda$ is applied is null,
so $Ts=0$. Finally $(JT)^2=JT$, and $JT$ is a nonzero contraction fixing
$J\ell_\infty$; hence it is a projection of norm one.
\end{proof}

\begin{remark}
The operator $T$ uses one fixed generalized limit; ordinary limits of
$\tau_{j(k,l)}(s)$ need not exist for general $s\in D$. Thus the right
inverse $J$ and the finite averages are explicit, while the bounded
projection uses Hahn--Banach. No canonical choice of projection is asserted.
\end{remark}

\subsection{One projection for a small family of ideals}
Write $f\leq^*g$ if $f(j)\leq g(j)$ for all sufficiently large $j$.
The bounding number $\mathfrak b$ is the least cardinality of an unbounded
family in $(\mathbb N^{\mathbb N},\leq^*)$. In particular, every family of
fewer than $\mathfrak b$ functions has an eventual upper bound. For a
countable family, such a bound is given directly by diagonal maxima.
The same domination principle occurs in \cite[Corollary~11 and its
proof]{survey2018}, but the projection constructed below is a different
application of it.

\begin{theorem}\label{jmsj:common-projection}
Let $\mathscr A$ be a nonempty family of symmetric quasi-Banach sequence
ideals satisfying \eqref{jmsj:qb-ideals}, and assume $|\mathscr A|<\mathfrak b$.
If $L(M)=\infty$, there are a linear isometry $J:\ell_\infty\to D$ and
a norm-one operator $T:D\to\ell_\infty$ such that
\begin{equation}\label{jmsj:splitting}
 TJ=I_{\ell_\infty},\qquad
 T(G_A(y|M))=\{0\}\quad(A\in\mathscr A).
\end{equation}
Let
\begin{equation}\label{jmsj:common-defect}
 F_{\mathscr A,M}=
 \overline{\operatorname{span}\bigcup_{A\in\mathscr A}G_A(y|M)}^{\,D}.
\end{equation}
Then $J\ell_\infty$ is a norm-one complemented subspace of $D$ with trivial intersection
with $F_{\mathscr A,M}$, and
\begin{equation}\label{jmsj:distance}
 \operatorname{dist}(Jt,F_{\mathscr A,M})=\|t\|_\infty
 \quad(t\in\ell_\infty).
\end{equation}
Moreover, $D/F_{\mathscr A,M}$ contains a norm-one complemented isometric
copy of $\ell_\infty$. In particular its density character is $\mathfrak c$.
\end{theorem}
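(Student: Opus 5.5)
The plan is to apply Lemma~\ref{jmsj:oscillations} with one fixed choice of package parameters $(m_j,r_j)$ that makes $\tau_j(s)\to0$ for every $s\in G_A(y|M)$ and every $A\in\mathscr A$ simultaneously. By Lemma~\ref{jmsj:oscillations}(ii), $T$ then kills each $G_A(y|M)$. By estimate \eqref{jmsj:tau-ideal}, it suffices to arrange
\[
 \frac{H_A(m_jr_j)}{r_j}\longrightarrow0\qquad(A\in\mathscr A).
\]
Take $m_j=j$. For each $A$, Lemma~\ref{jmsj:fundamental} gives $H_A(N)/N\to0$. Hence for each $j$ there is a least integer $f_A(j)$ such that
\[
 \frac{H_A(jr)}{r}=j\cdot\frac{H_A(jr)}{jr}\le\frac1j
 \qquad\text{for all } r\ge f_A(j).
\]
Since $|\mathscr A|<\mathfrak b$, the family $\{f_A\}$ has an eventual upper bound $g$. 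Set $r_j=\max\{g(j),1\}$. Then for each $A$ we get $H_A(m_jr_j)/r_j\le 1/j$ for all large $j$. This step is the only real difficulty. The point to check is that the threshold $f_A(j)$ exists as a "for all $r\ge$" bound. This follows from $H_A(N)/N\to0$: choose $N_0$ with $H_A(N)/N\le j^{-2}$ for $N\ge N_0$, and let $f_A(j)=\lceil N_0/j\rceil$.

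With $J,T,\tau_j$ from Lemma~\ref{jmsj:oscillations}, we have $TJ=I$ and $\|T\|\le1$. Since $T\ne0$, in fact $\|T\|=1$. For $s\in G_A(y|M)$ the sequence $(y_n(s))_{n\in M}$ lies in $A$, so $\tau_j(s)\to0$ and $Ts=0$. This proves \eqref{jmsj:splitting}.

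By linearity and continuity, $T$ vanishes on the closed span $F_{\mathscr A,M}$. Then for $f\in F_{\mathscr A,M}$,
\[
 \|t\|_\infty=\|T(Jt-f)\|_\infty\le\|Jt-f\|.
\]
Taking $f=0$ gives the reverse inequality, which yields \eqref{jmsj:distance} and trivial intersection. The projection $JT$ has norm one by the lemma.

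For the quotient, let $Q:D\to D/F_{\mathscr A,M}$ be the quotient map. By \eqref{jmsj:distance}, $QJ$ is an isometry. Since $T$ vanishes on $F_{\mathscr A,M}$, it factors as $\widetilde T Q$ with $\|\widetilde T\|\le1$. Then $\widetilde T\,QJ=I$, so $QJ\widetilde T$ is a norm-one projection onto an isometric copy of $\ell_\infty$. For the density character, note that $D\subset\ell_\infty$ has cardinality at most $\mathfrak c$, so $\operatorname{dens}(D/F_{\mathscr A,M})\le\mathfrak c$. On the other hand, $\ell_\infty$ has density $\mathfrak c$ and embeds isometrically in the quotient. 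Hence the density character equals $\mathfrak c$.
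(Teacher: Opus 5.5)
Your proposal is correct and follows essentially the same route as the paper: take $m_j=j$, use $H_A(N)/N\to0$ from Lemma~\ref{jmsj:fundamental} to define thresholds $f_A$, dominate all of them by one function using $|\mathscr A|<\mathfrak b$, and then obtain the splitting, the distance formula, the induced quotient projection, and the density bounds from Lemma~\ref{jmsj:oscillations}. The differences are only cosmetic: you use decay $1/j$ where the paper uses $2^{-j}$, and you do not require $r_j$ to be increasing, which the lemma does not need anyway.
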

\begin{proof}
Take $m_j=j$. For each $A\in\mathscr A$, Lemma~\ref{jmsj:fundamental}
provides a function $f_A:\mathbb N\to\mathbb N$ such that
\[
 r\geq f_A(j)\quad\Longrightarrow\quad
 \frac{H_A(jr)}r\leq2^{-j}.
\]
Indeed, for fixed $j$ the ratio is
$jH_A(jr)/(jr)$ and tends to zero as $r\to\infty$.
Choose $f$ eventually dominating every $f_A$, and choose increasing
positive integers $r_j\geq f(j)$. Apply Lemma~\ref{jmsj:oscillations} to
these parameters. For a fixed $A\in\mathscr A$ and $s\in G_A(y|M)$,
\eqref{jmsj:tau-ideal} gives, for all sufficiently large $j$,
\[
 |\tau_j(s)|\leq2^{-j}\|(y_n(s))_{n\in M}\|_A.
\]
Consequently $Ts=0$. The exceptional initial indices may depend on $A$;
the operators $J,T$ do not depend on the choice of $A$ or $s$.
This proves \eqref{jmsj:splitting}.

Continuity of $T$ gives $F_{\mathscr A,M}\subset\ker T$.
For $u\in F_{\mathscr A,M}$,
\[
 \|Jt-u\|_\infty\geq\|T(Jt-u)\|_\infty=\|t\|_\infty.
\]
The choice $u=0$ proves \eqref{jmsj:distance}. The projection $JT$ has
norm one by Lemma~\ref{jmsj:oscillations}.

Let $q:D\to D/F_{\mathscr A,M}$ be the quotient map.
The map $\overline J=qJ$ is an isometry by \eqref{jmsj:distance}, and
$T$ induces $\overline T:D/F_{\mathscr A,M}\to\ell_\infty$ by
$\overline T(qs)=Ts$. This map is well defined and contractive, since
\[
 \|Ts\|_\infty=\|T(s-u)\|_\infty\le\|s-u\|_\infty
 \quad(u\in F_{\mathscr A,M}),
\]
and taking the infimum over $u$ gives $\|\overline T(qs)\|\le\|qs\|$.
Moreover $\overline T\,\overline J=I$, so
$\overline J\,\overline T$ is a norm-one projection onto
$\overline J\ell_\infty$.
The quotient therefore has density at least that of $\ell_\infty$, namely
$\mathfrak c$. The reverse bound follows from $D\subset\mathbb R^{\mathbb N}$,
whose cardinality is $\mathfrak c$.
\end{proof}

\begin{corollary}\label{jmsj:ideal}
For an infinite $M\subset\mathbb N$ the following alternatives hold.
If $L(M)<\infty$, then the intersection of the domains for any family of
ideals containing $c_{00}$ is dense. If $L(M)=\infty$, then for every
nonempty family of proper symmetric quasi-Banach ideals of cardinality
less than $\mathfrak b$, even the closed linear span of their domains is
proper, and its quotient in $D$ has a norm-one complemented isometric copy
of $\ell_\infty$.
In particular, for a single such ideal,
\[
 \overline{G_A(y|M)}^{\,D}=D\quad\Longleftrightarrow\quad L(M)<\infty.
\]
\end{corollary}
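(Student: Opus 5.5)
The corollary combines Proposition~\ref{jmsj:good} and Theorem~\ref{jmsj:common-projection}, so the plan is to treat the two cases for $L(M)$ separately and then read off the single-ideal equivalence.

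\emph{Bounded runs.} If $L(M)<\infty$, I would invoke Proposition~\ref{jmsj:good} directly. It shows that $D_{\mathrm{fin}}(M)$ is dense in $D$. Every $s\in D_{\mathrm{fin}}(M)$ has a finitely supported selected evaluation sequence $(y_n(s))_{n\in M}$, so it lies in $G_A(y|M)$ for every ideal $A\supset c_{00}$. Hence $D_{\mathrm{fin}}(M)\subset\bigcap_{A\in\mathscr A}G_A(y|M)$, and this intersection is dense. No cardinality restriction enters, because a single dense subspace serves all ideals at once.

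\emph{Unbounded runs.} If $L(M)=\infty$ and $\mathscr A$ is a nonempty family of proper symmetric quasi-Banach ideals with $|\mathscr A|<\mathfrak b$, I would apply Theorem~\ref{jmsj:common-projection}. By \eqref{jmsj:distance}, $\operatorname{dist}(Jt,F_{\mathscr A,M})=\|t\|_\infty>0$ for every $t\ne0$. Therefore $F_{\mathscr A,M}$, the closed linear span of the domains, is a proper subspace of $D$. The same theorem supplies the norm-one complemented isometric copy $\overline J\ell_\infty$ in the quotient $D/F_{\mathscr A,M}$.

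\emph{Single ideal.} For one ideal $A$, take $\mathscr A=\{A\}$; then $|\mathscr A|=1<\mathfrak b$. If $L(M)<\infty$, the first case gives density of $G_A(y|M)$. If $L(M)=\infty$, the second case gives $\overline{G_A(y|M)}\subset F_{\{A\},M}\ne D$. These two implications together give the stated equivalence.

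I see no real obstacle; the only point to check is the hypotheses. Theorem~\ref{jmsj:common-projection} needs the ideals to satisfy \eqref{jmsj:qb-ideals}, namely that they are proper and contain $c_{00}$. The first case needs only $c_{00}\subset A$. Both conditions are part of the corollary's assumptions.
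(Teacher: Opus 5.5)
Your proposal is correct and follows the paper's own proof, which combines Proposition~\ref{jmsj:good} (the dense subspace $D_{\mathrm{fin}}(M)$ lies in every domain when $L(M)<\infty$) with Theorem~\ref{jmsj:common-projection} (the distance identity and the induced norm-one projection when $L(M)=\infty$). Your specialization to the singleton family $\{A\}$ gives the final equivalence in the same way.
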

\begin{proof}
Combine Proposition~\ref{jmsj:good} and
Theorem~\ref{jmsj:common-projection}.
\end{proof}

\begin{corollary}\label{jmsj:all-exponents}
If $L(M)=\infty$ and
\[
 F_{<\infty,M}=
 \overline{\bigcup_{p>0}G_p(y|M)}^{\,D},
\]
then $D/F_{<\infty,M}$ contains a norm-one complemented isometric copy
of $\ell_\infty$ and has density character $\mathfrak c$.
\end{corollary}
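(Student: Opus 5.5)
The plan is to reduce the corollary to Theorem~\ref{jmsj:common-projection} applied to a countable family of ideals, namely $\mathscr A=\{\ell_p:p\in\N\}$. The family is countable, so $|\mathscr A|<\mathfrak b$ holds without any set-theoretic assumption; by the remark preceding the theorem, an eventual bound for countably many functions is obtained from diagonal maxima.

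First check that each $\ell_p$ with $p\in\N$ is a symmetric quasi-Banach ideal in the sense of \eqref{jmsj:qb-ideals}. It is complete in its (quasi-)norm, solid and permutation invariant, and satisfies $c_{00}\subset\ell_p\subsetneq c_0$. Although only integer $p\ge1$ are needed, this holds for every $p>0$, since $\ell_p$ is a $p$-normed complete space for $p<1$.

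Next, compare the closed domains. The domains $G_p(y|M)$ increase with $p$, and every $p>0$ lies below some integer. Hence
\[
 \bigcup_{p>0}G_p(y|M)=\bigcup_{p\in\N}G_p(y|M),
\]
and this union is already a linear subspace. Therefore $F_{<\infty,M}=F_{\mathscr A,M}$ exactly, with $F_{\mathscr A,M}$ as in \eqref{jmsj:common-defect}; the span in \eqref{jmsj:common-defect} adds nothing.

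Finally, Theorem~\ref{jmsj:common-projection}, applied with $L(M)=\infty$, yields a norm-one complemented isometric copy of $\ell_\infty$ in $D/F_{\mathscr A,M}$. It also yields density character $\mathfrak c$: the lower bound comes from the copy of $\ell_\infty$, and the upper bound from $|D|\le\mathfrak c$.

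There is no real obstacle here. The only point requiring care is the identification of the union over all real exponents with the countable union, which uses monotonicity of the domains. If one prefers to avoid quasi-norms entirely, Lemma~\ref{jmsj:oscillations}(iv) with $r_j$ growing fast enough that $m_j^{1-1/p}r_j^{-1/p}\to0$ for every integer $p$ (for example $r_j\ge m_j^{j}2^{j^2}$) gives a direct alternative, combined with Lemma~\ref{jmsj:oscillations}(ii).
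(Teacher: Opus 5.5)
Your proposal is correct and follows the paper's own proof: restrict to the countable family $(\ell_k)_{k\ge1}$, use monotonicity of the domains to identify $F_{<\infty,M}$ with $F_{\mathscr A,M}$, and apply Theorem~\ref{jmsj:common-projection}. The alternative you sketch via Lemma~\ref{jmsj:oscillations}(ii),(iv), with rapidly growing $r_j$, is also valid; it amounts to the countable-family remark after that theorem specialized to $\ell_p$.
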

\begin{proof}
The family $(\ell_k)_{k\geq1}$ is countable, and
$\bigcup_{p>0}\ell_p=\bigcup_{k\geq1}\ell_k$.
The corresponding domains increase with $p$, so their union is linear.
Theorem~\ref{jmsj:common-projection} applies.
\end{proof}

\begin{remark}
For a countable family $\mathscr A=\{A_1,A_2,\ldots\}$ there is no need
to mention cardinal invariants in the proof: at step $j$, choose $r_j$
large enough that
\[
 \max_{1\leq k\leq j}\frac{H_{A_k}(jr_j)}{r_j}\leq2^{-j}.
\]
The conclusions are stronger than separate nondensity of every domain,
stronger than nondensity of their union, and stronger than a single ball
avoiding that union. They concern its entire closed linear span.
For $\|t\|_\infty=1$, \eqref{jmsj:distance} gives the exact distance $1$.
The bound $|\mathscr A|<\mathfrak b$ additionally permits uncountable
families of ideals of size below $\mathfrak b$, when such cardinalities
exist. It provides one common projection, not separate projections for
individual ideals. If $\mathfrak b=\aleph_1$, the permitted families
are precisely the at most countable ones. No sharpness assertion at the
cardinality $\mathfrak b$ is made.
\end{remark}

\subsection{Two-sided complemented jumps in the closed-domain scale}
\label{jmsj:closed-domains}
The common run-length criterion does not mean that all closed summability
domains coincide. We obtain a stronger assertion than pairwise strict
inclusion: the scale has complemented jumps from both sides. For an
infinite $M\subset\mathbb N$ and positive exponents, write
\[
 F_{p,M}=\overline{G_p(y|M)}^{\,D},\qquad
 F_{<q,M}=\overline{\bigcup_{0<p<q}G_p(y|M)}^{\,D},\qquad
 F_{q+,M}=\bigcap_{p>q}F_{p,M}.
\]
The union is a linear subspace, since the domains increase with $p$.
Moreover,
\[
 F_{<q,M}=\overline{\bigcup_{0<p<q}F_{p,M}}^{\,D},\qquad
 F_{<q,M}\subset F_{q,M}\subset F_{q+,M}.
\]
The notation $q+$ means intersection of the higher closed domains;
it does not assert that this intersection equals $F_{q,M}$.
For $M=\mathbb N$ we write $F_p$, $F_{<q}$ and $F_{q+}$.

\begin{theorem}\label{jmsj:strict-scale}
For every infinite $M\subset\mathbb N$ and $0<p\leq1$,
\begin{equation}\label{jmsj:low-exponents}
 F_{p,M}=F_{1,M}=\overline{D_{\mathrm{fin}}(M)}^{\,D}.
\end{equation}
For $M=\mathbb N$ this common space is $c$, the space of convergent sequences.
If $L(M)=\infty$, then for every $1<q<\infty$ there are a linear isometry
$J_q:\ell_\infty\to D$ and a contraction $T_q:D\to\ell_\infty$ such that
\begin{equation}\label{jmsj:endpoint-splitting}
 J_q(\ell_\infty)\subset G_q(y|M),\qquad
 T_q(F_{<q,M})=\{0\},\qquad T_qJ_q=I_{\ell_\infty}.
\end{equation}
Consequently $F_{q,M}/F_{<q,M}$ contains a norm-one complemented
isometric copy of $\ell_\infty$, and
\begin{equation}\label{jmsj:endpoint-distance}
 \operatorname{dist}(J_qt,F_{<q,M})=\|t\|_\infty
 \quad(t\in\ell_\infty).
\end{equation}
The same conclusions hold with $F_{<q,M}$ replaced by $F_{p,M}$ for any
$0<p<q$. In particular, for $1<p<q<\infty$,
\[
 F_{1,M}\subsetneq F_{p,M}\subsetneq F_{<q,M}
 \subsetneq F_{q,M}\subsetneq D.
\]
There is also a right jump. If $L(M)=\infty$, then for every $q\ge1$
there are a linear isometry $J_q^+:\ell_\infty\to D$ and a contraction
$T_q^+:D\to\ell_\infty$ such that
\begin{equation}\label{jmsj:right-splitting}
 J_q^+(\ell_\infty)\subset\bigcap_{p>q}G_p(y|M),\qquad
 T_q^+(F_{q,M})=\{0\},\qquad T_q^+J_q^+=I_{\ell_\infty}.
\end{equation}
Consequently $F_{q+,M}/F_{q,M}$ contains a norm-one complemented
isometric copy of $\ell_\infty$, and
\begin{equation}\label{jmsj:right-distance}
 \operatorname{dist}(J_q^+t,F_{q,M})=\|t\|_\infty
 \quad(t\in\ell_\infty).
\end{equation}
All the quotient spaces asserted to contain $\ell_\infty$, as well as
$D/F_{q,M}$ for finite $q>0$, have density character $\mathfrak c$.
\end{theorem}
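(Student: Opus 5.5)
We treat the three groups of assertions separately, reusing the oscillation packages of Lemma~\ref{jmsj:oscillations} with different parameters for the two jumps.

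\emph{Low exponents.} First we show $F_{1,M}\subset\overline{D_{\mathrm{fin}}(M)}$; the reverse inclusion and $F_{p,M}\subset F_{1,M}$ are trivial. Given $s\in G_1(y|M)$, we truncate the selected increments: for large $N$ we replace $\Delta s_n$ by $0$ for $n\in M$, $n>N$, keeping the other increments. The resulting sequence differs from $s$ uniformly by at most $2\sum_{n\in M,n>N}|y_n(s)|\to0$. Since its increments still tend to zero, it lies in $D_{\mathrm{fin}}(M)$. For the inclusion $D_{\mathrm{fin}}(M)\subset G_p(y|M)$ when $p\le1$, note that finite support gives membership in every $\ell_p$. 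Hence $F_{p,M}$ equals the same closure for all $p\le1$. For $M=\N$, an element of $D_{\mathrm{fin}}(\N)$ is eventually constant, so the closure is contained in $c$. Conversely, eventually constant sequences are dense in $c$, and every element of $c$ lies in $D$.

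\emph{Left jump at $q>1$.} We take $r_j=1$, so each package is a single up/down oscillation of length $m_j$ with increments about $2/m_j$. The selected evaluation of $J_qt$ then has $\ell_q$-norm over package $j$ of order $m_j^{1/q-1}\|t\|_\infty$. We therefore need $\sum_j m_j^{1-q}<\infty$, which holds for $m_j=2^j$; this gives $J_q(\ell_\infty)\subset G_q(y|M)$. For $p<q$, \eqref{jmsj:tau-p} with $r_j=1$ gives $|\tau_j(s)|\le m_j^{1-1/p}\|\cdot\|_p$, which does not decay. This is the main obstacle. To fix it we use $r_j$ large but controlled: we choose $m_j,r_j$ with $m_j^{1-1/p}r_j^{-1/p}\to0$ for every $p<q$ while $r_j m_j^{1-q}$ stays summable. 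A workable choice is $r_j=\lfloor m_j^{q-1}/j^2\rfloor$. Then $\sum_j r_jm_j\cdot m_j^{-q}\le\sum j^{-2}<\infty$, so $J_qt\in G_q$. For $p<q$ we get $m_j^{1-1/p}r_j^{-1/p}\approx m_j^{(p-q)/p}j^{2/p}\to0$ when $m_j$ grows fast, say $m_j=2^{j}$ (for $p\le1$ one uses \eqref{jmsj:low-exponents}, or the same estimate at $p=1$).

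With these parameters we build $J_q,T_q$ as in the lemma. Then $\tau_j(s)\to0$ on every $G_p(y|M)$ with $p<q$, so $T_q$ kills their union and, by continuity, $F_{<q,M}$. The distance formula and the complemented copy in the quotient follow exactly as in the proof of Theorem~\ref{jmsj:common-projection}, applied inside $F_{q,M}\supset J_q\ell_\infty$. Replacing $F_{<q,M}$ by $F_{p,M}\subset F_{<q,M}$ is immediate. The strict chain then follows by applying the left jump at $q$ and at an intermediate exponent in $(p,q)$, together with the right-hand proper inclusion $F_{q,M}\ne D$ from Corollary~\ref{jmsj:ideal}.

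\emph{Right jump at $q\ge1$.} We want $J^+_q t\in G_p$ for all $p>q$ while $\tau_j\to0$ on $G_q$. We take $r_j=\lfloor m_j^{q-1}j^{2}\rfloor$, with $m_j=2^{j}$ (or $r_j=j^2$ when $q=1$). Then $\sum_j r_jm_j^{1-p}$ converges for $p>q$, and $m_j^{1-1/q}r_j^{-1/q}\approx j^{-2/q}\to0$. So $T^+_q$ annihilates $G_q$, hence $F_{q,M}$, while $J^+_q\ell_\infty\subset\bigcap_{p>q}G_p\subset F_{q+,M}$. The remaining conclusions follow as before.

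Finally, the density character $\mathfrak c$ comes from the isometric $\ell_\infty$ copies and $|D|\le\mathfrak c$. For $D/F_{q,M}$, we use Theorem~\ref{jmsj:common-projection} with $\mathscr A=\{\ell_q\}$ when $q\ge1$, and the common-projection argument with $\ell_{q}$ quasi-Banach when $q<1$.
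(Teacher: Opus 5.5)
Your proposal is correct and follows the paper's proof: the same truncation of selected increments gives the low-exponent identity, and Lemma~\ref{jmsj:oscillations} is used with $r_j\approx m_j^{q-1}$ times a summable factor for the left jump and a divergent factor for the right jump. The paper takes $m_j=2^{j^2}$ with $r_j=\lceil 2^{-j}m_j^{q-1}\rceil$ and $r_j=\lceil j\,m_j^{q-1}\rceil$; your choice $m_j=2^j$ with factors $j^{-2}$ and $j^{2}$ works equally well, with one small repair and one omission. For $q$ close to $1$, $\lfloor 2^{j(q-1)}/j^2\rfloor$ vanishes for many initial $j$, whereas the lemma needs $r_j\ge1$; use a ceiling or $\max\{1,\cdot\}$, which only adds the summable term $\sum_j 2^{-j(q-1)}$ to the energy bound. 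Also, the first strict inclusion $F_{1,M}\subsetneq F_{p,M}$ in the chain comes from the left jump at $p$ itself, which you did not state.
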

\begin{proof}
We first prove \eqref{jmsj:low-exponents}. Take $s\in G_1(y|M)$ and
$N\geq1$. Define
\[
 d_n^{(N)}=\sum_{\substack{k\in M\\N<k\leq n}}\Delta s_k,
 \qquad s_n^{(N)}=s_n-d_n^{(N)}.
\]
The selected increments are absolutely summable, so the sequence
$d^{(N)}$ converges and
\[
 \|s-s^{(N)}\|_\infty
 \leq\sum_{\substack{k\in M\\k>N}}|\Delta s_k|
 \longrightarrow0.
\]
Furthermore $s^{(N)}\in D$, and all its selected increments past $N$
vanish. Thus $s^{(N)}\in D_{\mathrm{fin}}(M)$. For $0<p\leq1$,
\[
 D_{\mathrm{fin}}(M)\subset G_p(y|M)\subset G_1(y|M),
\]
which proves equality of the closures. When $M=\mathbb N$, absolute
summability of all increments implies convergence, while
$D_{\mathrm{fin}}(\mathbb N)$ is exactly the eventually constant sequences.
Their uniform closure is $c$.

Now assume $L(M)=\infty$ and fix $q>1$. In
Lemma~\ref{jmsj:oscillations} use
\begin{equation}\label{jmsj:endpoint-parameters}
 m_j=2^{j^2},\qquad
 r_j=\left\lceil 2^{-j}m_j^{q-1}\right\rceil.
\end{equation}
Let $J_q,T_q$ and $\tau_j$ be its operators and endpoint averages.
For every fixed $1\leq p<q$,
\begin{equation}\label{jmsj:strict-decay}
 m_j^{1-1/p}r_j^{-1/p}
 \leq 2^{j/p}m_j^{(p-q)/p}
 =2^{(j-(q-p)j^2)/p}\longrightarrow0.
\end{equation}
Hence $s\in G_p(y|M)$ implies $\tau_j(s)\to0$ by
\eqref{jmsj:tau-p}, and therefore $T_qs=0$.
For $0<p<1$ use $G_p(y|M)\subset G_1(y|M)$, since $1<q$.
The same $T_q$ therefore vanishes on every domain with exponent below $q$,
and continuity implies $T_q(F_{<q,M})=\{0\}$.
There is no countability restriction on the exponents here: the explicit
parameters \eqref{jmsj:endpoint-parameters} work for every fixed $p<q$.
Uniform convergence as $p$ approaches $q$ is neither asserted nor needed:
the same bounded operator vanishes on each domain, and hence on the
closure of their union.

For the positive assertion, compute the $q$-energy of one amplitude-one
package $h_j$. The first and last slopes have absolute value $1/m_j$ on
$m_j$ edges each, while its $r_j$ rises and $r_j$ falls have absolute
value $2/m_j$ on $m_j$ edges each. Consequently
\begin{equation}\label{jmsj:energy}
 \sum_n|\Delta h_j(n)|^q
 =\bigl(2+2^{q+1}r_j\bigr)m_j^{1-q}.
\end{equation}
Since $r_j\leq 2^{-j}m_j^{q-1}+1$,
\[
 \bigl(2+2^{q+1}r_j\bigr)m_j^{1-q}
 \leq (2+2^{q+1})2^{-(q-1)j^2}+2^{q+1-j}.
\]
This bound is summable over $j$. The increment supports of the packages
are disjoint and lie in $M\setminus\{1\}$. For every $t\in\ell_\infty$,
\[
 \sum_n|\Delta(J_qt)_n|^q
 \leq\|t\|_\infty^q
       \sum_{j\geq1}\bigl(2+2^{q+1}r_j\bigr)m_j^{1-q}
 <\infty.
\]
Thus $J_qt\in G_q(y|M)$ for all bounded $t$, not just for finitely
supported coefficient sequences. This proves
\eqref{jmsj:endpoint-splitting}.

For $u\in F_{<q,M}$, contractivity gives
\[
 \|J_qt-u\|_\infty
 \geq\|T_q(J_qt-u)\|_\infty=\|t\|_\infty.
\]
The opposite inequality follows by taking $u=0$, proving
\eqref{jmsj:endpoint-distance}. Let
$Q:F_{q,M}\to F_{q,M}/F_{<q,M}$ be the quotient map. Then
\[
 \widetilde J_q=QJ_q,\qquad
 \widetilde T_q(Qs)=T_qs
\]
are well defined, $\widetilde J_q$ is an isometry, and the quotient-norm
argument used in Theorem~\ref{jmsj:common-projection} gives
$\|\widetilde T_q\|\leq1$. Moreover,
$\widetilde T_q\widetilde J_q=I$. Their product
$\widetilde J_q\widetilde T_q$ is the required norm-one projection.
Since $F_{p,M}\subset F_{<q,M}$ for every $p<q$, exactly the same argument
applies to the quotient by $F_{p,M}$. Choosing an intermediate exponent
$r$ with $p<r<q$ shows that $F_{p,M}\subsetneq F_{<q,M}$ whenever
$1\leq p<q$, because $F_{p,M}\subsetneq F_{r,M}\subset F_{<q,M}$.

For the right jump, fix $q\ge1$ and apply Lemma~\ref{jmsj:oscillations}
with the different parameters
\begin{equation}\label{jmsj:right-parameters}
 m_j=2^{j^2},\qquad r_j=\left\lceil j m_j^{q-1}\right\rceil.
\end{equation}
Denote the resulting operators by $J_q^+,T_q^+$ and the averages by
$\tau_j^+$. At exponent $q$, estimate \eqref{jmsj:tau-p} gives
\[
 |\tau_j^+(s)|\le j^{-1/q}\|(y_n(s))_{n\in M}\|_q\longrightarrow0
 \quad(s\in G_q(y|M)).
\]
Thus $T_q^+$ vanishes on $G_q(y|M)$ and, by continuity, on $F_{q,M}$.
This includes $q=1$, when $r_j=j$.

For any fixed $p>q$, the same calculation as in \eqref{jmsj:energy}
at exponent $p$, and the bound $r_j\le j m_j^{q-1}+1$, yield
\begin{equation}\label{jmsj:right-energy}
 \sum_n|\Delta h_j(n)|^p
 \le (2+2^{p+1})2^{-(p-1)j^2}
       +2^{p+1}j\,2^{-(p-q)j^2}.
\end{equation}
Both series in $j$ converge, since $p>q\ge1$. The disjoint increment
supports therefore give $J_q^+t\in G_p(y|M)$ for every
$t\in\ell_\infty$. The parameters are independent of $p$, so the same
range lies in $\bigcap_{p>q}G_p(y|M)\subset F_{q+,M}$. Together with
$T_q^+J_q^+=I$ this proves \eqref{jmsj:right-splitting}.
For $u\in F_{q,M}$,
\[
 \|J_q^+t-u\|_\infty\ge\|T_q^+(J_q^+t-u)\|_\infty=\|t\|_\infty;
\]
taking $u=0$ proves \eqref{jmsj:right-distance}. Apply the same induced-map
construction, now to the quotient $F_{q+,M}/F_{q,M}$, to obtain the
norm-one projection.

Finally $F_{q,M}\ne D$ and the assertion about $D/F_{q,M}$, for every
finite $q>0$, follow from Theorem~\ref{jmsj:common-projection} for the
singleton family $\{\ell_q\}$. The density lower bounds follow from the
embedded copies of $\ell_\infty$, and the upper bounds from
$|D|\leq\mathfrak c$.
\end{proof}

\begin{corollary}\label{jmsj:full-scale}
For the full sequence $M=\mathbb N$, one has
\[
 F_p=c\quad(0<p\leq1),\qquad
 c\subsetneq F_p\subsetneq F_q\subsetneq D
 \quad(1<p<q<\infty).
\]
For every $q>1$, both quotients
\[
 F_q/F_{<q}\qquad\text{and}\qquad F_{q+}/F_q
\]
contain norm-one complemented isometric copies of $\ell_\infty$.
At exponent one, $F_{1+}/c$ also contains such a copy.
In particular,
\[
 F_{<q}\subsetneq F_q\subsetneq F_{q+}\quad(q>1),\qquad
 c=F_1\subsetneq F_{1+}.
\]
For $0<q<1$, both one-sided spaces equal $F_q=c$.
Thus the increasing scale is neither left nor right continuous at any
exponent above one, and is not right continuous at exponent one.
\end{corollary}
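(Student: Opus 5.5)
This corollary is the case $M=\mathbb N$ of Theorem~\ref{jmsj:strict-scale}, together with the trivial observation that $L(\mathbb N)=\infty$. I would proceed in four short steps.

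First, for $0<p\le1$, \eqref{jmsj:low-exponents} gives $F_p=F_1=\overline{D_{\mathrm{fin}}(\mathbb N)}=c$. This is exactly the identification already recorded in Theorem~\ref{jmsj:strict-scale}: eventually constant sequences are dense in $c$, and summable increments force convergence.

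Second, I handle $1<p<q<\infty$. Theorem~\ref{jmsj:strict-scale} applied with $M=\mathbb N$ gives the chain
\[
 F_1\subsetneq F_p\subsetneq F_{<q}\subsetneq F_q\subsetneq D .
\]
Since $F_1=c$, this yields $c\subsetneq F_p\subsetneq F_q\subsetneq D$.

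Third, I treat the two quotients.
\begin{itemize}
\item For $q>1$, the left splitting \eqref{jmsj:endpoint-splitting} gives a norm-one complemented isometric copy of $\ell_\infty$ in $F_q/F_{<q}$.
\item For every $q\ge1$, the right splitting \eqref{jmsj:right-splitting} gives such a copy in $F_{q+}/F_q$.
\item At $q=1$ the right splitting gives a copy in $F_{1+}/F_1=F_{1+}/c$.
\end{itemize}
A quotient containing a copy of $\ell_\infty$ is nonzero, so all three inclusions are strict.

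Fourth, for $0<q<1$, every $F_p$ with $p<q$ and every $F_p$ with $q<p\le1$ equals $c$. Hence $F_{<q}$, which is the closure of a union of copies of $c$, equals $c$. The right-hand space $F_{q+}=\bigcap_{p>q}F_p$ is contained in $F_p=c$ for any $p\in(q,1]$, and it contains $F_q=c$, so $F_{q+}=c$ as well. The continuity statements are then just a restatement of these strict inclusions.

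No step is a real obstacle. The only point that needs care is the $0<q<1$ intersection: one must notice that exponents in $(q,1]$ already pin $F_{q+}$ down to $c$, so the larger exponents $p>1$ do not enlarge the intersection.
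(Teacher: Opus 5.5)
Your proposal is correct and follows the paper's own route: specialize Theorem~\ref{jmsj:strict-scale} to $M=\mathbb N$ (where $L(\mathbb N)=\infty$) and read off the chain, the two-sided jumps including the right jump at $q=1$, and the identifications with $c$. For $0<q<1$ you settle $F_{q+}$ exactly as the paper does, by noting that the intersection already involves an exponent in $(q,1]$ whose closed domain is $c$.
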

\begin{proof}
Apply Theorem~\ref{jmsj:strict-scale} with $M=\mathbb N$. For $0<q<1$,
all lower domains have closure $c$, and the intersection over higher
exponents contains $c$ and includes an exponent in $(q,1]$, whose closed
domain is $c$.
\end{proof}

\begin{remark}
The two-sided conclusions hold for every selection with unbounded runs.
The operators necessarily depend on the threshold: even allowing the
right inverses to vary, a single left-limit operator $T$ cannot work at
two thresholds $1<q<r$, because
$J_q\ell_\infty\subset G_q(y|M)\subset F_{<r,M}$ would give
$TJ_q=0$ instead of the identity. Likewise, a single right-limit operator
cannot work at two thresholds $1\le q<r$, because
$J_q^+\ell_\infty\subset G_r(y|M)\subset F_{r,M}$.
This does not conflict with Theorem~\ref{jmsj:common-projection}, whose
right inverse is not required to belong to any of the annihilated domains.
These results concern the geometry of the closed domains of a fixed
sequence, not the classification of basic spectrum triples.
\end{remark}

For a concrete comparison, let $M=\{1,3,5,\ldots\}$ and $A=\ell_2$.
Every selected run has length one. Keeping $s$ unchanged up to a cutoff
and replacing each sufficiently late $s_{2k-1}$ by $s_{2k-2}$ makes all
but finitely many selected increments zero. The uniform error tends to
zero because $\Delta s_n\to0$; the reconnecting increments also tend to
zero. Thus $G_2(y|M)$ is dense, and the same approximants work for every
ideal containing $c_{00}$.
By contrast, $M=\N$ has unbounded runs: $G_2(y)$ is not dense and
$D/F_2$ contains a $1$-complemented isometric copy of $\ell_\infty$ by
Theorem~\ref{jmsj:common-projection}. More generally the full sequence has
$\cP(y)=\varnothing$, whereas its odd subsequence is good for every
positive exponent. Hence $\cS(y)=(0,\infty)$.

\section{Sharp perturbations for symmetric linear sequence ideals}
\label{sec:ideal-perturb}
In this section $A$ is a solid, permutation-invariant linear subspace of
$\mathbb R^{\mathbb N}$ satisfying
\begin{equation}\label{ideal:algebraic-assumptions}
 c_{00}\subset A\subsetneq c_0.
\end{equation}
Solidity means that $a\in A$ and $|b_n|\leq|a_n|$ for all $n$ imply $b\in A$.
Permutation invariance is understood for genuine bijections of $\mathbb N$.
In particular, bounded coordinatewise multiplication preserves membership
in $A$. No norm, completeness or local convexity is assumed. For bounded
$y\subset E^*$ put $G_A(y)=\{x\in E:(y_n(x))\in A\}$.

\begin{lemma}\label{ideal:minorant}
If $\varepsilon=(\varepsilon_n)\geq0$ and $\varepsilon\notin A$, then there
is $a\in c_0\setminus A$ with $0\leq a_n\leq\varepsilon_n$ such that both
$\operatorname{supp}a$ and its complement are infinite.
\end{lemma}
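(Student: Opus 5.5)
The plan is to mimic the minorant construction \eqref{pert:minorant}, replacing divergence of an $\ell_s$-series by the algebraic non-membership in $A$. First reduce to a bounded minorant. Put $b_n=\min\{\varepsilon_n,1\}$. If $b\in A$, then the set where $\varepsilon_n>1$ must be finite: otherwise solidity and permutation invariance would put the indicator of an infinite set in $A$. That indicator lies outside $c_0$, contradicting $A\subset c_0$. Since $c_{00}\subset A$ and $A$ is linear, adding finitely many coordinates would then give $\varepsilon\in A$, which is false. Hence $b\notin A$.

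Next force membership in $c_0$. For each $k$ put $b^{(k)}=\min\{b,1/k\}$. I claim $b^{(k)}\notin A$ for every $k$. Indeed, $b-b^{(k)}$ is supported on $\{b_n>1/k\}$. If that set is finite, then $b-b^{(k)}\in c_{00}\subset A$. If it is infinite, then $b\ge 1/k$ there; an indicator of an infinite set is not in $A$, so neither is $b\,\mathbf 1_{\{b>1/k\}}$ by solidity (it dominates $k^{-1}$ times the indicator). In that case $b\notin A$ is witnessed on that set itself. I will handle the two situations separately.

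\textbf{Case 1: $\{b>1/k\}$ is infinite for some $k$.} Take any infinite, coinfinite subset $S$ of that set, and put $a_n=(1/j)\,\mathbf 1_{S}(n)$ on a suitably sparse enumeration. One cannot use a constant value, since that is not in $c_0$. Instead I will use the following lemma: if $c\notin A$ for the indicator $c$ of an infinite set $S$, then some null minorant supported in $S$ is also not in $A$.

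To build it, partition $S$ into infinitely many infinite pieces $S_1,S_2,\dots$, and let $a=\sum_j j^{-1}\mathbf 1_{S_j\cap[N_j,\infty)}$ with $N_j$ increasing, so that $a\in c_0$. If $a\in A$, then solidity gives $j^{-1}\mathbf 1_{T_j}\in A$, hence $\mathbf 1_{T_j}\in A$ for the infinite sets $T_j=S_j\cap[N_j,\infty)$, and by permutation invariance $\mathbf 1_S\in A$. This is a contradiction, and the sets $T_j$ are exactly what transfers it.

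This also shows that indicators of infinite sets are never in $A$, because $A\subsetneq c_0$. So Case 1 reduces to the principle that any nonnegative sequence bounded below on an infinite set is outside $A$. Coinfiniteness is arranged by discarding every other element of the support. Discarding is harmless, since the remaining part still contains an infinite set on which the sequence is bounded below.

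\textbf{Case 2: $b\in c_0$.} Here $b$ is already null and $b\notin A$. The difficulty is making the complement of the support infinite. Split the support of $b$ into even-position and odd-position halves $P,Q$, so that $b=b\mathbf 1_P+b\mathbf 1_Q$. By linearity, at least one of the two pieces is not in $A$; take that piece as $a$. Its support is contained in $P$ (or $Q$), so the complement contains the other half, which is infinite provided $\operatorname{supp}b$ is infinite. The support is indeed infinite, since otherwise $b\in c_{00}\subset A$. Then $0\le a\le b\le\varepsilon$, $a\in c_0\setminus A$, and both $\operatorname{supp}a$ and its complement are infinite.

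The main obstacle is Case 1. There one must produce a null minorant outside $A$ from a sequence that is merely bounded below on an infinite set. The key tool is the partition into infinitely many infinite pieces combined with genuine permutation invariance, and this is the step I would check most carefully.
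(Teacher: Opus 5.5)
Your reduction to $b=\min\{\varepsilon,1\}\notin A$ is correct. Your Case~2 is correct and is the paper's argument for $\varepsilon\in c_0$: split into two infinite halves, at least one half lies outside $A$ by linearity, and its support is infinite because $c_{00}\subset A$.

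\textbf{Case~1 has a genuine gap.} The sequence $a=\sum_j j^{-1}\mathbf 1_{T_j}$ with $T_j=S_j\cap[N_j,\infty)$ is not null. Each $T_j$ is infinite, so $a$ equals $1$ on all of $T_1$; no choice of the $N_j$ changes this while the $T_j$ stay infinite. If you shrink the pieces to finite sets so that $a\in c_0$, then $\mathbf 1_{T_j}\in c_{00}\subset A$ and your contradiction disappears.

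The deeper problem is that Case~1 uses only solidity, permutation invariance and $A\subset c_0$. It never uses $A\neq c_0$, and no such argument can work: if $A$ were all of $c_0$, no null sequence would lie outside $A$. The fact that indicators of infinite sets are not in $A$ carries no information, because they are not in $c_0$ either. The same applies to your claim that $b^{(k)}\notin A$ when $\{b>1/k\}$ is infinite: it is true only because $b^{(k)}\notin c_0$, so it does not help.

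\textbf{The missing idea, which is the paper's, is to import a witness of $A\subsetneq c_0$.}
\begin{enumerate}
\item Take $c\in c_0\setminus A$. Replace it by $|c|$, which is still outside $A$ by solidity, and scale it so that $\|c\|_\infty\le 1/k$.
\item Split $c$ as in your Case~2, so that its support $T$ and $T^c$ are both infinite.
\item Choose an infinite $L\subset\{b>1/k\}$ with $L^c$ infinite. Let $\pi$ be a genuine permutation of $\mathbb N$ carrying $T$ onto $L$ and $T^c$ onto $L^c$.
\item Set $a_{\pi(n)}=c_n$. Then $a\in c_0\setminus A$ by permutation invariance, and $\operatorname{supp}a=L$ and its complement are both infinite. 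Finally $0\le a_n\le 1/k<b_n\le\varepsilon_n$ on $L$, and $a=0$ elsewhere.
\end{enumerate}
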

\begin{proof}
Suppose first that $\varepsilon\in c_0$. Partition $\mathbb N=P\sqcup Q$
into infinite sets. At least one of the zero-padded restrictions
$\varepsilon\mathbf1_P$, $\varepsilon\mathbf1_Q$ is outside $A$, since their
sum is $\varepsilon$. Choose it as $a$. Its support is infinite because
$c_{00}\subset A$, and its complement contains the other infinite half.

If $\varepsilon\notin c_0$, choose $\delta>0$ and an infinite $K\subset\mathbb N$
such that $\varepsilon_n\geq\delta$ for $n\in K$. Take $b\in c_0\setminus A$,
replace it by $|b|$, and scale it to have $\|b\|_\infty\leq\delta$.
The replacement preserves nonmembership by solidity. Splitting $b$ over
two infinite sets as above produces $c\in c_0\setminus A$ whose support
$T$ and complement $T^c$ are infinite. Choose an infinite $L\subset K$
with $L^c$ infinite. Bijections $T\to L$ and $T^c\to L^c$ together define
a permutation $\pi$ of $\mathbb N$. Set $a_{\pi(n)}=c_n$. Then $a\notin A$,
$a\in c_0$, and $a_n=0$ outside $L$, while $0\leq a_n\leq\delta\leq
\varepsilon_n$ on $L$.
\end{proof}

\begin{theorem}\label{ideal:perturb}
Let $A$ satisfy \eqref{ideal:algebraic-assumptions} and let $\varepsilon\geq0$ be
arbitrary. The following are equivalent:
\begin{enumerate}[label=(\roman*)]
\item $\varepsilon\in A$.
\item For every Banach space $E$ and all bounded sequences $y,z\subset E^*$
with $\|y_n-z_n\|\leq\varepsilon_n$, one has $G_A(y)=G_A(z)$.
\item For every Banach space $E$ and all normalized normal sequences
$y,z\subset E^*$ with $\|y_n-z_n\|\leq\varepsilon_n$, the density of
$G_A(y)$ is equivalent to that of $G_A(z)$.
\end{enumerate}
If $\varepsilon\notin A$, the failure in (iii) can be witnessed by normalized
weakly null sequences in a separable Hilbert space: one domain is dense and
the closure of the other is a hyperplane.
\end{theorem}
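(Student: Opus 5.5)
The plan is to prove (i)$\Rightarrow$(ii)$\Rightarrow$(iii) directly and then establish $\neg$(i)$\Rightarrow\neg$(iii) through the paired-coordinate Hilbert model from the proof of Theorem~\ref{pert:thresholds}(i), with Lemma~\ref{ideal:minorant} supplying the minorant.

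For (i)$\Rightarrow$(ii), fix $x\in E$. Then $|(y_n-z_n)(x)|\le\varepsilon_n\|x\|$, and $\varepsilon\|x\|\in A$ because $A$ is a linear space. Solidity therefore puts the difference sequence $((y_n-z_n)(x))$ in $A$, and linearity of $A$ gives $(y_n(x))\in A$ exactly when $(z_n(x))\in A$. This is the argument of Lemma~\ref{pre:perturb}, with $\ell_q$ replaced by $A$; it uses only solidity and linearity, with no norm. The implication (ii)$\Rightarrow$(iii) is immediate.

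For the converse, suppose $\varepsilon\notin A$. Lemma~\ref{ideal:minorant} gives $a\in c_0\setminus A$ with $0\le a_n\le\varepsilon_n$, where both $S=\operatorname{supp}a$ and $S^c$ are infinite. I would mimic the paired construction, indexing by $\N$ itself so that the error budget is coordinatewise. Enumerate $S=\{s_1<s_2<\cdots\}$ and $S^c=\{t_1<t_2<\cdots\}$. In $H=\ell_2(\N_0)$ put $y_{s_k}=z_{s_k}$ equal to the normalization of $e_k+a_{s_k}e_0$ for $z$, and $y_{s_k}=e_k$ for $y$; also set $y_{t_k}=z_{t_k}=e_k$.

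The error at $s_k$ is at most $a_{s_k}\le\varepsilon_{s_k}$, by the computation in Theorem~\ref{pert:thresholds}, and the error is zero elsewhere. Both sequences are normalized and weakly null because $a\in c_0$.

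For $x=(x_0,b)$, the $t_k$-coordinates give $(b_k)$ placed on $S^c$, and the $s_k$-coordinates give $(b_k+x_0a_{s_k})/\sqrt{1+a_{s_k}^2}$ placed on $S$. Using bounded multipliers (hence membership is preserved by solidity) together with permutation invariance, I expect to show that $x\in G_A(z)$ if and only if $b\in A$ (transported) and $x_0a\in A$. The latter holds if and only if $x_0=0$, since $a\notin A$ and $A$ is linear. On the other side, $G_A(y)=\R e_0\oplus(\ell_2\cap A)$, which is dense because it contains $c_{00}$. Thus $G_A(z)=\{0\}\oplus(\ell_2\cap A)$, whose closure is the hyperplane $e_0^\perp$.

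The main obstacle is the transport between index sets. I have to say precisely that "$b$ reindexed along $S^c$ is in $A$" is the same as "$b$ reindexed along $S$ is in $A$." Both sets are infinite with infinite complements, so a genuine permutation of $\N$ maps one placement to the other. Here I must use that $A\subset c_0$ and zero-padding is harmless: any bijection $S\to S^c$ extends to a permutation swapping $S$ and $S^c$. Solidity then lets me split a sequence into its restrictions to $S$ and $S^c$ and recombine them.
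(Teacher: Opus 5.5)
Your proposal is correct and follows the paper's proof essentially step for step. You use solidity plus linearity for (i)$\Rightarrow$(ii), then the minorant from Lemma~\ref{ideal:minorant} and the paired Hilbert-space model, and finally the permutation swapping $s_k\leftrightarrow t_k$ together with a bounded multiplier to isolate $x_0a\in A$ and force $x_0=0$. The exact identification of the $b$-part as a ``transported'' $\ell_2\cap A$ is not needed: for the hyperplane conclusion it suffices that $\{0\}\oplus c_{00}\subset G_A(z)\subset\{0\}\oplus\ell_2$, and this is what the paper verifies.
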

\begin{proof}
If $\varepsilon\in A$ and $x\in E$, solidity gives
$((y_n-z_n)(x))\in A$ because
$|(y_n-z_n)(x)|\leq\varepsilon_n\|x\|$. Linearity then gives equality of
the domains. Thus (i) implies (ii), and (ii) implies (iii).

For the converse take the sequence $a$ supplied by Lemma~\ref{ideal:minorant}.
Enumerate its support and complement as $\{j_k:k\geq1\}$ and $\{i_k:k\geq1\}$.
They exhaust $\mathbb N$; no leftover coordinates have to be treated.
In $H=\mathbb R e_0\oplus_2\ell_2$, identified with its dual, put
\[
 y_{i_k}=y_{j_k}=e_k,\qquad
 z_{i_k}=e_k,\qquad
 z_{j_k}=\frac{e_k+a_{j_k}e_0}{\sqrt{1+a_{j_k}^2}}.
\]
The sequences are normalized and weakly null: each $e_k$ occurs at only two
indices, and $a_n\to0$. The error is zero at $i_k$, and at $j_k$ it satisfies
\begin{align*}
 \|z_{j_k}-y_{j_k}\|^2
 &=2-\frac{2}{\sqrt{1+a_{j_k}^2}}\\
 &=\frac{2a_{j_k}^2}
 {\sqrt{1+a_{j_k}^2}(\sqrt{1+a_{j_k}^2}+1)}
 \leq a_{j_k}^2\leq\varepsilon_{j_k}^2.
\end{align*}
For $y$, the dense subspace $\mathbb R\oplus c_{00}$ is contained in $G_A(y)$.

Now take $x=(t,b)\in G_A(z)$ and set $d=(z_n(x))\in A$.
Masking $d$ to the $i_k$ coordinates gives $u\in A$ with $u_{i_k}=b_k$
and $u_{j_k}=0$. The permutation swapping each pair $(i_k,j_k)$ gives
$v\in A$ with $v_{j_k}=b_k$ and $v_{i_k}=0$. Multiply $d$ by the bounded
sequence which equals $1$ at $i_k$ and $\sqrt{1+a_{j_k}^2}$ at $j_k$.
Subtracting $u+v$ leaves exactly $ta$. Hence $ta\in A$, and $a\notin A$
forces $t=0$. Conversely, $(0,b)\in G_A(z)$ for every finitely supported $b$.
We have proved
\[
 \overline{G_A(y)}^{\,H}=H,\qquad
 \overline{G_A(z)}^{\,H}=\{0\}\oplus\ell_2.
\]
This contradicts (iii) whenever $\varepsilon\notin A$.
\end{proof}

\begin{remark}
Only linearity, solidity, genuine coordinate permutations, and
$c_{00}\subset A\subsetneq c_0$ are used. No completeness or local convexity
is needed, and the error bounds need not tend to zero. Taking $A=\ell_p$
for any $p>0$ recovers the fixed-exponent part of
Theorem~\ref{pert:thresholds}. The earlier direct proof is retained as an explicit
paired-coordinate example illustrating the three-part sharpness theorem.
\end{remark}

\section{All admissible pairs in one fixed dual}
\label{sec:fixed}
We finish by observing that the joint classification may be realized in
one fixed dual.  K\k{a}kol \cite{kakol2026} recently constructed, for
$|\Gamma|\ge\mathfrak c$, normalized weakly null sequences in
$\ell_1(\Gamma)^*$ with no strongly normal subsequence.  The case $(\cS,\cB)=(\varnothing,\varnothing)$ of
Corollary~\ref{fixed:joint} therefore recovers, for
$\Gamma=\mathfrak c$, the existence phenomenon of \cite{kakol2026} by
an independent argument.  The result below strengthens the fixed-dual
picture in a different direction by realizing every admissible nested
pair, retaining $1$-unconditional basic structure and allowing
$\cP=\varnothing$ simultaneously.  Put $\mathfrak c=2^{\aleph_0}$ and
$U=\ell_1(\mathfrak c)$.

\begin{lemma}[A common metric quotient]\label{fixed:quotient}
Every Banach space $F$ with $|B_F|\le\mathfrak c$ is a metric quotient
of $U$.
\end{lemma}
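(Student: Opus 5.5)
The plan is to use the universal property of $\ell_1(\Gamma)$: a bounded family indexed by $\Gamma$ in the unit ball of $F$ extends uniquely to a norm-one operator on $\ell_1(\Gamma)$. Since $|B_F|\le\mathfrak c$, I first fix a surjection $\sigma\colon\mathfrak c\to B_F$. Such a map exists when $F\ne\{0\}$, because then $B_F$ contains a segment and so $|B_F|=\mathfrak c$. When $F=\{0\}$ there is nothing to prove, since the zero map is trivially a metric quotient map.

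Define $Q\colon U\to F$ by
\[
 Q\Bigl(\sum_{\gamma}c_\gamma e_\gamma\Bigr)=\sum_\gamma c_\gamma\,\sigma(\gamma).
\]
The series converges absolutely in $F$ because $\|\sigma(\gamma)\|\le1$ and $(c_\gamma)\in\ell_1(\mathfrak c)$ has countable support. Hence $Q$ is linear with $\|Q\|\le1$.

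Next I check that $Q$ maps the open unit ball of $U$ onto the open unit ball of $F$, which is the definition of a metric quotient. Take $x\in F$ with $\|x\|<1$. If $x=0$, use $0$. Otherwise, $x$ itself lies in $B_F$, so $x=\sigma(\gamma)$ for some $\gamma$, and then $Q(e_\gamma)=x$ with $\|e_\gamma\|=1$. This is not yet inside the open ball, so I rescale instead. Choose $\gamma$ with $\sigma(\gamma)=x/\|x\|\in B_F$ and set $u=\|x\|e_\gamma$. Then $Qu=x$ and $\|u\|=\|x\|<1$.

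This gives an exact lift with equal norm. Consequently the quotient norm on $U/\ker Q$ coincides with the norm of $F$, so the induced map $U/\ker Q\to F$ is an onto isometry. In particular $Q$ is onto, and $F$ is isometric to $U/\ker Q$.

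There is no real obstacle here. The only points needing care are the cardinality bookkeeping that produces the surjection $\sigma$ (including the trivial case $F=\{0\}$) and the convention that a metric quotient means $Q(\text{open ball})=\text{open ball}$. Both are settled by the exact norm-preserving lift above, so no approximation argument is required.
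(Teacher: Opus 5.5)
Your proof is correct and follows essentially the same route as the paper. Both send the unit vectors of $U=\ell_1(\mathfrak c)$ onto $B_F$, extend by absolute summability to a contraction, and lift each $f\neq0$ exactly by $\|f\|e_\gamma$, where $\gamma$ indexes $f/\|f\|$; the only difference is bookkeeping, since the paper uses an injection $\iota\colon B_F\to\mathfrak c$ and sends unused coordinates to $0$, while you use a surjection $\sigma\colon\mathfrak c\to B_F$ (which exists for any nonempty $B_F$ with $|B_F|\le\mathfrak c$, so your separate treatment of $F=\{0\}$ is harmless but unnecessary).
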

\begin{proof}
Choose an injection $\iota:B_F\to\mathfrak c$ and define
$Qe_{\iota(b)}=b$ for $b\in B_F$, and $Qe_\gamma=0$ on the remaining
coordinates.  Absolute summability extends this to a contraction
$Q:U\to F$.  For $f\ne0$, the vector
$\|f\|e_{\iota(f/\|f\|)}$ maps to $f$ and has norm $\|f\|$, so $Q$ is a
metric quotient.
\end{proof}

\begin{lemma}[Metric quotient transfer]\label{fixed:transfer}
Let $Q:E\to F$ be a metric quotient and $y=(y_n)\subset F^*$ bounded.
Then, for every $q>0$,
\[
 G_q((Q^*y_n))=Q^{-1}(G_q(y)).
\]
Consequently density of these domains, the subsequence spectrum, and,
for basic sequences, the normalized block spectrum are preserved by
$Q^*$.  Weak nullity, basicness and $1$-unconditionality are preserved as
well.
\end{lemma}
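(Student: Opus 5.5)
The whole lemma rests on the pointwise identity $(Q^*y_n)(x)=y_n(Qx)$ for $x\in E$. From it the domain formula follows at once: $x\in G_q((Q^*y_n))$ if and only if $(y_n(Qx))\in\ell_q$, that is, if and only if $Qx\in G_q(y)$. The same identity applied to a subsequence $(y_{n_k})$, or to a block $\sum_{n\in I_k}a_ny_n$, gives $Q^*$ of the corresponding selection, because $Q^*$ is linear. So every selection of $Q^*y$ is the image under $Q^*$ of the same selection of $y$, and its domain is the preimage of that selection's domain.

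\emph{Density.} I will show that for a linear subspace $G\subset F$, $Q^{-1}(G)$ is dense in $E$ exactly when $G$ is dense in $F$. Suppose $G$ is dense, and take $x\in E$ and $\varepsilon>0$. Choose $g\in G$ with $\|Qx-g\|<\varepsilon$. Because $Q$ is a metric quotient, the image of the open unit ball of $E$ is the open unit ball of $F$ (up to arbitrarily small loss), so there is $e\in E$ with $Qe=g-Qx$ and $\|e\|<\varepsilon$. Then $x+e\in Q^{-1}(G)$ and lies within $\varepsilon$ of $x$. Conversely, $Q$ is continuous and surjective and $Q(Q^{-1}(G))=G$, so density upstairs pushes forward to density of $G$.

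\emph{Spectra and structure.} Combining the density equivalence with the selection correspondence gives equality of $\cP$, $\cS$ and $\cB$. For $\cB$ one also needs normalized blocks to correspond, and this follows once $Q^*$ is an isometry. That holds because $\|Q^*f\|=\sup_{\|x\|\le1}|f(Qx)|=\|f\|$, using $Q(B_E)\supseteq$ the open unit ball of $F$.

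Since $Q^*$ is an isometric embedding $F^*\to E^*$, it preserves basicness, basis constants and $1$-unconditionality, as these are statements about norms of finite linear combinations. It preserves weak nullity because bounded operators are weak-to-weak continuous. Normalization is preserved by isometry, and weak-star nullity transfers through the pointwise identity.

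The only delicate point is the lifting step in the density argument, where one must use the metric (not merely open-mapping) character of $Q$ to obtain a small preimage. Even an ordinary quotient map would suffice there via the open mapping constant.
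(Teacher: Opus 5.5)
Your proof is correct and follows the paper's route: the pointwise identity $(Q^*y_n)(x)=y_n(Qx)$ gives the domain formula and the preimage description of every selected domain, density transfers through $Q^{-1}$, and the isometry of $Q^*$ matches normalized blocks and preserves basicness, $1$-unconditionality and weak nullity. The only cosmetic difference is in the density step: the paper uses that $Q$ maps open sets to open sets, whereas you use the explicit small-norm lifting, which is the quantitative form of the same fact.
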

\begin{proof}
The domain identity is immediate from $(Q^*y_n)(x)=y_n(Qx)$.
For $A\subset F$, if $A$ is dense, every nonempty open $V\subset E$
has nonempty open image $Q(V)$, which meets $A$; hence $Q^{-1}(A)$ is
dense. Conversely, if $Q^{-1}(A)$ is dense, the inverse image of each
nonempty open subset of $F$ meets $Q^{-1}(A)$, so $A$ is dense.
The adjoint $Q^*$ is an isometry. In particular, for every finite
coefficient family,
\[
 \Bigl\|\sum_n a_nQ^*y_n\Bigr\|_{E^*}
 =\Bigl\|\sum_n a_ny_n\Bigr\|_{F^*}.
\]
Thus the same coefficients produce normalized successive blocks on
one side exactly when they do on the other. Applying the domain
identity to these blocks proves both inclusions between the block
spectra; the subsequence assertion is immediate. Basicness and
$1$-unconditionality are preserved by the same norm identity.
Weak nullity is preserved by the bounded linear map $Q^*$; equivalently,
for $x^{**}\in E^{**}$ one has
$x^{**}(Q^*y_n)=(Q^{**}x^{**})(y_n)\to0$.
\end{proof}

\begin{corollary}[Joint selection spectra in a fixed Schur predual]
\label{fixed:joint}
Let
\[
 \mathfrak A=\{\varnothing,(0,\infty),(1,\infty)\}
 \cup\{[r,\infty),(r,\infty):r>1\}.
\]
The pairs $(\cS(y),\cB(y))$ arising from normalized Schauder basic
sequences in $U^*=\ell_\infty(\mathfrak c)$ are exactly
\[
 \{(I,J):I,J\in\mathfrak A,\ I\subseteq J\}.
\]
Every such pair has a normalized weakly null $1$-unconditional Schauder
basic realization; the realization may also be chosen with
$\cP(y)=\varnothing$.
\end{corollary}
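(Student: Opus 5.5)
The plan has two halves: necessity, which is inherited from the general classification, and realization, which pulls each construction from Theorem~\ref{joint:classification} back into $U^*$ by a metric quotient.

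\emph{Necessity.} Theorem~\ref{joint:classification} already shows that any pair $(\cS(y),\cB(y))$ of a normalized Schauder basic sequence in any dual lies in $\{(I,J):I,J\in\mathfrak A,\ I\subseteq J\}$. In particular this holds in $U^*$. Its necessity argument uses only upper-set monotonicity, the collapse at exponents $\le1$ via \eqref{pre:selection} and Lemma~\ref{pre:allp}, and the inclusion $\cS\subseteq\cB$; none of these depends on the particular dual.

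\emph{Realization.} Fix an admissible pair $(I,J)$. Theorem~\ref{joint:classification} supplies a Banach space $F$ and a normalized weakly null $1$-unconditional basic sequence $y\subset F^*$ with $\cS(y)=I$, $\cB(y)=J$ and $\cP(y)=\varnothing$. I would then pull $y$ back to $U^*$. By Lemma~\ref{fixed:quotient} there is a metric quotient $Q:U\to F$ provided $|B_F|\le\mathfrak c$. Lemma~\ref{fixed:transfer} then shows that $(Q^*y_n)\subset U^*=\ell_\infty(\mathfrak c)$ is normalized (since $Q^*$ is an isometry), weakly null, $1$-unconditional basic, and has the same $\cP$, $\cS$ and $\cB$, because every domain is the preimage of the corresponding one and density is preserved both ways.

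\emph{Cardinality check (main obstacle).} The remaining step is to verify $|B_F|\le\mathfrak c$ for every predual used in Theorem~\ref{joint:classification}. It suffices that $F$ is the dual of a separable Banach space, or a finite $\oplus_1$-sum of such spaces; then $|F|\le|\R^{\N}|=\mathfrak c$, since a functional is determined by its values on a countable dense set.
\begin{enumerate}
\item The Lorentz constructions use $E=V^*$ with $V=Z_I\oplus_\infty X$. Here $Z_I$ and $X$ have Schauder bases and are therefore separable, so $V$ is separable.
\item In Corollary~\ref{pert:separation} the predual is $Z^*\oplus_1c_0$, again the dual of a separable space plus a separable space.
\item For $I=(0,\infty)$ the orthonormal basis of $\ell_2$ lives over the separable predual $\ell_2$.
\item The interleaved construction uses $E_a\oplus_1E_b$, a finite sum of the previous kinds.
\end{enumerate}
In every case $|F|\le\mathfrak c$, so Lemma~\ref{fixed:quotient} applies. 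This completes the transfer and gives every pair, with $\cP=\varnothing$ as well.
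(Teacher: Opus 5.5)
Your proposal is correct and follows the paper's proof: necessity comes from Theorem~\ref{joint:classification}, and each realization constructed there is transferred to $U^*$ via the metric quotient of Lemma~\ref{fixed:quotient} and Lemma~\ref{fixed:transfer}, once $|B_F|\le\mathfrak c$ is checked for every ambient predual. Your case-by-case cardinality check spells out what the paper states in one sentence. One small wording point: $c_0$ is a separable space rather than the dual of a separable space, but it gives the same bound $\mathfrak c$.
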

\begin{proof}
Necessity is Theorem~\ref{joint:classification}.  For sufficiency use the
realization constructed there, say in $F^*$.  Every ambient Banach space \(F\) occurring in that construction has
\(|B_F|\le\mathfrak c\).  In the proper-\(J\) case one has
\(F=V^*\) with \(V\) separable, so \(|F|\le\mathfrak c\); the auxiliary
Hilbert, \(c_0\), Lorentz-dual, and finite direct-sum constructions in
the remaining case have the same cardinal bound.  Hence
Lemma~\ref{fixed:quotient} gives a metric
quotient $U\to F$, and Lemma~\ref{fixed:transfer} transfers the sequence
isometrically to $U^*$ while preserving all three density spectra and
its unconditional basic structure.
\end{proof}

\begin{remark}
The fixed predual $U=\ell_1(\mathfrak c)$ has the Schur property.
Thus the full selection classification already occurs in the dual of a
single familiar nonreflexive space whose weak sequential structure is
very rigid.  More generally, if $|\Gamma|\ge\mathfrak c$, the coordinate
projection $\ell_1(\Gamma)\twoheadrightarrow\ell_1(\mathfrak c)$ is a
metric quotient, so Lemma~\ref{fixed:transfer} transfers every realization
of Corollary~\ref{fixed:joint} to $\ell_1(\Gamma)^*$.

The cardinal $\mathfrak c$ is not claimed to be optimal.  By
Corollary~\ref{pre:small-density}, if $\kappa<\mathfrak b$, then every
normal sequence in $\ell_1(\kappa)^*$ has $\cS=(0,\infty)$, so no proper
subsequence spectrum can occur there.  Consequently any universal
fixed-dual realization of all admissible pairs must have
$\kappa\ge\mathfrak b$, while Corollary~\ref{fixed:joint} shows that
$\kappa=\mathfrak c$ suffices.  The intermediate range
$\mathfrak b\le\kappa<\mathfrak c$ remains an open cardinal problem;
compare \cite{kakol2026}.
\end{remark}

\section{Further questions}\label{sec:questions}
Theorem~\ref{orig:classification} classifies $\cP$ for arbitrary normal
sequences, but its Hilbert-space realizations are not basic. The basic
triple problem therefore remains distinct: determine all triples
$(\cP(y),\cS(y),\cB(y))$ for normalized normal Schauder basic sequences.
Theorem~\ref{joint:classification} realizes $(\varnothing,I,J)$ for every
admissible nested pair, Theorem~\ref{lor:classification} realizes
$(I,I,I)$, and Corollary~\ref{pert:separation} realizes
$(I,I,(0,\infty))$ for every proper admissible $I$.
Basic examples with $\varnothing\ne\cP(y)\subsetneq\cS(y)$ already
follow from the Lorentz models; their existence is not an open part of
the triple problem. Indeed, for normal sequences
$u\subset E_1^*$ and $v\subset E_2^*$ define, in
$(E_1\oplus_1E_2)^*$,
\[
 h_{2n-1}=(u_n,0),\qquad h_{2n}=(0,v_n).
\]
Then $G_p(h)=G_p(u)\times G_p(v)$, and hence
\[
 \cP(h)=\cP(u)\cap\cP(v),\qquad
 \cS(h)=\cS(u)\cup\cS(v).
\]
For the second identity, a good infinite subsequence has infinitely many
terms of at least one parity. Restricting to those terms preserves
density and gives a good subsequence in the corresponding factor;
conversely, either factor can be selected alone. If both sequences are
weakly null and $1$-unconditional basic, so is their normalized
interleaving, since its finite-sum norm is the maximum of the two
coordinate-sum norms. Taking the canonical models of
Corollary~\ref{lor:examples} with weights $n^{-1/2}$ and $n^{-1/3}$ gives
\[
 \varnothing\ne\cP(h)=(3,\infty)
 \subsetneq (2,\infty)=\cS(h).
\]
This answers the existence question, not the classification of all basic
triples; no general formula for the block spectrum of an interleaving is
asserted here. If $\cP(y)$ meets $(0,1]$, then
$\cS(y)=\cB(y)=(0,\infty)$. Without basicness,
Theorem~\ref{orig:classification} also realizes $[1,\infty)$ and
$(1/2,\infty)$ as $\cP$.

Theorem~\ref{lor:open-space} gives closed spaces of sequences with exact
open thresholds $(r,\infty)$, $r\ge1$. Analogous spaceability statements
for closed thresholds $[r,\infty)$ and for the empty spectrum remain to
be investigated. Another extension is to replace the $\ell_p$ scale by
an ordered scale of symmetric sequence ideals: which hypotheses on the
scale yield an analogue of the selection classification?

Section~\ref{sec:jmsj} settles the quasi-Banach extension of the run-length
criterion. It also gives a common complemented obstruction for every
family of fewer than $\mathfrak b$ ideals. We do not determine whether this
cardinal bound is optimal. It is also natural to seek analogous criteria
on locally finite graphs, where bounded runs would have to be replaced
by a suitable geometric condition. The complemented jumps of
Theorem~\ref{jmsj:strict-scale} concern one fixed normal sequence; they do
not settle the basic triple problem.

The least cardinal $\kappa$ for which all admissible pairs can be realized
in $\ell_1(\kappa)^*$ lies between $\mathfrak b$ and $\mathfrak c$, by
Corollaries~\ref{pre:small-density} and \ref{fixed:joint}.
When $\mathfrak b=\mathfrak c$, these bounds coincide. The intermediate
case $\mathfrak b<\mathfrak c$ is not resolved by the constructions here.
The results above are proved in ZFC. No assumption such as
$\mathfrak b=\mathfrak c$ is needed for the classification or the
fixed-dual realization; that equality only closes the stated gap
between the lower and upper cardinal bounds.

\section*{Declaration of generative AI and AI-assisted technologies in the manuscript preparation process}
During the preparation of this work, the author used OpenAI's ChatGPT for
exploratory mathematical discussion, including proposed arguments, manuscript
organization, language refinement, consistency checks, and exposition.
AI-generated suggestions and review reports are not independent mathematical
verification. The author is responsible for the mathematical content,
references, and final text.

\enlargethispage{4\baselineskip}

\end{document}